\DeclareMathOperator{\st}{s.t.}
\DeclareMathOperator{\Diag}{Diag}
\DeclareMathAlphabet{\mathcalligra}{T1}{calligra}{m}{n}
\DeclareMathOperator{\fr}{F}
\DeclareMathOperator{\proj}{proj}
\DeclareMathOperator{\vspan}{span}
\newtheorem{theorem}{Theorem}
\newtheorem{corollary}{Corollary}
\newtheorem{lemma}{Lemma}
\newtheorem{proposition}{Proposition}
\newtheorem{remark}{Remark}
\newtheorem{example}{Example}
\newcommand{\PP}{\mathbb P}
\newcommand{\SOC}{\mathcal{SOC}}
\newcommand{\bmath}{\mathbf}
\newcommand{\bm}{\boldsymbol}
\newcommand{\bs}{\boldsymbol}
\def\RR{ {\mathbb{R}}}
\newcommand{\X}{{\mathcal X}}
\newcommand{\mc}[1]{\mathcal{#1}}
\newcommand{\mb}[1]{\mathbb{#1}}
\newcommand{\xit}{\tilde{\bm{\xi}}}
\newcommand{\gammat}{\tilde{\bm{\gamma}}}
\newcommand{\x}{{\bm{x}}}
\newcommand{\indep}{\perp \!\!\! \perp}
\newcommand{\gm}{\gamma}
\newcommand{\bxi}{\bm \xi}
\newcommand{\VVrho}{{\mathbb{V}_{\bm\gamma}}}
\newcommand{\VVrhohat}{{\hat{\mathbb{V}}_{\bm\gamma}}}
\newcommand{\EE}{{\mathbb{E}}}
\newcommand{\EErho}{{\mathbb{E}_{\bm\gamma}}}
\newcommand{\EErhohat}{{\hat{\mathbb{E}}_{\bm\gamma}}}
\title{
On Data-Driven Prescriptive Analytics with Side Information:\\ A Regularized Nadaraya-Watson Approach
}
\author{
Prateek R. Srivastava\thanks{Graduate Program in Operations Research and Industrial Engineering, 
The University of Texas at Austin, Austin, TX, 78712-1591, USA. Email: {\tt \{prateekrs,yijie-wang,grani.hanasusanto\}@utexas.edu}.}
\ \ \ \ \ \ \
Yijie Wang\footnotemark[1]
\ \ \ \ \ \ \
Grani A. Hanasusanto\footnotemark[1]
\ \ \ \ \ \ \
Chin Pang Ho\thanks{School of Data Science, City University of Hong Kong, Hong Kong. Email: {\tt clint.ho@cityu.edu.hk}.}%
}
\date{}
\begin{document}

\maketitle
\begin{onehalfspace}
\begin{abstract}
We consider generic stochastic optimization problems in the presence of side information which enables a more insightful decision. The side information constitutes observable exogenous covariates that alter the conditional probability distribution of the random problem parameters. A decision maker who adapts her decisions according to the observed side information solves an optimization problem where the objective function is specified by the conditional expectation of the random cost. If the joint probability distribution is unknown, then the conditional expectation can be approximated in a data-driven manner using the Nadaraya-Watson (NW) kernel regression. While the emerging approximation scheme has found successful applications in diverse decision problems under uncertainty, it is largely unknown whether the scheme can provide any reasonable out-of-sample performance guarantees. In this paper, we establish guarantees for the generic problems by leveraging techniques from moderate deviations theory. Our analysis motivates the use of a variance-based regularization scheme which, in general, leads to a non-convex optimization problem. We adopt ideas from distributionally robust optimization to obtain tractable formulations. We present numerical experiments for newsvendor and wind energy commitment problems to highlight the effectiveness of our regularization scheme. 
\noindent  \\ \\
 \noindent Keywords: stochastic optimization; side information; Nadaraya-Watson estimator; moderate deviation principles; large deviation principles; distributionally robust optimization



\end{abstract}

 

\section{Introduction} \label{sec:intro}
In the presence of uncertainty, decisions can often be improved by taking into account the side information, such as weather conditions, interest rates, exchange rates,  past prices and demands, volatility indices, etc., that provides a more accurate description of the uncertain problem parameters. In the stochastic optimization setting, the side information corresponds to observable exogenous covariates~$(\gamma_1,\ldots,\gamma_p)$ that may reshape the conditional probability distribution of the random problem parameters $(\tilde\xi_1,\ldots,\tilde\xi_q)$. A decision maker prescribed with full knowledge about  the joint distribution of the random vectors $\gammat\coloneqq(\tilde\gamma_1,\ldots,\tilde\gamma_p)$ and $\xit\coloneqq(\tilde\xi_1,\ldots,\tilde\xi_q)$ endeavors to solve the following stochastic optimization problem with side information:
\begin{equation}\label{eq:conditional_expectation_problem}
\min_{\bm x\in\mathcal X}\;\left\{ \EErho[\ell(\bm x,\xit)]\coloneqq\EE[\ell(\bm x,\xit)\,|\,\gammat=\bm\gamma]\right\}.
\tag{$\mathcal{SO}$}
\end{equation}
Here, the vector $\bm x\in\RR^d$ comprises all decision variables, while the objective function is specified through the conditional expectation of the random cost $\ell(\bm x,\xit)$ given the side information $\bm\gamma$. 

For instance, in the context of portfolio optimization---which aims to maximize the expected portfolio return---the loss function is defined as  $\ell(\bm x,\xit)\coloneqq - \xit^\top\bm x$, where $\xit\in\RR^q$ ($d=q$ in this case) and $\bm x$ correspond respectively to the vectors of random asset returns and allocated investments. If short selling is prohibited, then the feasible set $\mathcal X$ of the allocation vector $\bm x$ is described by the unit $q$-simplex. In this problem, the exogenous covariate vector $\gammat$ may additionally comprise the firms' market capitalizations, book-to-market ratios, past returns, and also include other market indicators such as the volatility indices and financial news indicators \citep{brandt2009parametric,baziergeneralization}. 
We now illustrate the importance of side information through the following example. 

\begin{example}[A Three-Asset Portfolio]\label{ex:finance}
Consider a stylized three-asset portfolio optimization problem, in which the decision maker allocates a total wealth of $\$1$. The return of asset $i$ is
\begin{equation*}
\tilde\xi_i(\tilde\gamma) = \left\{
\begin{array}{ll}
0.5 - \tilde\gamma^2 + 0.1 \cdot \tilde\epsilon_i  & \forall i = 1,2, \\
 0    & i=3,
\end{array}
\right.
\end{equation*}
where the side information/covariate $\tilde\gamma \in \mathbb{R}$ is governed by a uniform distribution on the interval $[-1,1]$. The random variables $\tilde\epsilon_1$ and $\tilde\epsilon_2$ are assumed to be bivariate normally distributed with zero means and unit variances and are perfectly negatively correlated.

Under this setting, the unconditional expected returns of the risky assets (i.e., $i=1,2$) are equal to 
$\EE\left[0.5 - \tilde\gamma^2 + 0.1\cdot\tilde\epsilon_i \right] = 1/6$. 
Thus, in the absence of any side information, the optimal expected portfolio return is $1/6$, which can be obtained by allocating the entire wealth into any convex combination of the risky assets.

However, suppose that the value of the side information $\gamma$ is revealed before the decision is made. In this case, the conditional expected return of each risky asset is  $ \frac{1}{2} - \gamma^2$. Hence, when $\gamma^2 < 1/2$, it is optimal to allocate the entire wealth into any convex combination of the risky assets; otherwise, it is optimal to allocate the entire wealth into the risk free asset. Since $\tilde\gamma$ follows a uniform distribution on $[-1,1]$, the optimal expected return of this strategy is given by
\begin{equation*}
\int_{-\sqrt{\frac{1}{2}}}^{\sqrt{\frac{1}{2}}} \; \frac{1}{2} \left( \frac{1}{2} - \gamma^2 \right) \; \text{d} \gamma = \frac{2}{3} \left( \frac{1}{2}\right)^{3/2} = \frac{1}{3\sqrt{2}}.
\end{equation*}
The above calculations show that the expected return deteriorates by $(1/(3\sqrt{2}) - 1/6)/(1/(3\sqrt{2})) \approx 29 \%$ if the portfolio manager ignores the side information. 
\end{example}
\noindent The example highlights the critical benefits of exploiting the side information in our decision making processes, when such information is available.

\subsection{Literature Review}
In the ideal case, solving \eqref{eq:conditional_expectation_problem} exactly allows us to make optimal decision with side information. However, in most situations of practical interest, the joint distribution of $(\gammat,\xit)$ is unknown, and only past historical data $\{(\bm\gamma^1,\bm\xi^1),\ldots,(\bm\gamma^n,\bm\xi^n)\}$ is available to infer the conditional distribution of $\xit$ and to estimate the conditional expectation in \eqref{eq:conditional_expectation_problem}. In recent years, there has been a focus on developing integrated learning and optimization frameworks to approximate the optimal solution for \eqref{eq:conditional_expectation_problem} with statistical guarantees on their performances. \cite{bertsimas2014predictive} consider different machine learning approaches to construct empirical conditional expectations that well approximate the conditional expectation in \eqref{eq:conditional_expectation_problem}. They further establish that the resulting approximations are asymptotically consistent, meaning that the approximations converge to the true conditional expectation as the sample size grows. \cite{bertsimas2019predictions} extend the result of \cite{bertsimas2014predictive} to the multistage setting under the assumption that covariates evolve according to a Markov process. The resulting data-driven decision is shown to be consistent and asymptotically optimal, and finite-sample guarantees are developed for k-nearest neighbors (KNN)-based approaches. Solutions to their proposed formulations, however, exhibit an optimistic bias if the sample size is small. 

To mitigate this overfitting effect,~\cite{hanasusanto2013robust} propose a robust version that minimizes a worst-case empirical conditional expectation in view of the most adverse weight vector that is close to the nominal one generated by the Nadaraya-Watson (NW) estimator. ~\cite{bertsimas2019dynamic} incorporate side information into robust dynamic programming problems and establish that the solution is asymptotically optimal for multi-period stochastic programs.
~\cite{bertsimas2017bootstrap} propose an alternative robust scheme whose solutions enjoy a limited disappointment on the bootstrap data.~\cite{esteban2020distributionally} construct a  framework 
using trimmings of probability distributions, which they prove to be connected with the partial mass transportation problem and show that the approach naturally produces distributionally robust optimization  (DRO) extensions of formulations with some nonparametric regression techniques. 

There exist other powerful and interesting approaches that solve \eqref{eq:conditional_expectation_problem} under more specific settings. For example, \citet{sen2018learning} and \cite{ban2019dynamic} first consider regression models with additive residual terms to model and generate scenarios for~$\xit$ given side information $\bm\gamma$. Inspired by their work and the sample average approximation scheme for classical stochastic optimization problems, \cite{kannan2020data} propose a formulation based on a regression model that assumes~$\xit$ to be modeled in terms of $\bm \gamma$ as  $\xit = f(\tilde{\bm\gamma})+\tilde{\bm\epsilon}$, where $f(\bm\gamma)=\mb E[\xit\lvert \tilde{\bm\gamma} = \bm\gamma ]$ and $\tilde{\bm\epsilon}$ are mean zero errors. This formulation, however, relies on the crucial assumption that the distribution of the errors $\tilde{\bm\epsilon}$ is independent of the covariates $\tilde{\bm \gamma}$, which allows them to formulate the problem as a sample average approximation problem that assigns an equal weight of $1/n$ to each observation. With the idea of obtaining better out-of-sample performances on problems with limited data, the authors incorporate their residual-based formulation into a DRO framework \citep{kannan2020residuals} and also consider extensions where they relax the homoscedasticity assumption on the residuals \citep{kannan2021heteroscedasticity}. In a similar spirit,~\cite{elmachtoub2021smart} propose a smart ``Predict, then Optimize" framework for contextual optimization problems with an unknown linear objective. Building upon the above ideas, \cite{sim2021} propose a robustness optimization counterpart for the robust satisficing framework. In this paper, we focus on the setting without assuming the regression models for $\xit$.

Despite the practical significance of the stochastic optimization problem \eqref{eq:conditional_expectation_problem}, there is an incomplete picture of the  properties of the existing solution schemes. Although the NW approximation is shown to be asymptotically consistent \citep{bertsimas2014predictive}, it is unknown whether the scheme could provide out-of-sample performance guarantees for solutions to the generic problems. An alternative method that optimizes over parametric \emph{decision rules}, such as linear or quadratic functions in $\bm\gamma$, can generate finite-sample performance bounds \citep{bertsimas2014predictive,ban2018big,baziergeneralization}. In \cite{brandt2009parametric}, the portfolio optimization with side information model is solved in view of linear decision rules (LDR) where one seeks for the best linear policy in the exogenous covariates that maximizes the empirical return. An $\ell_2$-regularized version of the linear decision rule approximation is studied in \cite{baziergeneralization}. The decision rules scheme, however, is less attractive because it is not asymptotically consistent, meaning that we cannot  produce results that would parallel those of sample-average approximation in the classical setting of  stochastic optimization  without side information~\citep{kleywegt2002sample,Shapiro09}. In~\cite{ban2018big}, the authors apply both the NW and decision rule approximations to the single-item newsvendor problem and derive finite-sample performance guarantees for the solutions. Unfortunately, the bound for the NW approximation inconveniently relies on an optimal solution to the corresponding linear decision rule problem. An alternative bound derived in \cite{bertsimas2017bootstrap} holds only for the bootstrap data, that is generated via resampling from the empirical distribution. Although encouraging, their bound does not provide a complete understanding on its out-of-sample performance. 

\subsection{Our Contributions}
This paper focuses on the approximation scheme using the popular NW kernel regression estimator \citep{nadaraya:64,watson:64}. 
By leveraging techniques from \emph{large and moderate deviations theory}, we derive for the first time out-of-sample performance guarantees for the empirical conditional expectation minimization model. Our result indicates that the out-of-sample errors of the approximation scale with $O(\sqrt{1/(nh^p)})$, where $h > 0$ is the bandwidth parameter that is used for the kernel function in our proposed model. In contrast to the result in \cite{ban2018big} for a single-item newsvendor problem, our guarantees hold independently of optimal solutions to the corresponding linear decision rule problems and   conform with the best bandwidth parameter scaling $h=O(1/n^{1/(p+4)})$ suggested in the literature. As a byproduct of our new theoretical result, we identify a suitable regularization term in   \emph{empirical conditional standard deviation}. If this term is small, then our guarantees imply that the out-of-sample errors are of the lower rate  $\sim O({1/(nh^p)})$. Thus, the regularization term will encourage an optimal  solution that  yields small generalization errors. We devise a solution scheme for this variance regularized formulation based on a distributionally robust optimization (DRO) problem. Numerical results in the context of newsvendor and wind energy commitment problems demonstrate the superiority of our new regularized NW approximation over the linear decision rule scheme and a state-of-the-art DRO framework proposed by~\cite{kannan2020residuals}.


We summarize below the main contributions of the paper: 
\begin{enumerate}
\item Leveraging techniques from large and moderate deviations theory, we derive generalization bounds for the NW estimator. Unfortunately, typical for settings where kernel functions are used, the bound suffers from the \emph{curse of dimensionality}, which becomes prominent when the side information vector~$\bm\gamma$ is high-dimensional. 
We propose a dimensionality reduction scheme based on principal component analysis (PCA) that strengthens our obtained bounds for the case where the intrinsic dimensionality of $\bm\gamma$ is small, even though the dimensionality of the ambient space may be large.
\item Our generalization bound motivates the use of a variance-based regularization scheme, where in addition to the empirical conditional expectation 
specified by the NW estimator, we minimize a penalty term that corresponds to empirical conditional standard deviation of the random cost function in the objective. 
Furthermore, we derive suboptimality bounds for the optimal solution $\bm x^\star$ obtained for this variance-regularized formulation. 
\item  In general, a variance-based regularization scheme leads to a non-convex formulation and, therefore, is intractable. We derive an exact mixed-integer second-order cone programming (MISOCP) reformulation  for the case when the loss function $\ell(\bm x, \bm\xi)$ is piecewise linear in~$\bm x$ for all $\bm \xi \in \Xi$, which can be solved using off-the-shelf optimization solvers. Furthermore, we show that the problem reduces to an efficiently solvable second-order cone program (SOCP) if $\ell(\bm x, \bm\xi)$ is linear in $\bm x$ for all $\bm \xi \in \Xi$ and the solution set $\mathcal{X}$ is second-order conic representable. 
\item Adapting ideas from \citet{duchi2019variance} proposed in the context of empirical risk minimization problems, we develop a DRO formulation for the case when the loss function is a general convex function of $\bm x$ for all $\bm \xi \in \Xi$. Furthermore, we establish the equivalence of our variance regularized formulation and the DRO formulation for large sample sizes. For a convex loss function that is quadratic or piecewise linear in $\bm x$, the DRO formulation reduces to a SOCP if $\mc X$ is second-order conic representable.
\end{enumerate}

The remainder of the paper is organized as follows. In Section~\ref{sec:background}, we provide a background on the Nadaraya-Watson kernel regression estimator as well as introduce the large and moderate deviations theory on which our main results are based. In Section~\ref{sec:generalization_bound}, we derive the generalization bound for the NW approximation using results from moderate deviations theory and present the PCA-based dimensionality reduction scheme. Section~\ref{sec:regularization_scheme} develops a regularization scheme that is motivated by the generalization bound and derives the suboptimality bound for the proposed method. The section also develops an exact reformulation for the regularized problem based on piecewise linear convex loss functions and presents an application from portfolio management. In Section~\ref{sec:DRO_formulation}, we propose a distributionally robust optimization formulation for general convex loss functions. In Section~\ref{sec:numerical_exp}, we provide computational results for inventory management and wind energy commitment problems. Finally, we provide concluding remarks in Section~\ref{sec:conclusion}. For clarity of exposition, lengthy and technical proofs are deferred to the appendix.

\paragraph{Notation and terminology}
We use bold letters for vectors, while scalars are printed in regular font. We denote by $\mathbf e$  the vector of all ones. Random variables are designated by tilde signs (e.g., $\xit$), while their realizations are represented by the same symbols without tildes (e.g., $\bm\xi$). For any $n\in\mathbb N$, we define $[n]$ as the  index set $\{1,\ldots,n\}$. For any matrix $\bmath A$, the operator norm $\lVert \bmath A \rVert_2$ represents the largest singular value of $\bmath A$ and its Frobenius norm is defined as $\lVert \bmath A\rVert_{\fr}=\big(\sum_{ij}A_{ij}^2\big)^{1/2}$. We define by ${\SOC}(n+1) \subseteq \RR^{n+1}$ the standard second-order cone:
$\bm{v} \in {\SOC}(n+1) \Longleftrightarrow \|(v_1, \ldots, v_n)^\top\| \leq v_{n+1}$. The probability simplex in $\mathbb{R}^n_+$ is denoted as $\Delta^n = \left\{ \bm{w} \in \mathbb{R}^n_+ : \mathbf{e}^\top\bm{w}=1 \right\}$ and the Dirac distribution which assigns unit mass on $\bm \xi$ is denoted by $\delta_{\bm \xi}$. For any $x \in \RR$, we define $(x)_+=\max(x,0)$.

For  asymptotic analysis, we use standard notations like $o$ and $O$ to represent rates of convergence. 
We use $\tilde{O}$ notation to denote the $O$ notation that suppresses multiplicative terms with logarithmic dependence on $n$.

\section{Background}\label{sec:background}

In this section, we provide the preliminaries of Nadaraya-Watson (NW) approximation and large and moderate deviations theory that are necessary for the development of the main results in this paper. 

\subsection{Nadaraya-Watson Kernel Regression}
To approximate \eqref{eq:conditional_expectation_problem}, we apply the NW kernel regression which estimates the conditional expectation with
\begin{equation}
\tag{$\mathcal {NW}_{\textup{est}}$}
\label{eq:nadaraya-watson}
\displaystyle\hat\EE_{\bm\gamma}[\ell(\bm x,\xit)]=\frac{\sum_{i=1}^n\mathcal K\left(\frac{\bm\gamma-\bm\gamma^i}{h}\right)\ell(\bm x,\bm\xi^i)}{\sum_{i=1}^n\mathcal K\left(\frac{\bm\gamma-\bm\gamma^i}{h}\right)},
\end{equation}
where $\mathcal K$ is a prescribed kernel function and $h > 0$ is the bandwidth parameter. In this paper, we consider the exponential kernel function given by \citep{Genton2001kernel}
\begin{equation}
\label{eq:Gaussian_kernel}
\mathcal K(\bm \theta)=\frac{1}{Z} \exp\left(-\|\bm \theta\|_2\right),
\end{equation}
with $Z = \int_{\mathbb{R}^p} \exp\left(-\|\bm \theta\|_2\right)\mathrm d\bm\theta$  a normalization constant. 

The estimator \eqref{eq:nadaraya-watson} encapsulates a popular model in data-driven analytics. Indeed, an extremely large value of the bandwidth parameter $h$ means that the approximation \eqref{eq:nadaraya-watson} reduces to the unconditional \emph{sample-average approximation} $\frac{1}{n}\sum_{i=1}^n\ell(\bm x,\bm\xi^i)$. 
On the other hand, a very small bandwidth implies that most of the probability mass is assigned to the sample point closest to $\bm\gamma$.  The choice $h=O(1/n^{1/(p+4)})$ provides the best balance between bias and variance that yields the minimum  expected error  \citep{gyorfi2006distribution}. 

Using the estimator~\eqref{eq:nadaraya-watson}, we arrive at the following approximation to the stochastic optimization problem~\eqref{eq:conditional_expectation_problem}:
\begin{equation}
\tag{$\mathcal {NW}$}
\label{eq:nadaraya-watson_problem}
\min_{\bm x\in\mathcal X}\;\EErhohat[\ell(\bm x,\xit)]. 
\end{equation}
This approximation is first developed by~\cite{Hannah:10}.

\subsection{Large and Moderate Deviations Theory}
Large deviations theory studies the tail behavior of sequences of random variables.  
It characterizes the exponential decay rate of the probability that a random variable in the sequence realizes on  any particular rare event. Formally, we say that the sequence of random variables $\{\tilde z_n\}_{n\in\mathbb N}$ satisfies a large deviation principle with speed $\nu_n$ and rate function $I:\RR\rightarrow[0,+\infty]$ if
\begin{equation}
\label{eq:LDP}
\begin{array}{l}
\displaystyle	\liminf_{n\rightarrow \infty} \frac{1}{\nu_n}\log \mathbb P\left(\tilde z_n\in\mathcal O\right)\geq-\inf _{y\in\mathcal O} I(y)\quad\textup{and}\quad
\displaystyle	\limsup_{n\rightarrow \infty} \frac{1}{\nu_n}\log \mathbb P\left(\tilde z_n\in\mathcal C\right)\leq-\inf _{y\in\mathcal C} I(y),
	\end{array}
	\end{equation}
for every open subset $\mathcal O$ and closed subset $\mathcal C$ of $\mathbb R$, respectively. If the random variable is defined as the  average $\tilde z_n=\frac{1}{n}\sum_{i=1}^n\tilde r_i$ of i.i.d.~random variables  $\tilde r_i$, $i\in\mathbb N$, with a finite logarithmic moment generating function $\Lambda(t)=\EE[\exp(t\tilde r_1)]<+\infty$, then we obtain  the Cramer's theorem which states that the  sequence $\{\tilde z_n\}_{n\in\mathbb N}$ obeys a large deviation principle with speed $\nu_n=n$ and rate $I(y)=\sup_{t\geq 0}(t y-\Lambda(t))$. The inequalities in \eqref{eq:LDP} thus imply that for large enough $n$ the probability that $\tilde z_n$ takes value within the rare event set $\{z:z\geq y\}$, with $y>\EE[\tilde r_1]$, is roughly equal to 
$\exp(-n I(y))$. That is, it decays exponentially fast in $n$ at the rate $I(y)$.
Note that the rate function depends on the particular distribution of the random variable $\tilde r_1$. From the central limit theorem, however, we know that the distribution of the renormalized average $\sqrt{n}\tilde z_n=\frac{1}{\sqrt{n}}\sum_{i=1}^n\tilde r_i$ is asymptotically  normal, which admits a succinct description through the first and second-order moments of  $\tilde r_1$.

Moderate deviations theory delineates the intermediate cases between the two extremes of large deviations theory and central limit theorem. The theory  studies situations where  the sequence $\{a_n\tilde z_n\}_{n\in\mathbb N}$ obeys a large deviation principle with the \emph{same} rate function for a certain range of renormalization parameters $a_n\rightarrow\infty$. The theory often provides a result that combines both large deviations theory and central limit theorem. Analogous to the central limit behavior, the rate function in a moderate deviation principle is typically \emph{analytical}, requiring only limited information about the distribution, such as the variance. However, we also observe an exponential decay rate characteristic of results in large deviations theory. In the case of  i.i.d.~random variables, we find that if $a_n^2/n\rightarrow 0$ as $n\rightarrow \infty$ then the sequence $\{a_n\tilde z_n\}_{n\in\mathbb N}$ obeys a large deviation principle with speed $n/a_n^2$ and analytical rate function $I(y)=\frac{1}{2}y^2/\sigma^2$, where $\sigma^2$ is the variance of the random variable $\tilde r_1$ \cite[Theorem 3.7.1]{dembo38large}. We refer the  reader to the references \citep{dembo38large,eichelsbacher2003moderate} for  a more detailed account on large and moderate deviations theory.
\section{Generalization Bounds via Moderate Deviation Principles}\label{sec:generalization_bound}

In this section, we first derive generalization bounds on the approximation \eqref{eq:nadaraya-watson_problem} for a fixed decision~$\bm x$. The result leverages the following moderate deviations theory of the NW estimator by~\cite{mokkadem:08}  in the setting of exponential kernel functions. To apply this theorem, in this paper we assume the following mild regularity conditions:
\begin{enumerate}[label=(\textbf{A\arabic*})]
\item The support $\Xi$ of the random vector $\xit$ is compact and the loss function $\ell(\bm x,\bm\xi)$ 
takes value in the interval $[0,1]$ for all $\bm x\in\mathcal X$ and $\bm\xi\in\Xi$. \label{as1} 
\item The density function $f(\bm\gamma,\bm\xi)$ is twice differentiable with continuous and bounded partial derivatives. In addition, the marginal density ${f}_{\tilde{\bm\gm}}(\bm{\gamma})$ is non-zero at the given side information vector $\bm{\gamma}$. \label{as2} 
\item The bandwidth parameter $h$ for the kernel function $\mathcal K_h$ is scaled such that $\lim_{n\rightarrow \infty} h_n=0$ and $\lim_{n\rightarrow \infty}n h^p_n=\infty$. 
\label{as3} 
\end{enumerate}
The assumptions about the support set  and the loss function in \ref{as1} are typical in the literature. Here, we do not impose any restriction on the size and structure of the support set  other than its compactness. 
If the loss function is bounded, then one can simply apply scaling and translation so that it takes value in the interval $[0,1]$. The assumptions about the density function in \ref{as2} are standard  regularity conditions  in kernel density and kernel  regression estimations. They ensure that the conditional distribution of $\xit$  given the side information $\bm\gamma$ can be inferred reasonably well using the historical observations. The assumption about the bandwidth parameter $h$ in \ref{as3} ensures that the estimator \eqref{eq:nadaraya-watson} is asymptotically consistent \citep{gyorfi2006distribution,silverman:86}. 

\begin{theorem}\label{thm:MDP}[Moderate Deviation Principles]
Let the density function $f:\RR^p\times\RR^q\rightarrow\RR$  satisfy assumption \ref{as2}. Consider a function $L:\RR^q\rightarrow\RR$ that satisfies the following conditions:
\begin{enumerate}
\item The function $\bm t\rightarrow\int_{\RR^q}L(\bm\xi)^2f(\bm t,\bm\xi)\mathrm d\bm\xi$ is continuous at $\bm t=\bm\gamma$.  
\item For every $u\in\RR$, the function  $\bm t\rightarrow\int_{\RR^q}\exp(uL(\bm\xi))f(\bm t,\bm\xi)\mathrm d\bm\xi$ is bounded and continuous at $\bm t=\bm\gamma$. 
\item The function $\bm{t}\rightarrow\int_{\RR^q}L(\bm\xi)f(\bm t,\bm\xi)\mathrm d\bm\xi$ is twice  differentiable on $\mb R^p$, with continuous and bounded partial derivatives at  $\bm t=\bm\gamma$. 
\end{enumerate}
	
Then, for any positive sequence $\{a_n\}_{n\in\mathbb N}$ such that
\begin{equation*}
\lim_{n\rightarrow \infty} a_n=\infty,\quad \lim_{n\rightarrow \infty} \frac{a_n^2}{n h_n^p}=0,\quad\textup{and}\quad\lim_{n\rightarrow \infty}a_n h_n^2=0,
\end{equation*}
the sequence $\{a_n(\EErho[L(\xit)]-\EErhohat[L(\xit)])\}_{n\in\mathbb N}$ satisfies a large deviation principle with speed $\nu_n=n h_n^p/a_n^2$ and rate function 	
\begin{equation}\label{eq:rate_function}
I_{\bm\gamma}(y)=\frac{y^2 g(\bm\gamma)}{\VVrho[L(\xit)]}
\end{equation}
where $g(\bm\gamma) = f_{\tilde{\bm\gamma}}(\bm\gamma)/\left( 2\int_{\RR^p}\mathcal K^2(\bm \theta)\mathrm d\bm \theta \right)$ is the scaled marginal density of $\gammat$ and $\VVrho[L(\xit)]=\mathbb V[L(\xit)|\gammat=\bm\gamma]$ is the conditional variance of $L(\xit)$ given the side information $\bm\gamma$. 
	That is, we have 
		\begin{equation}
		\label{eq:MDP}
		\begin{array}{l}
\displaystyle	\liminf_{n\rightarrow \infty} \frac{1}{\nu_n}\log \mathbb P\left(a_n\left(\EErho[L(\xit)]-\EErhohat[L(\xit)]\right)\in\mathcal O\right)\geq-\inf _{y\in\mathcal O} I_{\bm\gamma}(y)\quad\textup{and}\\[3mm]
\displaystyle	\limsup_{n\rightarrow \infty} \frac{1}{\nu_n}\log \mathbb P\left(a_n\left(\EErho[L(\xit)]-\EErhohat[L(\xit)]\right)\in\mathcal C\right)\leq-\inf _{y\in\mathcal C} I_{\bm\gamma}(y),
	\end{array}
	\end{equation}
	for every open subset $\mathcal O$ and closed subset $\mathcal C$ of $\mathbb R$, respectively.
\end{theorem}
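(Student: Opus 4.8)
The plan is to reduce the statement to a moderate deviation principle for the ratio of two kernel-weighted averages and then transfer it through the delta-method (contraction) principle for moderate deviations. First I would write the NW estimator as $\EErhohat[L(\xit)] = \hat\psi_n(\bm\gamma)/\hat f_n(\bm\gamma)$, where $\hat\psi_n(\bm\gamma) = (nh_n^p)^{-1}\sum_{i=1}^n \mathcal K((\bm\gamma-\bm\gamma^i)/h_n)L(\bm\xi^i)$ and $\hat f_n(\bm\gamma) = (nh_n^p)^{-1}\sum_{i=1}^n \mathcal K((\bm\gamma-\bm\gamma^i)/h_n)$ is the Parzen--Rosenblatt density estimator. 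The quantity being approximated is the corresponding ratio $\psi(\bm\gamma)/f_{\tilde{\bm\gamma}}(\bm\gamma)$, with $\psi(\bm\gamma) = \int_{\RR^q} L(\bm\xi)f(\bm\gamma,\bm\xi)\mathrm d\bm\xi$. Thus the target deviation $\EErho[L(\xit)]-\EErhohat[L(\xit)]$ is a smooth (ratio) functional of the centered pair $(\hat\psi_n-\psi,\ \hat f_n-f_{\tilde{\bm\gamma}})$.

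Next I would establish a joint MDP, at speed $\nu_n = nh_n^p/a_n^2$, for the centered and rescaled pair $a_n\big(\hat\psi_n(\bm\gamma) - \EE[\hat\psi_n(\bm\gamma)],\ \hat f_n(\bm\gamma) - \EE[\hat f_n(\bm\gamma)]\big)$. Each coordinate is a normalized sum of independent triangular-array terms, so the limiting normalized cumulant generating function can be computed via the change of variables $\bm\theta = (\bm\gamma-\bm t)/h_n$ and a Laplace/Taylor expansion; conditions (1)--(2) on $L$ guarantee that the relevant second-moment and exponential-moment integrals are finite and continuous at $\bm t=\bm\gamma$, which is precisely what a G\"artner--Ellis/Lindeberg-type verification for triangular arrays requires. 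In this regime the constraint $a_n^2/(nh_n^p)\to 0$ forces the limiting log-MGF to be purely quadratic, so the joint rate is a positive-definite quadratic form determined by the asymptotic covariance, whose entries reduce to $f_{\tilde{\bm\gamma}}(\bm\gamma)\int_{\RR^p}\mathcal K^2(\bm\theta)\mathrm d\bm\theta$ weighted by the conditional moments of $L$.

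I would then dispose of the bias separately: under assumption \ref{as2} and condition (3) on $L$, a second-order Taylor expansion of $\psi$ and $f_{\tilde{\bm\gamma}}$ shows that $\EE[\hat\psi_n]-\psi$ and $\EE[\hat f_n]-f_{\tilde{\bm\gamma}}$ are both $O(h_n^2)$; multiplied by $a_n$ these vanish because $a_n h_n^2\to 0$, so centering at the true moments rather than at the expectations is exponentially equivalent and leaves the rate unchanged. Finally, since $(\psi,\phi)\mapsto\psi/\phi$ is smooth at $(\psi(\bm\gamma),f_{\tilde{\bm\gamma}}(\bm\gamma))$ with $f_{\tilde{\bm\gamma}}(\bm\gamma)>0$, the first-order expansion $\hat\psi_n/\hat f_n - \psi/f_{\tilde{\bm\gamma}} \approx f_{\tilde{\bm\gamma}}(\bm\gamma)^{-1}\big[(\hat\psi_n-\psi) - \EErho[L(\xit)](\hat f_n - f_{\tilde{\bm\gamma}})\big]$ transfers the joint quadratic MDP to a scalar MDP by the contraction principle; plugging the effective variance $\VVrho[L(\xit)]\int_{\RR^p}\mathcal K^2(\bm\theta)\mathrm d\bm\theta/f_{\tilde{\bm\gamma}}(\bm\gamma)$ into the generic quadratic moderate-deviation rate $y^2/(2\sigma^2)$ yields exactly the stated $I_{\bm\gamma}(y)=y^2 g(\bm\gamma)/\VVrho[L(\xit)]$.

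The main obstacle is the rigorous transfer across the nonlinear ratio map: one must show the quadratic Taylor remainder in the ratio is superexponentially negligible at speed $\nu_n$ (an exponential-equivalence estimate), and that the triangular-array MDP for the weighted sums holds with enough uniformity to support the contraction argument. Equivalently---and this is the route I expect to be cleanest for the paper---one can simply verify that conditions (1)--(3) together with assumption \ref{as2} are precisely the hypotheses of the general Nadaraya--Watson moderate-deviation theorem of \citet{mokkadem:08} and then specialize their rate function to the exponential kernel \eqref{eq:Gaussian_kernel}, for which $\int_{\RR^p}\mathcal K^2(\bm\theta)\mathrm d\bm\theta$ is the only surviving kernel-dependent constant.
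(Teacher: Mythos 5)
Your proposal is correct, and its closing paragraph is precisely the paper's proof: the paper does nothing more than verify that the setting satisfies conditions (A)--(E) of Theorem 2 of \cite{mokkadem:08} --- kernel integrability, the bounded/continuous exponential- and second-moment integrals supplied by your conditions (1)--(2) after rewriting them as integrals against the joint density of $(\gammat,L(\xit))$, the $S=2$ smoothness coming from condition (3) and Assumption \ref{as2}, and $a_nh_n^2\to 0$ serving as the order-$S$ bias condition --- and then reads off the rate function \eqref{eq:generic_rate}. The bulk of your write-up, the self-contained route (joint triangular-array MDP for the pair $(\hat\psi_n,\hat f_n)$, bias of order $O(h_n^2)$ killed by $a_nh_n^2\to 0$, linearization of the ratio followed by exponential equivalence and contraction), is a faithful sketch of the machinery \emph{inside} the cited theorem rather than a genuinely different argument; it buys self-containedness at the cost of the two technical burdens you honestly flag (superexponential negligibility of the quadratic remainder of the ratio map at speed $\nu_n$, and a G\"artner--Ellis verification for the kernel-weighted triangular array), whereas the citation route reduces everything to kernel-specific checks for the exponential kernel --- namely $\int_{\RR^p}\theta_j\,\mathcal K(\bm\theta)\,\mathrm d\bm\theta=0$ by symmetry and $\int_{\RR^p}\bigl\lvert\theta_j^2\,\mathcal K(\bm\theta)\bigr\rvert\,\mathrm d\bm\theta<\infty$ via the bound $\lVert\bm\theta\rVert_2\ge\lVert\bm\theta\rVert_1/p$ --- which is the one small verification your sketch leaves implicit. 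Your identification of the effective variance $\VVrho[L(\xit)]\int_{\RR^p}\mathcal K^2(\bm\theta)\,\mathrm d\bm\theta/f_{\tilde{\bm\gamma}}(\bm\gamma)$ and the resulting quadratic rate matches \eqref{eq:rate_function} exactly.
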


\subsection{Generalization Bounds}
Using \Cref{thm:MDP}, we arrive at our first main result whose proof can be found in Appendix~\ref{app:generalization_bound}.

\begin{proposition}
\label{thm:generalization_bound}
For any fixed $\bm x\in\mathcal X$, we have
\begin{equation}\label{eq:generalization_1}
\mathbb P\left(\left|\EErho[\ell(\bm x,\xit)]-\EErhohat[\ell(\bm x,\xit)]\right|\geq\epsilon\right)= \exp\left(-nh_n^p\frac{ \epsilon^2 g(\bm\gamma)(1+o(1))}{\VVrho[\ell(\bm x,\xit)]}\right).
\end{equation}
\end{proposition}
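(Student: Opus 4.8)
The plan is to invoke Theorem~\ref{thm:MDP} directly with the choice $L(\bm\xi)=\ell(\bm x,\bm\xi)$ for the fixed decision $\bm x$, and then to convert the resulting large deviation principle into the stated fixed-deviation probability estimate by exploiting the quadratic (hence scale-invariant) structure of the rate function. Throughout, write $\tilde z_n \coloneqq \EErho[\ell(\bm x,\xit)]-\EErhohat[\ell(\bm x,\xit)]$.

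First I would verify that $L=\ell(\bm x,\cdot)$ meets the three hypotheses of Theorem~\ref{thm:MDP}. All three concern integrals against $f(\bm t,\bm\xi)$ of $L$, $L^2$, or $\exp(uL)$, and the key leverage is assumption~\ref{as1}: since $\Xi$ is compact and $\ell(\bm x,\bm\xi)\in[0,1]$, each integrand is uniformly bounded on the effective domain of integration (e.g.\ $L^2\le 1$ and $\exp(uL)\le e^{|u|}$). Boundedness of the integrals is then immediate, while continuity at $\bm t=\bm\gamma$ and twice differentiability with continuous, bounded partials follow from assumption~\ref{as2}---the smoothness and boundedness of $f$ and its first- and second-order partials---via dominated convergence and differentiation under the integral sign, the bounded partials of $f$ supplying the dominating functions on the compact set $\Xi$. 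Thus Theorem~\ref{thm:MDP} applies: for every admissible sequence $\{a_n\}$, the renormalized sequence $\{a_n\tilde z_n\}$ obeys a large deviation principle with speed $\nu_n=nh_n^p/a_n^2$ and rate $I_{\bm\gamma}(y)=y^2 g(\bm\gamma)/\VVrho[\ell(\bm x,\xit)]$.

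Next I would translate this into a bound on the fixed-magnitude event $\{|\tilde z_n|\ge\epsilon\}$. Observe that $\{|\tilde z_n|\ge\epsilon\}=\{|a_n\tilde z_n|\ge a_n\epsilon\}$, so applying the upper inequality in \eqref{eq:MDP} to the closed set $\mathcal C_n=\{y:|y|\ge a_n\epsilon\}$ and the lower inequality to the matching open set yields
\[
\tfrac{1}{\nu_n}\log\mathbb P\big(|\tilde z_n|\ge\epsilon\big)\ \approx\ -\inf_{|y|\ge a_n\epsilon} I_{\bm\gamma}(y)\ =\ -I_{\bm\gamma}(a_n\epsilon)\ =\ -\,\frac{a_n^2\,\epsilon^2\, g(\bm\gamma)}{\VVrho[\ell(\bm x,\xit)]},
\]
where the middle equality uses that $I_{\bm\gamma}$ is increasing in $|y|$. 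Multiplying through by $\nu_n=nh_n^p/a_n^2$, the factor $a_n^2$ cancels and the exponent collapses to $nh_n^p\,\epsilon^2 g(\bm\gamma)/\VVrho[\ell(\bm x,\xit)]$, \emph{independently of the choice of $a_n$}. Absorbing the two-sided union factor of at most $2$ and the subexponential gaps between the liminf/limsup estimates into the $(1+o(1))$ multiplier then gives \eqref{eq:generalization_1}; positivity of $g(\bm\gamma)$ (from the non-vanishing marginal density in \ref{as2}) and of $\VVrho[\ell(\bm x,\xit)]$ guarantees a well-defined, strictly positive rate.

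The main obstacle is the legitimacy of the third step: the large deviation principle in Theorem~\ref{thm:MDP} is stated for \emph{fixed} Borel sets in the renormalized variable $a_n\tilde z_n$, whereas the fixed deviation $\epsilon$ of $\tilde z_n$ corresponds to the \emph{moving} level set $\mathcal C_n=\{|y|\ge a_n\epsilon\}$, whose threshold diverges with $n$. Justifying the evaluation of the rate function at the moving point $a_n\epsilon$ is the crux; what rescues the argument is the positively homogeneous quadratic form of $I_{\bm\gamma}$, which renders $\nu_n I_{\bm\gamma}(a_n\epsilon)$ exactly independent of $a_n$, so the estimate is stable along the entire admissible family of scalings. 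I would make this precise by fixing a single slowly diverging $a_n$ compatible with the three limits of Theorem~\ref{thm:MDP}---which is feasible under \ref{as3}, since both $\sqrt{nh_n^p}$ and $h_n^{-2}$ diverge---and reading the upper and lower exponential estimates as the $(1+o(1))$ statement in \eqref{eq:generalization_1}.
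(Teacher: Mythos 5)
Your proposal is correct and follows essentially the same route as the paper's own proof: verify the three hypotheses of Theorem~\ref{thm:MDP} for $L=\ell(\bm x,\cdot)$ using the boundedness from \ref{as1} and the smoothness of $f$ from \ref{as2}, apply the LDP to the symmetric closed/open sets $\{y:|y|\geq \epsilon'\}$ where the quadratic rate function makes both infima equal $I_{\bm\gamma}(\epsilon')$, and then substitute $\epsilon'=\epsilon a_n$ so that the $a_n^2$ in $I_{\bm\gamma}(\epsilon a_n)$ cancels against the speed $\nu_n=nh_n^p/a_n^2$, leaving an exponent independent of the scaling. Your explicit discussion of the moving-set subtlety (justified by the positive homogeneity of the quadratic rate) is in fact more candid than the paper, which performs the same substitution $\epsilon'=\epsilon a_n$ without comment; the only cosmetic difference is that you invoke dominated convergence and differentiation under the integral where the paper runs an $\epsilon$--$\delta$ compactness-contradiction argument and Leibniz's rule, which are interchangeable here.
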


\noindent \Cref{thm:generalization_bound} asserts that, as the sample size grows, the probability that the NW approximation deviates by at least $\epsilon$ from the true conditional expectation decays exponentially fast in $nh_n^p$. Setting the right-hand side of \eqref{eq:generalization_1} to $\delta$, we arrive at the following guarantee on the out-of-sample errors. 
\begin{corollary}[Generalization Bound]
\label{cor:generalization_bound}
For any fixed $\bm x\in\mathcal X$, we have 
\begin{equation}
\label{eq:error_bound}
\left|\EErho[\ell(\bm x,\xit)]-\EErhohat[\ell(\bm x,\xit)]\right|\leq \sqrt{\frac{\VVrho[\ell(\bm x,\xit)]}{ n h_n^pg(\bm\gamma)(1+o(1))}\log\left(\frac{1}{\delta}\right)} = O\left( \sqrt{\frac{1}{nh_n^p}} \right),
\end{equation}
with probability at least $1-\delta$.
\end{corollary}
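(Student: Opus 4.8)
The plan is to obtain the corollary by directly inverting the tail bound established in \Cref{thm:generalization_bound}. First I would recall that, for any fixed $\bm x \in \mathcal X$, that proposition delivers the exact leading-order tail behaviour
\[
\mathbb P\left(\left|\EErho[\ell(\bm x,\xit)]-\EErhohat[\ell(\bm x,\xit)]\right|\geq\epsilon\right)= \exp\left(-nh_n^p\frac{ \epsilon^2 g(\bm\gamma)(1+o(1))}{\VVrho[\ell(\bm x,\xit)]}\right).
\]
The right-hand side is a strictly decreasing function of $\epsilon$ for each fixed $n$ (since $g(\bm\gamma)>0$ by assumption~\ref{as2} and $\VVrho[\ell(\bm x,\xit)]>0$), so there is a unique threshold $\epsilon_\delta$ at which this tail probability equals the prescribed confidence level $\delta\in(0,1)$.

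Next I would solve for that threshold: setting the exponential equal to $\delta$, taking logarithms on both sides, and isolating $\epsilon$ yields
\[
\epsilon_\delta = \sqrt{\frac{\VVrho[\ell(\bm x,\xit)]}{ n h_n^p\, g(\bm\gamma)(1+o(1))}\log\left(\frac{1}{\delta}\right)},
\]
which is exactly the expression appearing in~\eqref{eq:error_bound}. Because the event $\{\,|\EErho[\ell(\bm x,\xit)]-\EErhohat[\ell(\bm x,\xit)]|\geq\epsilon_\delta\,\}$ carries probability $\delta$, its complement---on which the deviation is strictly below $\epsilon_\delta$---holds with probability at least $1-\delta$, giving precisely the high-probability guarantee claimed.

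Finally, to establish the $O(\sqrt{1/(nh_n^p)})$ rate I would argue that every factor under the square root other than $1/(nh_n^p)$ is bounded by a constant independent of $n$. Indeed, by assumption~\ref{as1} the loss takes values in $[0,1]$, so $\VVrho[\ell(\bm x,\xit)]\leq 1/4$; by assumption~\ref{as2} the marginal density $f_{\tilde{\bm\gamma}}(\bm\gamma)$ is strictly positive at $\bm\gamma$, so $g(\bm\gamma)$ is a fixed positive constant bounded away from zero; and $\log(1/\delta)$ is a fixed constant once $\delta$ is chosen. Absorbing these factors into the constant and noting that $1+o(1)\to 1$ yields the claimed scaling. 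I expect no genuine obstacle in this argument: the entire analytical difficulty---the moderate-deviations machinery that produces the exact exponential rate and its analytical rate function $I_{\bm\gamma}$---is already discharged in \Cref{thm:generalization_bound}, so the corollary amounts to a one-line algebraic inversion together with the boundedness observations above.
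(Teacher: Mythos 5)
Your proposal is correct and follows essentially the same route as the paper, which obtains the corollary in one line by setting the right-hand side of \eqref{eq:generalization_1} to $\delta$ and solving for $\epsilon$. Your additional remarks justifying the $O\bigl(\sqrt{1/(nh_n^p)}\bigr)$ rate---boundedness of $\VVrho[\ell(\bm x,\xit)]$ via assumption~\ref{as1}, positivity of $g(\bm\gamma)$ via assumption~\ref{as2}, and fixed $\log(1/\delta)$---are sound and merely make explicit what the paper leaves implicit.
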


\begin{remark}
With minor modifications, it is possible to derive a similar generalization bound when the popular Gaussian kernel is used in~\ref{eq:nadaraya-watson} instead of the exponential kernel.
\end{remark}

\noindent The bound in \eqref{eq:error_bound} degrades if the scaled density $g(\bm\gamma)$ is small or if the conditional variance $\VVrho[\ell(\bm x,\xit)]$ is large. In the limit where $g(\bm\gamma)\downarrow 0$, there are fewer historical samples close to the given side information, implying that the NW estimator constitutes a poor  approximation of the true conditional expectation. On the other hand, a smaller variance indicates that  few data points are sufficient to accurately describe the conditional distribution of $\xit$ given $\bm\gamma$. 

Using the best bandwidth parameter scaling $h_n= O(1/n^{1/(p+4)})$ for the multivariate NW estimator \cite[Chapter 5.2]{gyorfi2006distribution}, we find that the error bound in \eqref{eq:error_bound} diminishes at the rate of $ O(1/n^{2/(p+4)})$. Note that we have a dependence on the dimension~$p$, which suggests that the estimator suffers from the \emph{curse of dimensionality}. 
In general, such a result is quite typical for settings where kernels are used; it has also been observed in other works such as \cite{kannan2020data}. In Section~\ref{sec:high_dim}, we propose a dimensionality reduction scheme based on principal component analysis that allows us to obtain tighter bounds when the intrinsic dimensionality of $\bm \gamma$ is considerably smaller than the dimensionality $p$ of the ambient space.

So far, we have obtained the generalization bound for a fixed $\bm x \in \mc X$. In the following theorem, we extend the result in \Cref{cor:generalization_bound} to obtain uniform generalization bounds for all $\bm x \in \mc X$, under the assumption that the feasible set $\mc X$ consists of finitely many points. 
\begin{theorem}[Generalization Bound for a Finite Set $\mc X$]
\label{thm:generalization_bound_finiteX}
Suppose that $\mathcal X$ is a finite set. Then, we have 
\begin{equation*}
\label{eq:generalization_bound_finiteX}
\mb E_{\bm\gm}[\ell(\bm x,\tilde{\bxi})]\leq  \hat{\mb E}_{\bm\gm}[\ell(\bm x,\tilde{\bxi})]+ \sqrt{\frac{\mb V_{\bm\gm}[\ell(\bm x,\tilde{\bxi})]}{ n h_n^pg(\bm\gamma)(1+o(1))}\log\left(\frac{\lvert \mathcal X \rvert}{\delta}\right)}\qquad\forall \bm x\in\mathcal X
\end{equation*}
with probability at least $1-\delta$.
\end{theorem}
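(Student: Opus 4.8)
The plan is to lift the pointwise guarantee of \Cref{cor:generalization_bound} to the whole feasible set by a union bound, exploiting the hypothesis that $\mathcal X$ is finite. Since each individual decision $\bm x\in\mathcal X$ already enjoys an exponentially small deviation probability by \Cref{thm:generalization_bound}, it suffices to sum these probabilities over the finitely many points of $\mathcal X$ and distribute the failure budget $\delta$ evenly among them. The net effect is that the pointwise bound survives essentially unchanged, with $\log(1/\delta)$ replaced by $\log(\lvert\mathcal X\rvert/\delta)$.

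Concretely, for each $\bm x\in\mathcal X$ I would set the threshold
\begin{equation*}
\epsilon_{\bm x}=\sqrt{\frac{\VVrho[\ell(\bm x,\xit)]}{n h_n^p g(\bm\gamma)(1+o(1))}\log\left(\frac{\lvert\mathcal X\rvert}{\delta}\right)}
\end{equation*}
and introduce the associated ``bad event'' $A_{\bm x}=\{\lvert\EErho[\ell(\bm x,\xit)]-\EErhohat[\ell(\bm x,\xit)]\rvert\geq\epsilon_{\bm x}\}$. Substituting $\epsilon_{\bm x}$ into the deviation identity \eqref{eq:generalization_1} makes the exponent collapse exactly to $\log(\delta/\lvert\mathcal X\rvert)$, so that $\mathbb P(A_{\bm x})=\delta/\lvert\mathcal X\rvert$ for every $\bm x$. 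Boole's inequality then yields $\mathbb P\big(\bigcup_{\bm x\in\mathcal X}A_{\bm x}\big)\leq\sum_{\bm x\in\mathcal X}\mathbb P(A_{\bm x})=\lvert\mathcal X\rvert\cdot(\delta/\lvert\mathcal X\rvert)=\delta$. On the complementary event, which occurs with probability at least $1-\delta$, all decisions simultaneously satisfy $\lvert\EErho[\ell(\bm x,\xit)]-\EErhohat[\ell(\bm x,\xit)]\rvert<\epsilon_{\bm x}$; keeping only the one-sided upper inequality gives precisely the claimed bound.

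The one genuinely delicate point is the treatment of the $(1+o(1))$ factor, which in \Cref{thm:generalization_bound} is an asymptotic term attached to each individual $\bm x$ and could in principle differ across decisions. Here finiteness of $\mathcal X$ is exactly what rescues the argument: the maximum of the finitely many vanishing sequences arising from the individual points is itself $o(1)$, so a single uniform error term may be used across all $\bm x\in\mathcal X$. This is also why the proof does not extend verbatim to a continuum of decisions, where one would instead need a covering/chaining argument rather than a naive union bound. Apart from this observation, I expect the derivation to be routine: the union bound is lossless up to the $\log\lvert\mathcal X\rvert$ inflation, and the final reduction to the one-sided inequality is immediate.
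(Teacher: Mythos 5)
Your proposal is correct and follows essentially the same route as the paper, which proves \Cref{thm:generalization_bound_finiteX} by a direct union bound over the finite set $\mathcal X$ applied to \Cref{cor:generalization_bound} with failure budget $\delta/\lvert\mathcal X\rvert$ per point. Your additional observation that the finitely many $(1+o(1))$ terms can be replaced by a single uniform one is a sound refinement of a detail the paper leaves implicit, but it does not change the argument.
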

\begin{proof}
The proof follows from a straightforward application of the union bound to the result obtained in \Cref{cor:generalization_bound}.
\end{proof}

\noindent Note that the bound  \eqref{eq:generalization_bound_finiteX} grows only logarithmically in the cardinality of the feasible set $\mathcal X$ and, hence, at most linearly in the dimension of the decision vector~$\bm x$. 

In our analysis for Theorem \ref{thm:generalization_bound_finiteX}, we assumed that the feasible set $\mathcal{X}$ is finite. In what follows, we show that under additional mild assumptions on the loss function, the result can be extended to the setting where $\mathcal{X}$ is a continuous and bounded set. 

\begin{theorem}[Generalization Bound for a Continuous and Bounded Set $\mc X$]
\label{thm:generalization_bound_continuousX}
Suppose $\mathcal{X}\subset \mathbb{R}^d$ is a bounded subset with finite diameter $D = \sup_{\bm{x},\bm{x}' \in \mathcal{X}} \Vert \bm{x} - \bm{x}' \Vert$. Assume that the loss function $\ell(\bm x,\tilde{\bxi})$  is M-Lipschitz continuous in $\bm{x}$, i.e., there exists a constant $M>0$ such that 
\begin{equation}\label{eq:obj_fun_lips_assum} 
\left| \ell(\bm{x},\bm{\xi}) - \ell(\bm{x}',\bm{\xi}) \right| \leq M \Vert \bm{x} - \bm{x}' \Vert \quad \forall \bm{x},\bm{x}' \in \mathcal{X}\;\forall \bm{\xi}\in \Xi.
\end{equation}
Fix a tolerance level $\eta>0$. Then, with probability at least $1-\delta$, we have 
\begin{equation*}
\mb E_{\bm\gm}[\ell(\bm x,\tilde{\bxi})]\leq \hat{\mb E}_{\bm\gm}[\ell(\bm x,\tilde{\bxi})]+\sqrt{\frac{\mb V_{\bm\gm}[\ell(\bm x,\tilde{\bxi})]}{ n h_n^pg(\bm\gamma)(1+o(1))}\log\left(\frac{\lvert \mathcal X_\eta \rvert}{\delta}\right)}  + M\eta \left(1 + \sqrt{\frac{\log\left(\frac{\lvert \mathcal X_\eta \rvert}{\delta}\right)}{ n h_n^pg(\bs\gamma)(1+o(1))}} \right) \;\forall\bm x\in\mathcal X,
\end{equation*}
where $\lvert \mathcal X_\eta \rvert=O(1) (D/\eta)^d$.
\end{theorem}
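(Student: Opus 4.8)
The plan is to reduce the continuous set $\mathcal{X}$ to a finite net and then invoke the finite-set bound of \Cref{thm:generalization_bound_finiteX}, paying for the discretization through the Lipschitz hypothesis \eqref{eq:obj_fun_lips_assum}. First I would fix the tolerance $\eta>0$ and construct an $\eta$-net $\mathcal{X}_\eta\subseteq\mathcal{X}$, i.e., a finite set such that every $\bm x\in\mathcal{X}$ has a nearest point $\bm x_\eta\in\mathcal{X}_\eta$ with $\Vert \bm x-\bm x_\eta\Vert\le\eta$. A standard volumetric/packing argument for a set of diameter $D$ in $\mathbb{R}^d$ gives $|\mathcal{X}_\eta|=O(1)(D/\eta)^d$, which is exactly the cardinality claimed. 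Applying \Cref{thm:generalization_bound_finiteX} to $\mathcal{X}_\eta$, on a single event of probability at least $1-\delta$ the inequality
\[
\EErho[\ell(\bm x_\eta,\xit)]\le \EErhohat[\ell(\bm x_\eta,\xit)]+\sqrt{\frac{\VVrho[\ell(\bm x_\eta,\xit)]}{nh_n^pg(\bm\gamma)(1+o(1))}\log\!\left(\frac{|\mathcal{X}_\eta|}{\delta}\right)}
\]
holds simultaneously for all net points $\bm x_\eta$ (the union bound over $\mathcal{X}_\eta$ being already absorbed in that theorem), so a single failure event of mass $\delta$ suffices to cover every $\bm x\in\mathcal{X}$.

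The crux is then to transport this net-level bound to an arbitrary $\bm x\in\mathcal{X}$ through its nearest net point $\bm x_\eta$, for which I would establish three stability estimates. Both $\EErho[\cdot]$ and $\EErhohat[\cdot]$ are averaging operators---the former a conditional expectation, the latter a convex combination of the values $\ell(\bm x,\bm\xi^i)$ with the nonnegative Nadaraya--Watson weights---so each inherits $M$-Lipschitzness from \eqref{eq:obj_fun_lips_assum}, yielding $|\EErho[\ell(\bm x,\xit)]-\EErho[\ell(\bm x_\eta,\xit)]|\le M\eta$ and the analogous inequality for $\EErhohat$. The variance term requires more care, and this is the step I expect to be the main obstacle: variance is quadratic and not Lipschitz, so it cannot be perturbed directly. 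The remedy is to work with the conditional standard deviation $\sqrt{\VVrho[\cdot]}$, which is a seminorm and hence obeys the triangle inequality; combined with the almost-sure bound $|\ell(\bm x,\xit)-\ell(\bm x_\eta,\xit)|\le M\eta$, this gives $\sqrt{\VVrho[\ell(\bm x_\eta,\xit)]}\le\sqrt{\VVrho[\ell(\bm x,\xit)]}+M\eta$.

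Finally I would chain the three estimates through the net-level inequality. Replacing $\EErho[\ell(\bm x_\eta,\xit)]$ and $\EErhohat[\ell(\bm x_\eta,\xit)]$ by their $\bm x$-counterparts introduces additive $M\eta$ perturbations, while substituting the standard-deviation estimate inside the square-root factor splits that term into the advertised variance term in $\VVrho[\ell(\bm x,\xit)]$ plus a residual $M\eta\sqrt{\log(|\mathcal{X}_\eta|/\delta)/(nh_n^pg(\bm\gamma)(1+o(1)))}$. Collecting the deterministic and the square-root perturbations assembles into the correction $M\eta(1+\sqrt{\log(|\mathcal{X}_\eta|/\delta)/(nh_n^pg(\bm\gamma)(1+o(1)))})$ of the statement. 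Since $\eta$ enters only through $M\eta$ and through the slowly growing $\log|\mathcal{X}_\eta|=O(d\log(D/\eta))$, the tolerance can ultimately be driven to zero at a rate that keeps the whole correction negligible relative to the leading $O(\sqrt{1/(nh_n^p)})$ term, recovering a genuinely uniform guarantee over the continuous feasible set.
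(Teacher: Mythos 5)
Your proposal is correct and follows essentially the same route as the paper's own proof: an $\eta$-net $\mathcal{X}_\eta$ with $\lvert \mathcal{X}_\eta \rvert = O(1)(D/\eta)^d$, the finite-set guarantee of \Cref{thm:generalization_bound_finiteX} applied uniformly over the net, and Lipschitz transfer of the conditional expectation, its empirical counterpart, and the conditional standard deviation from the nearest net point back to an arbitrary $\bm x \in \mathcal{X}$, yielding the correction $M\eta\bigl(1+\sqrt{\log(\lvert\mathcal{X}_\eta\rvert/\delta)/(nh_n^p g(\bm\gamma)(1+o(1)))}\bigr)$. Your seminorm argument for the standard-deviation stability is in substance identical to the paper's \Cref{lem:lipschitz_loss}, whose proof likewise rests on the semi-norm property of $\sqrt{\EErho[(\cdot)^2]}$ together with the reverse triangle inequality.
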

\noindent We defer the proof of the above theorem to Appendix~\ref{sec:generalization_bound_continuousX}. 

\begin{remark}
An alternative way to construct an empirical estimator for conditional expectation is by using the k-nearest neighbors regression (KNN), which assigns equal weight $1/k$ to the $k$ nearest points of $\bm\gamma$. \cite{bertsimas2019predictions} derive a generalization bound  of  the scheme. They prove that under more restrictive assumptions, such as $\gammat$ is supported on a subset $\Gamma$ of $[0,1]^p$ and there exists a constant $g>0$ such that $\PP(\|\gammat-\bm\gamma\|\leq\epsilon)>g\epsilon^p$ for all $\bm\gamma\in\Gamma$,  the generalization bound of the scheme decays at the rate of $\tilde O(1/n^{1/(2p)})$. However, unlike our bounds in Theorems \ref{thm:generalization_bound_finiteX} and \ref{thm:generalization_bound_continuousX},  their bound is independent of the variance (or risk) of the decisions. Therefore, designing an effective regularization scheme for the KNN-based approach is not immediately obvious. 

\end{remark}

\subsection{Extension to High-Dimensional $\bm \gamma$ }\label{sec:high_dim}

\noindent In this section, we extend our analysis to the setting where the side information $\bm \gamma \in \mb R^p$ is high-dimensional, i.e., where $p$ is large. From the result obtained in~\Cref{cor:generalization_bound}, we observe that the generalization bound decays at the rate $O\big(n^{-\frac{2}{(p+4)}}\big)$, which is slow for decision-making problems with large $p$. 

In real-world settings, however, data often lies on a low-dimensional subspace or manifold. In other words, the intrinsic dimensionality of the data is much smaller than the dimensionality of the ambient space. To take this into consideration, we consider the setting where the side information vector $\bm \gamma$ is drawn from a sub-gaussian\footnote{We refer the reader to \cite{vershynin2010introduction,wainwright2019high} for more details about sub-gaussian random vectors.} probability distribution with sub-gaussian parameter $\sigma$ and lies approximately in a low-dimensional linear subspace $\mc S$ where $\text{dim}(\mc S)= p'\ll p$. Here, we make the assumption that $ \gamma^{\mc S}$---the component of $\bm \gamma$ that lies in the subspace $\mc S$---corresponds to the signal and influences the random cost parameter vector $ \xit$, while its orthogonal component $\bm \gamma^{\mc S^\perp}$ corresponds to the noise term, which, given~$\bm \gamma^{\mc S}$, does not provide any information about~$\xit$. In other words, the random vector $\xit$ is conditionally independent of $\bm \gamma^{\mc S^\perp}$ given $\bm \gamma^{\mc S}$, i.e., $(\bm \xi \indep \bm \gamma^{\mc S^\perp}) \lvert \bm \gamma^{\mc S}$. Thus, the conditional distribution satisfies
\begin{equation*}
f(\bm \xi\lvert \bm \gamma) := f(\bm \xi\lvert \bm \gamma^{\mc S}, \bm \gamma^{\mc S^\perp}) = f(\bm \xi\lvert \bm \gamma^{\mc S}).
\end{equation*}
The sub-gaussian  assumption on $\bm \gamma$ is also non-restrictive and encompasses a wide class of probability distributions, including all multivariate Gaussian distributions and distributions with bounded support. We mention here that a setup similar to ours has been considered in \cite{xu2016statistical} for robust optimization problems in high-dimensions. 

Under the conditional independence assumption, the optimization problem \eqref{eq:conditional_expectation_problem} is equivalent to 
\begin{equation}
\label{eq:conditional_expectation_problem_reduced}
\tag{$\mc{SO}_{\textup{reduced}}$}
\min_{\bm x\in\mathcal X}\;\left\{  \EE_{\bm \rho}[\ell(\bm x,\xit)]:=\EE[\ell(\bm x,\xit)\,|\,\tilde{\bm \rho}=\bm\rho]\right\},
\end{equation}
where $\tilde{\bm \rho}=\proj_{\mc S}(\tilde{\bm\gamma})$ is the projection of the random vector~$\bm \gamma$ onto the subspace $\mc S$. As discussed in Section~\ref{sec:intro}, the exact conditional distribution $f(\bm \xi\lvert \bm \rho)$ is usually not known. If the exact subspace~$\mc S$ is known, the historical data $\{(\bm\rho^1,\bm \xi^1),\ldots,(\bm\rho^n,\bm \xi^n)\}$ can be obtained by projecting the realizations~$\bm \gamma^i$ onto the subspace~$\mc S$. Similar to the~\eqref{eq:nadaraya-watson_problem} formulation developed before for the stochastic optimization problem~\eqref{eq:conditional_expectation_problem}, we propose to approximate the problem~\eqref{eq:conditional_expectation_problem_reduced} using the Nadaraya-Watson kernel regression estimator, as follows:
\begin{equation}
\label{eq:NW_reduced}
\min_{\bm x\in\mathcal X}\;\left\{ \hat{\EE}_{\bm \rho}[\ell(\bm x,\xit)]:=\frac{\sum_{i=1}^n\mathcal K\left(\frac{\bm\rho-\bm\rho^i}{h}\right)\ell(\bm x,\bm\xi^i)}{\sum_{i=1}^n\mathcal K\left(\frac{\bm\rho-\bm\rho^i}{h}\right)}\right\}.
\end{equation}

In general, however, the exact subspace $\mc S$ may also be unknown. Therefore, we develop a dimensionality reduction procedure based on principal component analysis (PCA) that allows us to construct an estimate $\hat{\mc S}$ of the true subspace $\mc S$ in a data-driven manner. 
Our approach is based on the idea of sample splitting, which has been previously proposed in the literature to obtain tighter bounds for high-dimensional problems in other contexts \citep{chaudhuri2009multi,srivastava2019robust,yan2020covariate}. The main idea is to randomly split the observations in the data matrix~$\bm \Gamma=(\bm \gamma^1, \ldots,\bm \gamma^n )^\top$ into two disjoint parts, $\bm \Gamma_1$ and $\bm \Gamma_2$, with the observations in the corresponding parts indexed by sets $\mc I_1$ and $\mc I_2$ with cardinalities $\lvert \mc I_1 \rvert=n_1$ and $\lvert \mc I_2 \rvert=n_2$, respectively. Using the observations in $\bm \Gamma_2$, we construct the sample covariance matrix $ \hat{\bm \Sigma}_2=\tfrac{1}{n_2}\sum_{i \in \mc I_2} (\bm\gamma^i - \bar{{\bm \gamma}}) (\bm\gamma^i- \bar{{\bm \gamma}})^\top $ where $\bar{{\bm \gamma}}=\tfrac{1}{n_2} \sum_{i \in \mc I_2} \bm\gamma^i$ and compute its top~$p'$ eigenvectors $\hat{\bmath U}=[\hat{\bm u}_1,\ldots,\hat{\bm u}_{p'}]^\top \in \mb R^{p'\times p}$, which form a basis  for the estimated subspace~$\hat{\mc S}:=\vspan(\hat{\bmath U})$. Once $\hat{\mc S}$ is determined, the observations in $\bm \Gamma_1$ are projected on to the subspace  to obtain their projections $\hat{\bm \Pi}_1=\bmath \Gamma_1 \hat{\bmath U}^\top$. Sample splitting ensures that the projected points are independent of each other, which is required for the application of moderate deviations theory to obtain the theoretical guarantees. In practice, however, this step can be usually skipped and the subspace $\hat{\mc S}$ can be estimated from the entire data matrix. Next, we let 
\begin{equation}\label{eq:NW_reduced_main}
\tag{$\mc{NW}^{\textup{red}}_{\textup{est}}$}
\hat{\EE}_{\hat{\bm \rho}}[\ell(\bm x,\xit)]:=\frac{\displaystyle\sum_{i\in \mc I_1}\mathcal K\left(\frac{\hat{\bm\rho}-\hat{\bm\rho}^i}{h}\right)\ell(\bm x,\bm\xi^i)}{\displaystyle \sum_{i\in \mc I_1}\mathcal K\left(\frac{\hat{\bm\rho}-\hat{\bm\rho}^i}{h}\right)},
\end{equation}
to denote the NW estimator defined in \eqref{eq:NW_reduced} based on the dimensionality reduction procedure detailed above. We delineate the generalization bound for the reduced estimator in the following proposition whose proof is deferred to  Appendix~\ref{sec:proof_generalizationbound-highdim}.  

\begin{proposition}[Generalization Bound for \ref{eq:NW_reduced_main} with Finite Set $\mc X$]\label{prop:generalizationbound-highdim}
Suppose $\mc X$ is a finite set, $n_1$ and $n_2$ are sufficiently large and $n_2^{-1/2} / h_{n_1} < 1$. Then, we have
\begin{multline*}
\lvert \EE_{\bm \gamma}[\ell(\bm x,\xit)]-\hat{\EE}_{\hat{\bm \rho}}[\ell(\bm x,\xit)] \leq \sqrt{ \frac{ \VVrho[\ell(\bs x,\xit)] }{ n_1 h_{n_1}^{p'}g(\bs\rho)(1+o(1))}\log \left(\frac{5 n_1\lvert \mc X \rvert}{\delta} \right)} \\  \qquad \quad \qquad\qquad\qquad + \frac{8}{h} \frac{4\sigma C}{\lambda_{p'} - \lambda_{p'+1}} \sqrt{\frac{p'}{n_2} \log \left(\frac{10 n_1\lvert \mc X \rvert}{\delta}\right)} \left(\sqrt{p}+\sqrt{\frac{1}{2}\log \left(\frac{5 n_1\lvert \mc X \rvert}{\delta}\right)}\right)\quad\forall \bm x\in\mc X
\end{multline*}
with probability at least $1-\delta$. Here, $C>0$ is a constant that depends on the sub-gaussian parameter $\sigma$, $\lambda_{p'}$ is the $p'$-th largest eigenvalue of the true covariance matrix $\bs \Sigma$ of ${\gammat}$.
\end{proposition}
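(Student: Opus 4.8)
The plan is to split the total error, via the triangle inequality, into a statistical estimation error incurred by the low-dimensional NW estimator built on the \emph{true} subspace and a perturbation error caused by having to \emph{estimate} the subspace from data. Writing $\hat{\EE}_{\bm \rho}[\ell(\bm x,\xit)]$ and $\EE_{\bm \rho}[\ell(\bm x,\xit)]$ for the quantities defined in \eqref{eq:NW_reduced} and \eqref{eq:conditional_expectation_problem_reduced} that use the true projections $\bm\rho^i=\proj_{\mc S}(\bm\gamma^i)$ and query point $\bm\rho=\proj_{\mc S}(\bm\gamma)$, I would start from
\[
\bigl\lvert \EErho[\ell(\bm x,\xit)] - \hat{\EE}_{\hat{\bm\rho}}[\ell(\bm x,\xit)]\bigr\rvert \le \underbrace{\bigl\lvert \EErho[\ell(\bm x,\xit)] - \hat{\EE}_{\bm\rho}[\ell(\bm x,\xit)]\bigr\rvert}_{\textup{(I)}} + \underbrace{\bigl\lvert \hat{\EE}_{\bm\rho}[\ell(\bm x,\xit)] - \hat{\EE}_{\hat{\bm\rho}}[\ell(\bm x,\xit)]\bigr\rvert}_{\textup{(II)}}.
\]
For term (I) I would invoke the conditional-independence assumption to identify $\EErho[\ell(\bm x,\xit)]=\EE_{\bm \rho}[\ell(\bm x,\xit)]$, so that (I) is exactly the generalization error of a $p'$-dimensional NW estimator evaluated on the i.i.d.\ projected samples $\{(\bm\rho^i,\bm\xi^i)\}_{i\in\mc I_1}$. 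Because sample splitting makes these points independent of the subspace estimate obtained from $\mc I_2$, I can apply \Cref{thm:MDP} and the finite-$\mc X$ union bound of \Cref{thm:generalization_bound_finiteX} with ambient dimension $p'$, sample size $n_1$, and scaled density $g(\bm\rho)$. Allocating a share of the failure budget $\delta$ to this event produces the first summand, the $\log(5n_1\lvert\mc X\rvert/\delta)$ factor reflecting the union bound over $\mc X$ together with the split of $\delta$ across the several concentration events used below.

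Term (II) is the crux. Both $\hat{\EE}_{\bm\rho}$ and $\hat{\EE}_{\hat{\bm\rho}}$ are convex combinations of the \emph{common} values $\ell(\bm x,\bm\xi^i)\in[0,1]$ (by assumption~\ref{as1}) with weight vectors determined by $\mathcal K$, so (II) is at most half the $\ell_1$-distance between the two normalized weight vectors; it therefore suffices to bound the change in the kernel weights when the projections move from $\bm\rho^i$ to $\hat{\bm\rho}^i$. Since $\mathcal K(\bm\theta)=\tfrac1Z e^{-\|\bm\theta\|_2}$ is $1$-Lipschitz in $\bm\theta$, each unnormalized weight changes by at most $\tfrac1h(\|\hat{\bm\rho}-\bm\rho\|+\|\hat{\bm\rho}^i-\bm\rho^i\|)$; propagating this through the normalization and using that the estimator is a weighted average of values in $[0,1]$ yields the prefactor $8/h$. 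I would then bound the per-sample projection error: after aligning $\hat{\bm U}$ with the true basis by the optimal orthogonal rotation, $\|\hat{\bm\rho}^i-\bm\rho^i\|\le\|\hat{\bm U}-\bm U\|_2\,\|\bm\gamma^i\|$, and control the two factors separately. The subspace error $\|\hat{\bm U}-\bm U\|_2$ is handled by a Davis--Kahan $\sin\Theta$ argument, giving $\|\hat{\bm U}-\bm U\|_2\lesssim\|\hat{\bm\Sigma}_2-\bm\Sigma\|_2/(\lambda_{p'}-\lambda_{p'+1})$, followed by a sub-gaussian covariance-concentration bound on the $\mc I_2$ sample; this supplies the middle factor $\frac{4\sigma C}{\lambda_{p'}-\lambda_{p'+1}}\sqrt{\frac{p'}{n_2}\log(10n_1\lvert\mc X\rvert/\delta)}$. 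The sample norms are controlled uniformly through sub-gaussian norm concentration and a union bound over the $n_1$ points, $\max_{i\in\mc I_1}\|\bm\gamma^i\|\lesssim\sigma(\sqrt p+\sqrt{\tfrac12\log(5n_1\lvert\mc X\rvert/\delta)})$, which supplies the trailing factor. The scaling hypothesis $n_2^{-1/2}/h_{n_1}<1$ ensures the induced perturbation is small enough that the NW denominator stays bounded away from degeneracy, legitimizing the convex-combination perturbation estimate.

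To conclude I would combine (I) and (II) and take a single union bound over the finite set $\mc X$ and over the covariance-concentration and norm-concentration events, allocating $\delta$ so that the logarithmic arguments $5n_1\lvert\mc X\rvert/\delta$ and $10n_1\lvert\mc X\rvert/\delta$ emerge. The \emph{main obstacle} is term (II), and within it the stability of the normalized NW ratio under weight perturbation: the denominator $\sum_{i\in\mc I_1}\mathcal K((\hat{\bm\rho}-\hat{\bm\rho}^i)/h)$ can be small, so one must carefully track how the normalization responds to the perturbation and exploit $n_2^{-1/2}/h_{n_1}<1$ to keep the ratio under control. The secondary difficulty is purely bookkeeping---coordinating the moderate-deviations event for (I), the Davis--Kahan/covariance-concentration event for the subspace, and the uniform norm bound over $\mc I_1$ into one coherent $\delta$-budget.
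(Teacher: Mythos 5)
Your decomposition, your handling of term (I), and your projection-error machinery (Davis--Kahan for the subspace, sub-gaussian covariance concentration on the $\mc I_2$ sample, a uniform sub-gaussian norm bound over the $n_1$ points, and the reliance on sample splitting for independence) all coincide with the paper's proof. The one genuine gap is in your central step, the bound on term (II): an additive Lipschitz perturbation of the unnormalized kernel weights does not survive the normalization. If the query point is far from all samples, every term of the denominator $\sum_{j}Z^{-1}e^{-\eta_j/h}$ is exponentially small, and an additive error of order $\tau/h$ on each unnormalized weight can exceed the weights themselves by an arbitrary factor, so no useful bound on the normalized weights follows. The hypothesis $n_2^{-1/2}/h_{n_1}<1$ does not repair this: it controls the size of $\tau/h$, not the denominator, so your claim that it keeps ``the NW denominator bounded away from degeneracy'' is not something this condition delivers --- and indeed the paper never needs, nor has, any lower bound on the denominator.

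The paper's proof closes exactly this hole by exploiting a multiplicative, scale-free property of the exponential kernel in place of additive Lipschitzness. Writing $k_h(\eta)=Z^{-1}e^{-\eta/h}$, $\eta_i=\|\bm\rho-\bm\rho^i\|$, $\eta_i'=\|\hat{\bm\rho}-\hat{\bm\rho}^i\|$, and using $|\eta_i-\eta_i'|\le 2\tau$ for all $i$, monotonicity gives
\[
\frac{k_h(\eta_i')}{\sum_{j} k_h(\eta_j')} \;\le\; \frac{k_h(\eta_i-2\tau)}{\sum_{j} k_h(\eta_j+2\tau)} \;=\; \frac{k_h(\eta_i)}{\sum_{j} k_h(\eta_j)}\,e^{4\tau/h},
\]
and symmetrically a lower bound with factor $e^{-4\tau/h}$, because the additive shift $\pm 2\tau$ factors out of $e^{-\eta/h}$ exactly and the common factor cancels between numerator and denominator. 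Each normalized weight therefore moves by at most the original weight times $\bigl(e^{4\tau/h}-1\bigr)$, and since the weights sum to one, term (II) is bounded by $e^{4\tau/h}-1$ \emph{irrespective of how small the denominator is}. Only at this point does $n_2^{-1/2}/h_{n_1}<1$ enter: it ensures $4\tau/h\le 1$ for large $n_1,n_2$, whence $e^{4\tau/h}-1\le 4\tau/h+(4\tau/h)^2\le 8\tau/h$, which is the source of the $8/h$ prefactor. Replacing your additive perturbation step with this multiplicative comparison is the missing idea; the remainder of your outline, including the $\delta$-budget over the $n_1+1$ projection events and the union bound over $\mc X$ that produces the $5n_1\lvert\mc X\rvert/\delta$ and $10n_1\lvert\mc X\rvert/\delta$ arguments of the logarithms, matches the paper.
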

\noindent From the proposition, we see that if we choose $h_{n_1}=O(1/n_1^{1/(p'+4)})$ and the sizes of $\bs \Gamma_1$ and $\bs \Gamma_2$ such that $n_1=\alpha n$ and $n_2=(1-\alpha)n$ for some $0<\alpha<1$, then the  requirement $n_2^{-1/2} / h_{n_1} < 1$ holds for sufficiently large $n$, and the generalization bound decays at the rate ${O}\big(n^{-\frac{2}{(p'+4)}}\log(n_1)\big)$. 
Thus, by adopting the proposed dimensionality reduction procedure, the generalization bound no longer depends on the original dimension $p$ of the ambient space. Instead, it is a function of only the intrinsic dimensionality $p'$ of the side information vector $\bs \gamma$. Hence, the adverse impact on the generalization bound associated with the curse of dimensionality is mitigated.    

When $\gammat$ is bounded, i.e., $\Vert \gammat \Vert \leq \gamma_{\max}$ almost surely, we obtain a sharper bound without the $\log(n_1)$ factor. In this case, the error decays at a faster rate ${O}\big(n^{-\frac{2}{(p'+4)}}\big)$. 
\begin{corollary}[Generalization Bound for \ref{eq:NW_reduced_main} with bounded covariates]\label{prop:generalizationbound-highdim2}
Consider the same setting as in \Cref{prop:generalizationbound-highdim} and assume that $\gamma$ is a bounded random variable where $\Vert \gammat \Vert \leq \gamma_{\max}$ almost surely. Then, we have
\begin{multline*}
\lvert \EE_{\bs\gamma}[\ell(\bs x,\xit)]-\hat{\EE}_{\hat{\bs\rho}}[\ell(\bs x,\xit)] \rvert\leq  \sqrt{ \frac{ \VVrho[\ell(\bs x,\xit)] }{ n_1 h_{n_1}^{p'}g(\bs\rho)(1+o(1))}\log \left(\frac{2\lvert \mc X \rvert}{\delta} \right)}\\
\qquad+ \frac{8}{h} \frac{C}{\lambda_{p'} - \lambda_{p'+1}} \sqrt{\frac{p'}{n_2} \log \left(\frac{4\lvert \mc X \rvert}{\delta}\right)} \gamma_{\max}\quad\forall \bm x\in\mc X
\end{multline*}
with probability at least $1-\delta$. Here, $C>0$ is a constant that depends on the sub-gaussian parameter $\sigma$, $\lambda_{p'}$ is the $p'$-th largest eigenvalue of the true covariance matrix $\bs \Sigma$ of ${\gammat}$.
\end{corollary}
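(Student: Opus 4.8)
The plan is to retrace the argument behind \Cref{prop:generalizationbound-highdim} almost verbatim and to isolate the single point at which the sub-gaussian tail of $\gammat$ was invoked, replacing it with the deterministic envelope $\norm{\gammat}\le\gamma_{\max}$. Recall that the starting point is the decomposition obtained by inserting the NW estimator $\hat{\EE}_{\bs\rho}$ built from the \emph{true} projections $\bs\rho^i=\bmath U\bs\gamma^i$ onto $\mc S$:
\[
\EE_{\bs\gamma}[\ell]-\hat{\EE}_{\hat{\bs\rho}}[\ell]=\underbrace{\big(\EE_{\bs\rho}[\ell]-\hat{\EE}_{\bs\rho}[\ell]\big)}_{\text{statistical error}}+\underbrace{\big(\hat{\EE}_{\bs\rho}[\ell]-\hat{\EE}_{\hat{\bs\rho}}[\ell]\big)}_{\text{perturbation error}},
\]
where the conditional independence $(\xit\indep\bs\gamma^{\mc S^\perp})\,|\,\bs\gamma^{\mc S}$ guarantees $\EE_{\bs\gamma}[\ell]=\EE_{\bs\rho}[\ell]$. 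The statistical term is a genuine $p'$-dimensional NW error, so I would bound it through \Cref{cor:generalization_bound} applied to the projected samples together with a union bound over the finite set $\mc X$; this produces the first summand.

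First I would handle the perturbation term. Because the kernel arguments depend only on the projector $P_{\hat{\mc S}}=\hat{\bmath U}^\top\hat{\bmath U}$ through $\norm{\hat{\bmath U}(\bs\gamma-\bs\gamma^i)}=\norm{P_{\hat{\mc S}}(\bs\gamma-\bs\gamma^i)}$, the discrepancy with the true projector is controlled by $\norm{P_{\hat{\mc S}}-P_{\mc S}}_2\,\norm{\bs\gamma-\bs\gamma^i}$, which is rotation-invariant and sidesteps the sign/eigenspace ambiguity of $\hat{\bmath U}$. Since the exponential kernel is $(1/h)$-Lipschitz, this perturbation in the kernel arguments propagates, after lower-bounding the denominator, to the $\tfrac{8}{h}$ prefactor. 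The Davis--Kahan $\sin\Theta$ theorem bounds $\norm{P_{\hat{\mc S}}-P_{\mc S}}_2\le\norm{\hat{\bm\Sigma}_2-\bm\Sigma}_2/(\lambda_{p'}-\lambda_{p'+1})$, and the sub-gaussian concentration of the sample covariance $\hat{\bm\Sigma}_2$ from the $n_2$ points in $\mc I_2$ supplies the factor $\tfrac{C}{\lambda_{p'}-\lambda_{p'+1}}\sqrt{(p'/n_2)\log(\cdot)}$; this step is identical to \Cref{prop:generalizationbound-highdim} and is untouched by boundedness.

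The key substitution is at the data-magnitude factor $\norm{\bs\gamma-\bs\gamma^i}$. In \Cref{prop:generalizationbound-highdim} the quantity $\max_{i\in\mc I_1}\norm{\bs\gamma^i}$ was controlled by the sub-gaussian norm bound $\norm{\bs\gamma^i}\le 4\sigma\big(\sqrt p+\sqrt{\tfrac12\log(\cdot)}\big)$ combined with a union bound over the $n_1$ points of $\mc I_1$, and it is precisely this union bound that injected the factor $n_1$ into the logarithms of \emph{both} summands. Under $\norm{\gammat}\le\gamma_{\max}$ almost surely I would instead use $\max_{i\in\mc I_1}\norm{\bs\gamma-\bs\gamma^i}\le\gamma_{\max}$ (up to an absorbed constant) deterministically. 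This simultaneously replaces $4\sigma(\sqrt p+\cdots)$ by $\gamma_{\max}$ and eliminates the data-point union bound, so that only two stochastic events survive---the NW concentration over $\mc X$ and the covariance concentration---which, allocated $\delta/2$ and $\delta/2$, collapse the arguments to $\log(2\lvert\mc X\rvert/\delta)$ and $\log(4\lvert\mc X\rvert/\delta)$.

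The main obstacle is bookkeeping rather than conceptual: I must verify that the $n_1$-point union bound on $\max_{i\in\mc I_1}\norm{\bs\gamma^i}$ is the \emph{only} source of the factor $n_1$ in \Cref{prop:generalizationbound-highdim}, so that excising it is the sole modification required. Concretely, I would confirm that neither the lower bound on the kernel denominator nor the covariance-concentration step independently reintroduces an $n_1$-dependence, and that splitting $\delta$ across exactly the two remaining events reproduces the stated constants $2$ and $4$ inside the logarithms. Once this is checked, the two surviving high-probability bounds recombine into the claimed inequality.
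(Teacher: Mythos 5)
Your proposal is correct and follows essentially the same route as the paper's proof: the same decomposition into a $p'$-dimensional NW statistical error plus a projection-perturbation term, the same Davis--Kahan plus sub-gaussian covariance concentration for the $8\tau/h$ bound, and the same key observation that the almost-sure bound $\Vert\gammat\Vert\le\gamma_{\max}$ makes the length control deterministic, leaving only the covariance event and the NW event to share $\delta$ and thereby removing the $n_1$ factor from the logarithms, exactly reproducing the constants $2$ and $4$. The only (cosmetic) difference is that you phrase Davis--Kahan in terms of the projector distance $\Vert P_{\hat{\mc S}}-P_{\mc S}\Vert_2$, whereas the paper works with $\Vert \bmath U\bmath O-\hat{\bmath U}\Vert_{\fr}$ for a change-of-basis matrix $\bmath O$; these are equivalent up to constants and your version indeed sidesteps the eigenbasis ambiguity the paper handles via $\bmath O$.
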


\section{A Conditional Standard Deviation Regularization Scheme}
\label{sec:regularization_scheme}
The generalization bounds obtained in Theorems~\ref{thm:generalization_bound_finiteX} and \ref{thm:generalization_bound_continuousX} imply that the out-of-sample errors are negligible if the conditional standard deviation  $\sqrt{\VVrho[\ell(\bm x,\xit)]}$ is small. This suggests that a regularization scheme involving the term would ensure a solution with a strong generalization bound.
However, as we do not have access to the {true} conditional variance, we propose to utilize the \emph{empirical} conditional variance as a surrogate
\begin{equation}
\label{eq:empirical_variance}
\VVrhohat[\ell(\bm x,\xit)]:=\EErhohat[(\ell(\bm x,\xit)-\EErhohat[\ell(\bm x,\xit)])^2]=\EErhohat[\ell(\bm x,\xit)^2]-\EErhohat[\ell(\bm x,\xit)]^2. 
\end{equation}
This setting gives rise to the regularized NW approximation
\begin{equation}\label{eq:regularized_problem}
\tag{$\mathcal R$$\mathcal{NW}$}
\min_{\bm x\in\mathcal X}\;\EErhohat[\ell(\bm x,\xit)]+\lambda\sqrt{\VVrhohat[\ell(\bm x,\xit)]},
\end{equation}
where $\lambda\geq 0$ is a tuning parameter that controls the degree of regularization. We  point out here that a similar formulation on the variance-based regularization scheme has been previously proposed and analyzed in the empirical risk minimization literature~\citep{maurer2009empirical,duchi2019variance} for the unconditional setting, where the true (unconditional) probability distribution is approximated by the empirical distribution. 
\subsection{Suboptimality Bounds}
In this section, we aim to establish the properties of the optimal solutions to problem~\eqref{eq:regularized_problem}. We first show that replacing the true conditional variance with its empirical estimate \eqref{eq:empirical_variance} does not significantly weaken the generalization bound derived in Section \ref{sec:generalization_bound}. 
\begin{proposition}\label{prop:bound_sample_stddev}
Fix a tolerance level $\tau>0$. For any fixed $\bm x \in \mc X$, we have
\begin{equation}\label{eq:bound_sample_stddev}
\displaystyle\left|\sqrt{\VVrho[\ell(\bm x,\xit)]}-\sqrt{\VVrhohat[\ell(\bm x,\xit)]}\right|\leq \tau 
+\sqrt{ \frac{\log\left(\frac{1+2/\tau}{\delta}\right)}{ n h_n^pg(\bm\gamma)(1+o(1))}},
\end{equation}
with probability at least $1-\delta$.
\end{proposition}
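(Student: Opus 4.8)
The plan is to exploit the variational (bias--variance) representation of the conditional standard deviation together with the fixed-decision guarantee of Corollary~\ref{cor:generalization_bound}, applied not to $\ell(\bm x,\cdot)$ itself but to a family of shifted-and-squared losses. Writing $s=\sqrt{\VVrho[\ell(\bm x,\xit)]}$ and $\hat s=\sqrt{\VVrhohat[\ell(\bm x,\xit)]}$, I would first record the identities $\EErho[(\ell(\bm x,\xit)-c)^2]=s^2+(c-m)^2$ and $\EErhohat[(\ell(\bm x,\xit)-c)^2]=\hat s^2+(c-\hat m)^2$, valid for every scalar $c$, where $m=\EErho[\ell(\bm x,\xit)]$ and $\hat m=\EErhohat[\ell(\bm x,\xit)]$. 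In particular $s^2=\min_c\EErho[(\ell(\bm x,\xit)-c)^2]$ and $\hat s^2=\min_c\EErhohat[(\ell(\bm x,\xit)-c)^2]$, and both minimizers $m,\hat m$ lie in $[0,1]$ by assumption~\ref{as1} (since $\hat m$ is a convex combination of values in $[0,1]$).

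Next I would introduce a finite $\tau$-net $\mc C\subset[0,1]$ with $|\mc C|\le 1+2/\tau$ points such that every point of $[0,1]$ is within $\tau$ of some grid point, and for each $c\in\mc C$ consider the auxiliary loss $L_c(\bm\xi)=(\ell(\bm x,\bm\xi)-c)^2$, which takes values in $[0,1]$ whenever $c\in[0,1]$ and $\ell\in[0,1]$. Each $L_c$ inherits the regularity required by Theorem~\ref{thm:MDP}, so Corollary~\ref{cor:generalization_bound} applies to it; a union bound over the finitely many $c\in\mc C$ (each at level $\delta/|\mc C|$) yields, with probability at least $1-\delta$, the simultaneous estimates $|\EErho[L_c]-\EErhohat[L_c]|\le\sqrt{\VVrho[L_c]\,A}$ for all $c\in\mc C$, where $A=\frac{\log(|\mc C|/\delta)}{nh_n^p g(\bm\gamma)(1+o(1))}$ and the finitely many $o(1)$ terms are merged into a single one. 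The decisive ingredient is to bound the variance appearing here by the \emph{self-bounding} inequality $\VVrho[L_c]\le\EErho[L_c^2]\le\EErho[L_c]$, which holds because $L_c\in[0,1]$ forces $L_c^2\le L_c$.

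For the upper bound $\hat s\le s+\tau+\tfrac12\sqrt A$ I would take $c^\star\in\mc C$ with $|m-c^\star|\le\tau$ and chain $\hat s^2\le\EErhohat[L_{c^\star}]\le\EErho[L_{c^\star}]+\sqrt{\EErho[L_{c^\star}]\,A}$, then substitute $\EErho[L_{c^\star}]=s^2+(m-c^\star)^2\le s^2+\tau^2=:u^2$ and complete the square via $u^2+u\sqrt A\le(u+\tfrac12\sqrt A)^2$, so that $\hat s\le u+\tfrac12\sqrt A\le s+\tau+\tfrac12\sqrt A$. For the reverse inequality I would take $\hat c^\star\in\mc C$ with $|\hat m-\hat c^\star|\le\tau$ and start from $s^2\le\EErho[L_{\hat c^\star}]=:P$; the union bound and self-bounding give $P\le\EErhohat[L_{\hat c^\star}]+\sqrt{P\,A}\le\hat s^2+\tau^2+\sqrt{P\,A}$, a quadratic inequality in $\sqrt P$ whose larger root yields $\sqrt P\le\hat s+\tau+\sqrt A$, hence $s\le\sqrt P\le\hat s+\tau+\sqrt A$. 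Combining the two one-sided bounds and using $|\mc C|\le 1+2/\tau$ gives the claimed estimate.

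The main obstacle is getting the rate right. A naive application of $|\sqrt a-\sqrt b|\le\sqrt{|a-b|}$ to the squared losses would only produce the inferior fourth-root rate $O((nh_n^p)^{-1/4})$. Avoiding this requires both (i) the self-bounding estimate $\VVrho[L_c]\le\EErho[L_c]$, so that the per-point fluctuation scales with $\sqrt{\EErho[L_c]}$ rather than a constant, and (ii) the one-sided, minimizer-based chaining of the previous paragraph (rather than controlling $|\sqrt{\EErho[L_c]}-\sqrt{\EErhohat[L_c]}|$ at a generic grid point), which keeps the bound finite and of the correct order even as $s\downarrow 0$. A minor technical point to verify is that the $o(1)$ terms from Corollary~\ref{cor:generalization_bound} may be taken uniform over the fixed finite net $\mc C$, which is immediate because $|\mc C|$ does not grow with $n$.
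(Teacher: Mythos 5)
Your proposal is correct and follows essentially the same route as the paper's proof: both apply the moderate-deviations generalization bound to the shifted-squared losses $(\ell(\bm x,\cdot)-t)^2$, use the self-bounding inequality $\VVrho[(\ell(\bm x,\xit)-t)^2]\leq\EErho[(\ell(\bm x,\xit)-t)^2]$ together with a completing-the-square step to obtain the correct $O((nh_n^p)^{-1/2})$ rate, and then combine the variational characterization $\sqrt{\VVrho[\ell(\bm x,\xit)]}=\min_{t\in[0,1]}\sqrt{\EErho[(\ell(\bm x,\xit)-t)^2]}$ with a union bound over a grid of at most $1+2/\tau$ shift values. The only cosmetic difference is in the discretization bookkeeping: the paper first proves a two-sided per-$t$ lemma and invokes $1$-Lipschitz continuity of $t\mapsto\sqrt{\EErho[(\ell(\bm x,\xit)-t)^2]}$ at the grid minimizers, whereas you exploit the exact bias--variance identity at net points chosen near the true and empirical means and chain the two one-sided bounds directly --- both yield the same $\tau$ discretization term and the same logarithmic dependence on the net size.
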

	
\noindent The proof of Proposition~\ref{prop:bound_sample_stddev} can be found in Appendix~\ref{app:bound_sample_stddev}. We remark that the tolerance level $\tau$ can be made small without significantly increasing the square root term on the right-hand side of~\eqref{eq:bound_sample_stddev} as the latter displays merely a logarithmic dependence in~$\tau$.

We next analyze the suboptimality bound resulting from solving the regularized  problem \eqref{eq:regularized_problem}. We first assume that the feasible set $\mathcal X$ is finite even though its cardinality can be exponential in the problem dimensions. Let $\hat{\bm x}$ be a minimizer of the regularized problem and $\bm x^\star$ be a minimizer of the true stochastic optimization problem \eqref{eq:conditional_expectation_problem}. 
\begin{theorem}[Suboptimality Bound for a Finite Set $\mc X$]\label{thm:optimality_bound}
Fix a tolerance level $\tau>0$. Then, for some scaling of the regularization parameter  $\lambda= O\left( 1/ \sqrt{ n h_n^pg(\bm\gamma) }\right)$, we have
\begin{equation}\label{eq:suboptimality_bound}
\displaystyle\EErho[\ell(\hat{\bm  x},\xit)]
\displaystyle\leq\displaystyle\EErho[\ell({\bm  x}^\star,\xit)]+\left(\sqrt{\VVrho[\ell({\bm  x}^\star,\xit)]}+\tau\right) \sqrt{\frac{4\log\left(\frac{6|\mathcal X|}{\delta}\right)}{ n h_n^pg(\bm\gamma) (1+o(1))}}+{ \frac{2\log\left(\frac{6|\mathcal X|(1+2/\tau)}{\delta}\right)}{ n h_n^pg(\bm\gamma)(1+o(1))}},
\end{equation}
with probability at least $1-\delta$.
\end{theorem}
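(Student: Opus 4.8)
The plan is to sandwich the true objective value $\EErho[\ell(\hat{\bm x},\xit)]$ of the regularized minimizer between quantities evaluated at $\bm x^\star$, using the optimality of $\hat{\bm x}$ for \eqref{eq:regularized_problem} as the pivot and controlling the gap between true and empirical conditional moments with the two concentration results already in hand --- the generalization bound for a finite feasible set (Theorem~\ref{thm:generalization_bound_finiteX}) and the conditional-standard-deviation bound (Proposition~\ref{prop:bound_sample_stddev}).

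First I would establish a uniform one-sided inequality of the form
\begin{equation*}
\EErho[\ell(\bm x,\xit)]\leq \EErhohat[\ell(\bm x,\xit)]+\lambda\sqrt{\VVrhohat[\ell(\bm x,\xit)]}+\textup{(error)}\qquad\forall \bm x\in\mathcal X,
\end{equation*}
valid with high probability. To do so I would start from Theorem~\ref{thm:generalization_bound_finiteX}, which bounds $\EErho-\EErhohat$ by a multiple of $\sqrt{\VVrho[\ell(\bm x,\xit)]}$, and then invoke Proposition~\ref{prop:bound_sample_stddev}, union-bounded over the finitely many points of $\mathcal X$, to replace the \emph{true} conditional standard deviation $\sqrt{\VVrho}$ by its empirical counterpart $\sqrt{\VVrhohat}$ at the cost of the additive terms $\tau$ and a logarithmic correction. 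Reading off the multiplier of $\sqrt{\VVrhohat}$ fixes the scaling $\lambda=\Theta\!\left(\sqrt{\log(|\mathcal X|/\delta)/(n h_n^p g(\bm\gamma))}\right)$, which is exactly the claimed $O\!\big(1/\sqrt{n h_n^p g(\bm\gamma)}\big)$.

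Second, I would evaluate this inequality at $\bm x=\hat{\bm x}$ and invoke the optimality of $\hat{\bm x}$ for \eqref{eq:regularized_problem}, namely $\EErhohat[\ell(\hat{\bm x},\xit)]+\lambda\sqrt{\VVrhohat[\ell(\hat{\bm x},\xit)]}\leq \EErhohat[\ell(\bm x^\star,\xit)]+\lambda\sqrt{\VVrhohat[\ell(\bm x^\star,\xit)]}$, to pass from $\hat{\bm x}$ to $\bm x^\star$ on the right-hand side. Finally, at the single point $\bm x^\star$ I would apply the \emph{reverse} directions of Corollary~\ref{cor:generalization_bound} and Proposition~\ref{prop:bound_sample_stddev} to upper bound the empirical mean and empirical standard deviation of $\bm x^\star$ by their true values $\EErho[\ell(\bm x^\star,\xit)]$ and $\sqrt{\VVrho[\ell(\bm x^\star,\xit)]}$ plus errors of the same order. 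Collecting the two contributions multiplying $\sqrt{\VVrho[\ell(\bm x^\star,\xit)]}$ --- one from the generalization step at $\hat{\bm x}$ and one from the choice of $\lambda$ --- yields the factor $\sqrt{4\log(6|\mathcal X|/\delta)/(n h_n^p g(\bm\gamma))}$, the $\tau$-terms combine into its $\tau$-multiple, and the leftover logarithmic pieces are absorbed into the $O\!\big(\log(\cdot)/(n h_n^p g(\bm\gamma))\big)$ residual. The constants $6$ and $4$ arise from allocating the total failure budget $\delta$ evenly across the finitely many invocations of the two concentration inequalities and from bounding each cross term via $\sqrt{\log a}\,\sqrt{\log b}\le \max\{\log a,\log b\}$.

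The main obstacle is the first step: the generalization bound naturally produces the \emph{true} conditional variance $\VVrho[\ell(\bm x,\xit)]$, whereas the tractable regularizer in \eqref{eq:regularized_problem} --- and hence the only quantity that the optimality of $\hat{\bm x}$ can control --- is the \emph{empirical} variance $\VVrhohat[\ell(\bm x,\xit)]$. Bridging the two uniformly over $\mathcal X$ is exactly where Proposition~\ref{prop:bound_sample_stddev} enters, and care must be taken that the tolerance parameter $\tau$ is threaded consistently through both the uniform step at $\hat{\bm x}$ and the single-point step at $\bm x^\star$, so that it appears only linearly (multiplying the $\sqrt{\log(\cdot)/(n h_n^p g(\bm\gamma))}$ factor) in the final bound. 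A secondary technical point is that all invoked bounds hold only asymptotically, carrying the $(1+o(1))$ factors and requiring $n$ large; since $\mathcal X$ is finite, these $o(1)$ terms are uniform over $\mathcal X$ and can be absorbed without difficulty.
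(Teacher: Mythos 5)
Your proposal is correct and follows essentially the same route as the paper's own proof: a uniform bound over the finite set $\mathcal X$ in which Proposition~\ref{prop:bound_sample_stddev} converts the true conditional standard deviation into its empirical counterpart (this is precisely the lemma the paper proves first), followed by the optimality pivot $\hat{\mb E}_{\bm\gm}[\ell(\hat{\bm x},\xit)]+\lambda\sqrt{\VVrhohat[\ell(\hat{\bm x},\xit)]}\leq \hat{\mb E}_{\bm\gm}[\ell(\bm x^\star,\xit)]+\lambda\sqrt{\VVrhohat[\ell(\bm x^\star,\xit)]}$ and single-point reverse applications of Corollary~\ref{cor:generalization_bound} and Proposition~\ref{prop:bound_sample_stddev} at $\bm x^\star$. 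Your accounting of the constants, the choice $\lambda=\Theta\bigl(\sqrt{\log(|\mathcal X|/\delta)/(n h_n^p g(\bm\gamma))}\bigr)$, and the handling of the cross term all match the paper's derivation.
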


\noindent The proof of the theorem is deferred to Appendix~\ref{sec:thm_optimality_bound}. Theorem~\ref{thm:optimality_bound} asserts that if there is an optimal solution~$\bs x^\star$ of the stochastic  problem~\eqref{eq:conditional_expectation_problem} that yields a cost with negligible  conditional variance, then the regularized solution $\hat{\bs  x}$ will converge to this optimal solution at a rate of $O({1}/{({n h_n^p})})$. 

In our analysis for Theorem \ref{thm:optimality_bound}, we assumed that the feasible set $\mathcal{X}$ is finite. In the next theorem, under the assumption of a Lipschitz loss function, we extend the result to obtain a similar suboptimality bound for the case where the solution set $\mathcal{X}$ is continuous and bounded.

\begin{theorem}[Suboptimality Bound for a Continuous and Bounded Set $\mc X$]\label{cor:continuousX}
Suppose $\mathcal{X}$ is a bounded subset of $\mathbb{R}^d$ with finite diameter $D = \sup_{\bm{x},\bm{x}' \in \mathcal{X}} \Vert \bm{x} - \bm{x}' \Vert$ and the cost function $\ell$ is Lipschitz continuous in $\bm{x}$, i.e., it satisfies condition~\eqref{eq:obj_fun_lips_assum}. Then,  for some scaling of the regularization parameter  $\lambda= O\left( 1/ \sqrt{ n h_n^pg(\bm\gamma) }\right)$ and any $\tau, \eta > 0$, we have 
\begin{equation*}
\begin{aligned}
\displaystyle\EErho[\ell(\hat{\bm  x},\xit)]
\displaystyle&\leq\displaystyle\EErho[\ell(\x^\star,\xit)]+(2+\lambda)M\eta+\left(\sqrt{\VVrho[\ell(\x^\star,\xit)]}+\tau\right) \sqrt{\frac{4\log\left(\frac{ O(1) (D/\eta)^d}{\delta}\right)}{ n h_n^pg(\bm\gamma) (1+o(1))}}\\
&\ \ \ +{ \frac{2\log\left(\frac{(1+2/\tau)O(1) (D/\eta)^d}{\delta}\right)}{ n h_n^pg(\bm\gamma)(1+o(1))}},
\end{aligned}
\end{equation*}
with probability at least $1-\delta$.
\end{theorem}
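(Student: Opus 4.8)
The plan is to combine the variance-regularized suboptimality analysis of \Cref{thm:optimality_bound} with the covering argument behind \Cref{thm:generalization_bound_continuousX}. First I would fix an $\eta$-net $\mc X_\eta$ of the bounded set $\mc X$, so that every $\bm x\in\mc X$ admits a net point $\bm x'\in\mc X_\eta$ with $\Vert\bm x-\bm x'\Vert\le\eta$ and $\lvert\mc X_\eta\rvert = O(1)(D/\eta)^d$. On this finite net I would invoke, through a single union bound, both the one-sided generalization bound of \Cref{cor:generalization_bound} and the empirical-standard-deviation bound of \Cref{prop:bound_sample_stddev}, so that with probability at least $1-\delta$ these inequalities hold simultaneously at every point of $\mc X_\eta$, with the $\log(1/\delta)$ terms replaced by $\log(\lvert\mc X_\eta\rvert/\delta)$ (up to the appropriate constants and the $(1+2/\tau)$ factor inherited from the finite-set analysis).

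The key enabling observation is that not only the mean functional but also the standard-deviation functional is $M$-Lipschitz in $\bm x$. Indeed, since $\sqrt{\VVrhohat[\,\cdot\,]}$ is the $L^2$-seminorm under the nonnegative, normalized NW weights, the triangle inequality gives $\lvert\sqrt{\VVrhohat[\ell(\bm x,\xit)]}-\sqrt{\VVrhohat[\ell(\bm x',\xit)]}\rvert\le\sqrt{\VVrhohat[\ell(\bm x,\xit)-\ell(\bm x',\xit)]}\le\sqrt{\EErhohat[(\ell(\bm x,\xit)-\ell(\bm x',\xit))^2]}\le M\Vert\bm x-\bm x'\Vert$, where the final step uses the pointwise Lipschitz assumption \eqref{eq:obj_fun_lips_assum}; the identical argument applies to $\sqrt{\VVrho[\,\cdot\,]}$ and to both mean functionals. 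This lets me transfer each high-probability inequality established on $\mc X_\eta$ to an arbitrary point of $\mc X$ at the additive cost of $M\eta$ per transferred functional.

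With these ingredients I would replay the proof of \Cref{thm:optimality_bound}. Let $\hat{\bm x}$ minimize the regularized problem over $\mc X$ and let $\hat{\bm x}'\in\mc X_\eta$ be a nearest net point. Starting from $\EErho[\ell(\hat{\bm x},\xit)]\le\EErho[\ell(\hat{\bm x}',\xit)]+M\eta$, I apply the net generalization bound at $\hat{\bm x}'$ and then replace $\sqrt{\VVrho}$ by $\sqrt{\VVrhohat}$ via \Cref{prop:bound_sample_stddev}. Choosing $\lambda=O(1/\sqrt{nh_n^pg(\bm\gamma)})$ no smaller than the resulting coefficient $\sqrt{\log(\lvert\mc X_\eta\rvert/\delta)/(nh_n^pg(\bm\gamma))}$ lets the cross term be absorbed into $\lambda\sqrt{\VVrhohat[\ell(\hat{\bm x}',\xit)]}$; transferring this regularized empirical objective from $\hat{\bm x}'$ back to $\hat{\bm x}$ costs $M\eta$ on the mean and $\lambda M\eta$ on the standard-deviation term, after which the optimality of $\hat{\bm x}$ over $\mc X$ bounds it by the same regularized objective at the deterministic point $\bm x^\star$. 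The three $M\eta$ contributions---the initial mean transfer, plus the mean and standard-deviation transfers of the regularized objective---combine into the $(2+\lambda)M\eta$ term. Finally I bound $\EErhohat[\ell(\bm x^\star,\xit)]$ and $\sqrt{\VVrhohat[\ell(\bm x^\star,\xit)]}$ by their true counterparts using \Cref{cor:generalization_bound} and \Cref{prop:bound_sample_stddev} at the fixed point $\bm x^\star$ (absorbing the single-point constants into the $\lvert\mc X_\eta\rvert$ factors), which produces the $\big(\sqrt{\VVrho[\ell(\bm x^\star,\xit)]}+\tau\big)$ term at rate $O(1/\sqrt{nh_n^pg(\bm\gamma)})$ together with the residual $O(1/(nh_n^pg(\bm\gamma)))$ term.

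I expect the main obstacle to be the $\eta$-transfer bookkeeping rather than any single hard estimate. One must verify the Lipschitz property of the standard-deviation functional---the nonobvious step, since the mean is trivially Lipschitz but the variance regularizer is not linear---and then track precisely which functionals are transferred between $\hat{\bm x}$ and $\hat{\bm x}'$ so that the coefficient of $M\eta$ emerges as exactly $(2+\lambda)$ and the logarithmic factors carry the correct $\lvert\mc X_\eta\rvert=O(1)(D/\eta)^d$ dependence. Coordinating the lower bound on $\lambda$ needed to absorb the cross term with the required upper scaling $\lambda=O(1/\sqrt{nh_n^pg(\bm\gamma)})$ is the other point demanding care.
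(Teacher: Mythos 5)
Your proposal is correct and takes essentially the same route as the paper's proof: an $\eta$-net of $\mathcal X$ combined with the $M$-Lipschitz continuity of the mean and standard-deviation functionals (the paper's \Cref{lem:lipschitz_loss}, established by the same seminorm/reverse-triangle argument you give), a union bound over the net, and a replay of the finite-set suboptimality analysis of \Cref{thm:optimality_bound}, with the transfer costs accumulating to exactly $(2+\lambda)M\eta$. The only cosmetic difference is that you invoke optimality of $\hat{\bm x}$ over all of $\mathcal X$ directly, whereas the paper routes through the minimizer over the net and includes $\bm x^\star$ in $\mathcal X_\eta$; both orderings of the Lipschitz transfers produce the identical bound.
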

\noindent The proof of the theorem is deferred to  Appendix~\ref{sec:thm_optimality_bound_continuosX}.

\subsection{A Mixed-Integer Second-Order Cone Programming Formulation}\label{sec:exact_formulation}

In general, the exact problem \eqref{eq:regularized_problem} is intractable because of the non-convexity of the regularization term in the objective function. In this section, we consider the case where the loss function is piecewise linear convex and $\mathcal{X}$ is second-order conic representable, and we derive a mixed-integer SOCP formulation for \eqref{eq:regularized_problem}. Although the problem remains hard to solve, reasonably large instances of the problem can be solved using off-the-shelf solvers such as Gurobi and CPLEX. Based on our derivation, we also show that, particularly for the case where the loss function is linear, the problem is efficiently solvable as a SOCP.   

\begin{proposition}
Suppose the loss function $\ell(\bm x,\xit) = \max_{j\in[m]} \bs{a}_j(\bs{x})^\top \xit + b_j$ is piecewise linear convex in $\x$ and the feasible set $\mc X$ is second-order conic representable. Let $\overline{w}_{i}={\mathcal K(\frac{\bm\gamma-\bm\gamma^i}{h})}/({\sum_{i=1}^n\mathcal K(\frac{\bm\gamma-\bm\gamma^i}{h})})$ denote the kernel weight associated with the $i$-th observation, then the problem \eqref{eq:regularized_problem} is solvable as the following mixed-integer second-order cone program:
\begin{equation}\label{eq:MISOCP}
\begin{array}{cll}
\textup{min}  &\displaystyle \overline{\bs{w}}^\top \bs{\nu} + \lambda\rho\\
\textup{s.t.} &\displaystyle \left(\sqrt{\overline{w}_{1}}(\nu_1-t),\ldots,\sqrt{\overline{w}_{n}}(\nu_n -t),\rho\right)\in\SOC(n+1), \\
&\displaystyle     \bs{a}_j(\bs{x})^\top \bs{\xi}^i  + b_j   \leq \nu_i  & \forall i\in[n]\; \forall j\in[m], \\
&\displaystyle   \bs{a}_j(\bs{x})^\top \bs{\xi}^i  + b_j + M (1-z_{ij})  \geq \nu_i  & \forall i\in[n]\; \forall  j\in[m], \\&\displaystyle   \sum_{j\in [m]} z_{ij} = 1  & \forall i\in[n],\\
&\displaystyle \bs x\in\mathcal X, \; \bs \nu\in\RR^n,\;  \rho\in\RR,\; t\in\RR, \; \bm  z\in \{0,1\}^{n\times m}. 
\end{array}
\end{equation}
where $M > 0$ is a sufficiently large constant. Under the assumption that $\ell(\bm{x},\bm{\xi})$ takes value in the interval $[0,1]$, it is sufficient to set $M=1$.
\end{proposition}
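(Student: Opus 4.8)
The plan is to reformulate the regularized objective $\EErhohat[\ell(\x,\xit)]+\lambda\sqrt{\VVrhohat[\ell(\x,\xit)]}$ by introducing epigraph variables that linearize the piecewise-linear loss and by rewriting the empirical standard deviation as a second-order cone constraint. First I would introduce, for each observation $i\in[n]$, a scalar $\nu_i$ representing $\ell(\x,\bm\xi^i)=\max_{j\in[m]}\bs a_j(\x)^\top\bm\xi^i+b_j$. Since the loss appears through its empirical mean $\EErhohat[\ell]=\sum_i \overline w_i\,\ell(\x,\bm\xi^i)=\overline{\bs w}^\top\bs\nu$ and through the empirical variance, the substitution $\nu_i=\ell(\x,\bm\xi^i)$ is exactly what the ``$\leq$'' and the big-$M$ ``$\geq$'' constraints together with $\sum_j z_{ij}=1$ enforce: the upper inequalities force $\nu_i\geq\bs a_j(\x)^\top\bm\xi^i+b_j$ for all $j$, hence $\nu_i\geq\ell(\x,\bm\xi^i)$, while the selection variable $z_{ij}$ picks the active piece and the big-$M$ inequality forces $\nu_i\leq\bs a_{j^\star}(\x)^\top\bm\xi^i+b_{j^\star}$ for the selected $j^\star$, so together $\nu_i=\ell(\x,\bm\xi^i)$ at optimality. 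Because the objective coefficient $\overline w_i\geq 0$ is nonnegative, one must verify this two-sided pinning is genuinely needed (a pure relaxation $\nu_i\geq\ell$ would not correctly reproduce the variance term), which is precisely why the integer variables enter.

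Next I would handle the regularizer. The key identity is $\VVrhohat[\ell(\x,\xit)]=\sum_i\overline w_i\,(\ell(\x,\bm\xi^i))^2-\big(\sum_i\overline w_i\,\ell(\x,\bm\xi^i)\big)^2$, which in the $\bs\nu$ variables equals $\sum_i\overline w_i\nu_i^2-(\overline{\bs w}^\top\bs\nu)^2$. The plan is to show this equals $\min_{t\in\RR}\sum_i\overline w_i(\nu_i-t)^2$, with the minimum attained at the weighted mean $t^\star=\overline{\bs w}^\top\bs\nu$ (using $\mathbf e^\top\overline{\bs w}=1$). Introducing the free variable $t$ and an epigraph variable $\rho$ with $\rho\geq\sqrt{\sum_i\overline w_i(\nu_i-t)^2}$, we recognize this as the second-order cone membership $\big(\sqrt{\overline w_1}(\nu_1-t),\ldots,\sqrt{\overline w_n}(\nu_n-t),\rho\big)\in\SOC(n+1)$. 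Minimizing $\lambda\rho$ (with $\lambda\geq 0$) drives $\rho$ down to $\sqrt{\min_t\sum_i\overline w_i(\nu_i-t)^2}=\sqrt{\VVrhohat[\ell(\x,\xit)]}$, so the reformulated objective $\overline{\bs w}^\top\bs\nu+\lambda\rho$ coincides with the original at optimality.

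The remaining routine checks are that the feasible set $\mc X$ being second-order conic representable preserves the SOCP/MISOCP structure, and that every piece $\bs a_j(\x)^\top\bm\xi^i+b_j$ is affine in $\x$ (given $\bm\xi^i$ fixed), so all the inequality constraints are linear in the decision variables $(\x,\bs\nu,\rho,t,\bm z)$. For the tightening claim, under assumption \ref{as1} the loss lies in $[0,1]$; since at optimality $\nu_i\in[0,1]$ and each affine piece evaluated at $\bm\xi^i$ also lies in $[0,1]$, the gap $\nu_i-\big(\bs a_j(\x)^\top\bm\xi^i+b_j\big)$ never exceeds $1$, which justifies setting $M=1$ while keeping the big-$M$ constraint valid.

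The main obstacle I anticipate is the joint argument that the epigraph relaxation of the variance and the two-sided pinning of $\nu_i$ are simultaneously tight at an optimal solution. In particular one must rule out a spurious minimizer where some $\nu_i$ is slack (strictly above $\ell(\x,\bm\xi^i)$): increasing $\nu_i$ raises the mean term $\overline{\bs w}^\top\bs\nu$ monotonically but changes the variance nonmonotonically, so one cannot argue tightness from the mean alone. The clean way around this is to note that the big-$M$ and selection constraints make the substitution $\nu_i=\ell(\x,\bm\xi^i)$ \emph{forced} rather than merely optimal, so feasibility of the MISOCP is in exact bijection with feasibility of \eqref{eq:regularized_problem} for each fixed $\x$, and the objectives agree pointwise after the $t$- and $\rho$-minimization; the equivalence of optimal values then follows immediately.
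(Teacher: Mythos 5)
Your proposal follows essentially the same route as the paper's proof: an epigraph variable $\rho$ for the standard deviation, the variational characterization $\VVrhohat[\tilde\chi]=\min_{t\in\RR}\EErhohat[(\tilde\chi-t)^2]$ yielding the $\SOC(n+1)$ constraint, auxiliary variables $\nu_i=\ell(\bm x,\bm\xi^i)$, and big-$M$ linearization with the binaries $z_{ij}$ --- and you even make explicit the forcing argument (that $\nu_i$ is pinned to $\ell(\bm x,\bm\xi^i)$ at \emph{every} feasible point, not merely at optimality, which is genuinely needed since the variance is nonmonotone in $\nu_i$) that the paper leaves implicit. One small caution: your justification of $M=1$ asserts that each affine piece $\bs a_j(\bm x)^\top\bm\xi^i+b_j$ lies in $[0,1]$, which does not follow from \ref{as1} (only their maximum does), so strictly $M=1$ suffices only when every piece stays within one unit below the active piece --- a caveat that the paper's own statement glosses over as well.
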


\begin{proof}
To obtain the formulation, we first introduce the epigraphical variable $\rho$ to \eqref{eq:regularized_problem} to bring the conditional standard deviation term into the constraint: 
\begin{equation*}
\begin{array}{cl}
\textup{min} &\displaystyle \EErhohat[\ell(\bs x,\xit)] + \lambda\rho\\
\textup{s.t.} & \sqrt{\VVrhohat[\ell(\bs x,\xit)]}\leq \rho, \\
 &\displaystyle \bs x\in\mathcal X, \; \rho\in\RR.
\end{array}
\end{equation*}
Then, we have that the above formulation is equivalent to   
\begin{equation*}
\begin{array}{cl}
\textup{min} &\displaystyle \left(\sum_{i=1}^n \overline{w}_{i} \cdot \ell(\bs x,\bs\xi^i) \right) + \lambda\rho \\
\textup{s.t.} &\displaystyle \sqrt{ \sum_{i=1}^n \overline{w}_{i} \cdot \left( \ell(\bs x,\bs\xi^i) -t\right)^2  } \leq \rho, \\
&\displaystyle \bs x\in\mathcal X, \; \rho\in\RR,\; t\in\RR,
\end{array}
\end{equation*}
where (as in the proof of Proposition \ref{prop:bound_sample_stddev}) we make use of the fact that for any random variable $\tilde\chi$, $\VVrhohat[\tilde\chi]=\arg \min_{t\in \mb R} \EErhohat[(\tilde\chi-t)^2]$. Next, we introduce the auxiliary variables $\nu_i= \ell(\bs x,\bs\xi^i)= \max_{j\in[m]} \bs{a}_j(\bs{x})^\top \bs{\xi}^i  + b_j $ for all $i\in [n]$. Using the Big-M notation, we can linearize the resulting non-convex constraints to obtain the final formulation 
\begin{equation*}
\begin{array}{cll}
\textup{min} &\displaystyle \overline{\bs{w}}^\top  \bs{\nu} + \lambda\rho\\
\textup{s.t.} &\displaystyle \sqrt{ \sum_{i=1}^n \overline{w}_{i} \cdot \left( \nu_i -t\right)^2  } \leq \rho, \\
&\displaystyle     \bs{a}_j(\bs{x})^\top \bs{\xi}^i  + b_j   \leq \nu_i  & \forall i\in[n]\;  j\in[m], \\
&\displaystyle   \bs{a}_j(\bs{x})^\top \bs{\xi}^i  + b_j + M (1-z_{ij})  \geq \nu_i  & \forall i\in[n]\;  j\in[m], \\&\displaystyle   \sum_{j\in [m]} z_{ij} = 1  & \forall i\in[n],\\
&\displaystyle \bs x\in\mathcal X, \; \bs \nu\in\RR^n,\;  \rho\in\RR,\; t\in\RR, \; \bm  z\in \{0,1\}^{n\times m}.
\end{array}
\end{equation*}
This completes the proof. 
\end{proof}

Due to the binary decision variables $\bm  z\in \{0,1\}^{n\times m}$, the above formulation is a mixed-integer second-order cone program (MISOCP), provided that $\X$ is second-order conic representable with binary/integer variables. If the loss function $\ell(\bm x,\bm\xi)$ is simply a linear function of $\x$, i.e.,  $m=1$, then the formulation reduces to a second-order cone program (SOCP), which is efficiently solvable in polynomial time using interior-point methods. We state this result formally in the following corollary. 
\begin{corollary}\label{coro:SOCP}
Suppose the loss function $\ell(\bm x,\bm\xi)$ is a linear function of $\x$ and the feasible set $\mc X$ is second-order conic representable, then the problem \eqref{eq:regularized_problem} can  equivalently be reformulated as the second-order cone program
\begin{equation*}
\begin{array}{cll}
\textup{min} &\displaystyle \overline{\bm w}^\top \bs{\nu} + \lambda\rho\\
\textup{s.t.} &\displaystyle \left(\sqrt{\overline{w}_{1}}(\nu_1-t),\ldots,\sqrt{\overline{w}_{n}}(\nu_n -t),\rho\right)\in\SOC(n+1) , \\
&\displaystyle  \bs{a}(\bs{x})^\top \bs\xi^i + b =\nu_i  & \forall i\in[n], \\
&\displaystyle \bs x\in\mathcal X, \; \bs \nu\in\RR^n,\; \rho\in\RR,\; t\in\RR. \;
\end{array}
\end{equation*}
\end{corollary}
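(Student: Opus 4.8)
The plan is to recognize Corollary \ref{coro:SOCP} as the special case $m=1$ of the preceding proposition and to verify that the integer variables and big-$M$ constraints then become vacuous. When $\ell(\bs x,\xit)$ is linear in $\x$ we may write it as the single affine piece $\ell(\bs x,\xit)=\bs a(\bs x)^\top\xit+b$, so the index set $[m]$ is a singleton. First I would instantiate the MISOCP \eqref{eq:MISOCP} with $m=1$: the cardinality constraint $\sum_{j\in[1]}z_{i1}=1$ forces $z_{i1}=1$ for every $i\in[n]$, which eliminates all binary decision variables from the formulation.

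With $z_{i1}=1$ fixed, the two families of big-$M$ inequalities collapse: the upper inequality reads $\bs a(\bs x)^\top\bs\xi^i+b\leq\nu_i$, while the lower inequality reads $\bs a(\bs x)^\top\bs\xi^i+b+M(1-1)\geq\nu_i$, i.e. $\bs a(\bs x)^\top\bs\xi^i+b\geq\nu_i$. Together these pin down the auxiliary variable to the equality $\nu_i=\bs a(\bs x)^\top\bs\xi^i+b$ stated in the corollary, so the big-$M$ constant disappears entirely. The objective $\overline{\bm w}^\top\bs\nu+\lambda\rho$ and the second-order cone constraint $(\sqrt{\overline{w}_1}(\nu_1-t),\ldots,\sqrt{\overline{w}_n}(\nu_n-t),\rho)\in\SOC(n+1)$ are carried over unchanged, which yields precisely the claimed formulation.

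Alternatively --- and this is the route I would actually write out if the proposition were not already available --- one can rederive the formulation directly, which is in fact cleaner because the maximum in the loss disappears. Introduce the epigraphical variable $\rho$ for $\sqrt{\VVrhohat[\ell(\bs x,\xit)]}$ and invoke the variational identity $\VVrhohat[\tilde\chi]=\min_{t\in\mb R}\EErhohat[(\tilde\chi-t)^2]$ used in the proof of Proposition \ref{prop:bound_sample_stddev}; this rewrites the standard-deviation constraint as $\sqrt{\sum_{i=1}^n\overline{w}_i(\nu_i-t)^2}\leq\rho$ after substituting $\nu_i=\ell(\bs x,\bs\xi^i)$. Because $\ell$ is affine, the definition $\nu_i=\bs a(\bs x)^\top\bs\xi^i+b$ is already a linear equality and needs no big-$M$ linearization, and the norm inequality is exactly the $\SOC(n+1)$ membership above.

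There is no genuine obstacle here beyond bookkeeping: the only point that deserves care is confirming that the second-order cone inequality faithfully encodes the empirical standard deviation jointly with the free variable $t$, i.e. that minimizing the program over $t$ recovers $\sqrt{\VVrhohat[\ell(\bs x,\xit)]}$ rather than some larger quantity. This follows from the variational characterization of the variance already established earlier. Finally I would note that, since all binary variables have been removed, the resulting program contains only linear equalities, a single second-order cone constraint, and the SOC-representable description of $\mathcal X$, so it is a bona fide SOCP solvable in polynomial time by interior-point methods.
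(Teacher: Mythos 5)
Your proposal is correct and follows essentially the same route as the paper: the paper obtains the corollary exactly by specializing the MISOCP of the preceding proposition to $m=1$ (where the binary variables and big-$M$ constraints become vacuous and the two inequalities collapse to the equality $\nu_i=\bs a(\bs x)^\top\bs\xi^i+b$), and your direct rederivation via the epigraphical variable and the variational identity $\VVrhohat[\tilde\chi]=\min_{t\in\mb R}\EErhohat[(\tilde\chi-t)^2]$ is precisely the argument already used in the proposition's proof. No gaps.
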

Next, based on our discussion above, we obtain the SOCP formulation for the generic portfolio optimization problem with side information and present the results of a small example.

\subsection{A Portfolio Optimization Example}\label{sec:application_portfolio}

In this section, we investigate the performance of our proposed regularized NW approximation on the portfolio optimization problem described in \Cref{ex:finance}. We compare the performances of the LDR approach and our regularization scheme. As a direct application of \Cref{coro:SOCP}, our regularization scheme can be reformulated as a SOCP. For both the proposed regularization scheme and the LDR approach, the details of the formulations are provided in \Cref{app:portOptExample}.

\begin{example}\label{ex:portOpt_compare}[A Three-Asset Portfolio]
Consider the portfolio optimization problem in \Cref{ex:finance}. We compare our regularized NW approximation from \Cref{coro:SOCP} with the state-of-the-art linear decision rule (LDR) formulation for the problem proposed by \cite{brandt2009parametric}~and~\cite{baziergeneralization}. We first empirically test the proposed regularized NW approximation and the LDR formulation, and see how they perform against these optimal returns. 

\begin{figure}[t]
  \centering
\begin{subfigure}[t]{.45\textwidth}
\centering
      \includegraphics[width=1.0\textwidth]{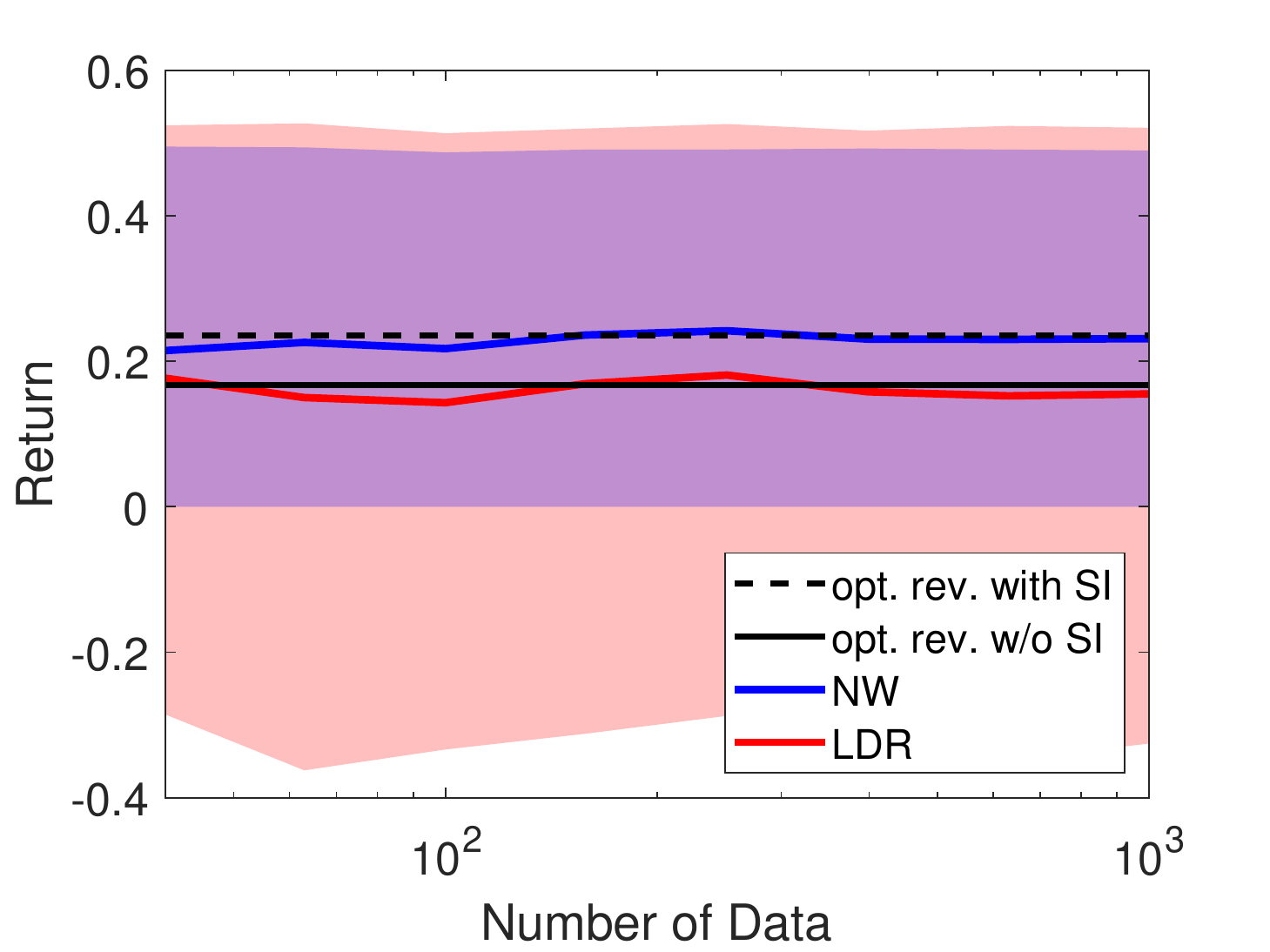}
  \caption{$\gamma$'s are sampled from uniform distribution within $[-1,1]$}
  \label{fig:experiment1}
\end{subfigure}
\begin{subfigure}[t]{.45\textwidth}
\centering
      \includegraphics[width=1.0\textwidth]{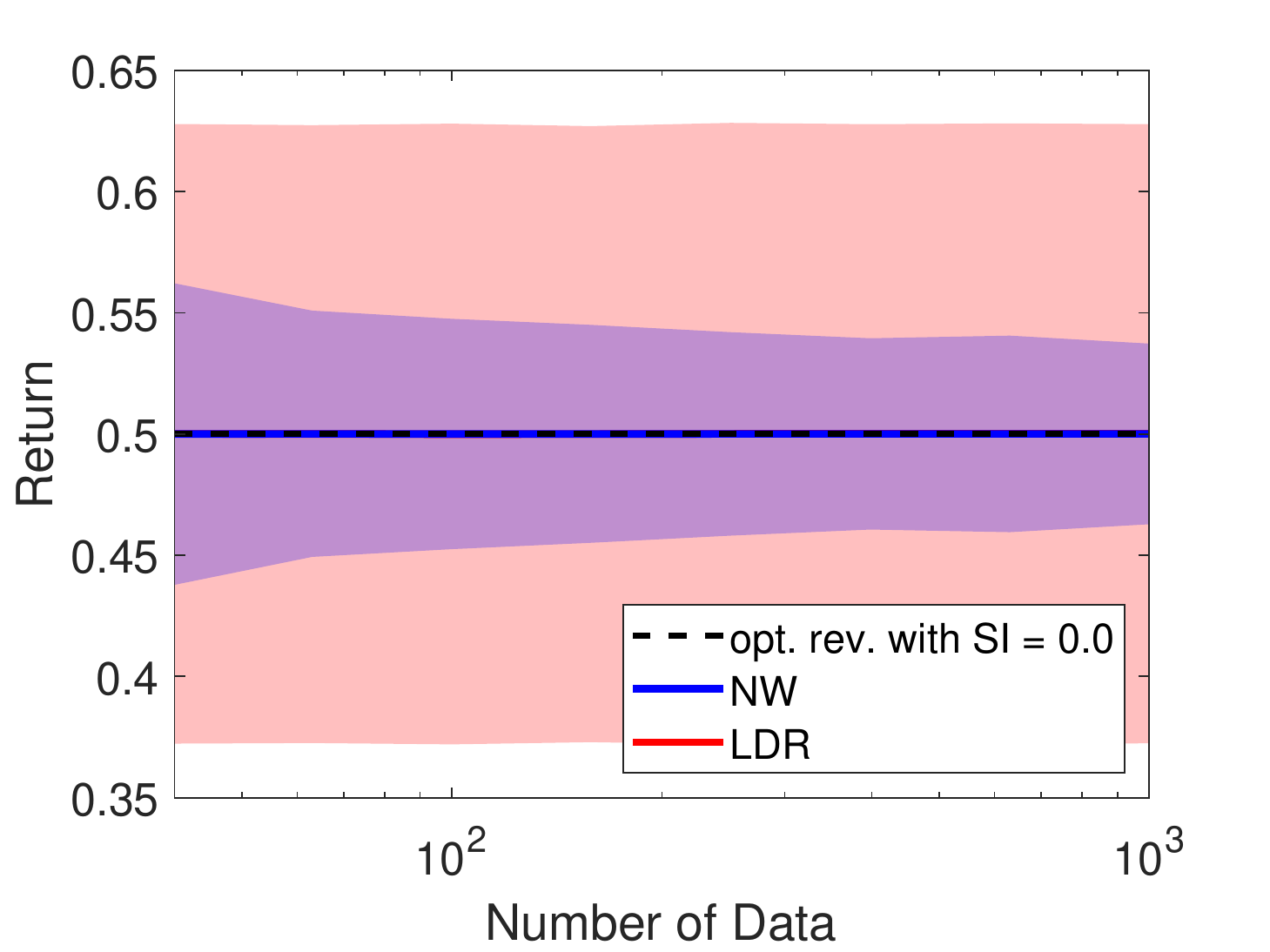}
  \caption{$\gamma = 0.0$}
  \label{fig:experiment2}
\end{subfigure}
\caption{Out-of-sample portfolio returns of different approaches over $300 \; \gamma$'s for each $n$. The black dot line is the optimal expected return with the side information $\gamma$ given. The black solid line is the optimal expected return without considering side information. The blue and red soild lines are the average returns of our proposed model and the LDR formulation, respectively. The shaded region for each color records the returns between the $10$th and $90$th percentile of the returns of the corresponding approach.}
\label{fig:experiment}
\end{figure}

Figure \ref{fig:experiment1} shows the out-of-sample returns of the two approaches, as well as the  optimal expected portfolio returns with and without consideration of the side information, respectively. We find that our proposed approach substantially outperforms LDR in terms of both return and risk. Even though the two approaches attempt to exploit the side information when generating their portfolios, the NW approach is more effective in  capitalizing  the information as it consistently generates higher expected returns. 
We also observe that the NW returns have significantly lower variability. This is not entirely surprising because the regularization term encourages a portfolio with lower standard deviation. 
Figure \ref{fig:experiment2} depicts the out-of-sample returns for a fixed covariate $\gamma = 0$. In this case, the conditional expected return of each risky asset is $0.5$ and investing in any convex combination of the two risky assets yields the optimal expected portfolio return. Since Asset 1 and Asset 2 have perfect negative correlation, the NW approach tends to allocate an equal weight to both assets so that the individual noise terms $\tilde\epsilon_1$ and $\tilde\epsilon_2$ are neutralized in the resulting portfolio. 

As expected, the returns of the NW approximation converge fast to the best expected portfolio return as the data size grows. On the other hand, we observe that LDR disappointingly performs as if it were oblivious to the side information, even with large data size. This phenomenon can be explained analytically as follows. For any fixed parameters $x_1$, $x_2$, and $y$, the expected portfolio return is given by
\begin{equation*}
\begin{array}{rl}
\displaystyle \EE\left[\sum_{i=1}^2\tilde\xi_i(\tilde\gamma)(x_i + \tilde\gamma \cdot y) \right]    &=\displaystyle \;\; \EE\left[ \left(\frac{1}{2} - \tilde\gamma^2 + 0.1\cdot \tilde\epsilon_1 \right) (x_1 + \tilde\gamma \cdot y) + \left(\frac{1}{2} - \tilde\gamma^2 + 0.1\cdot \tilde\epsilon_2 \right) (x_2 + \tilde\gamma \cdot y) \right]  \\
&=\displaystyle \;\; \EE\left[ \left(\frac{1}{2} - \tilde\gamma^2 \right) (x_1 + x_2) + 2 \left(\frac{1}{2}\tilde\gamma \cdot y - \tilde\gamma^3 \cdot y  \right)  \right]  \\
&=\displaystyle \;\; \left(\frac{1}{2} - \frac{1}{3} \right) (x_1 + x_2) + 0 \;\; = \;\;  \dfrac{x_1 + x_2}{6},
\end{array}
\end{equation*}
where the second equality holds because the random variables $\tilde\epsilon_1$ and $\tilde\epsilon_2$ are independent of $\tilde\gamma$ and have mean zero, while the penultimate equality follows from the identities $\EE[\tilde\gamma^2]=1/3$ and $\EE[\tilde\gamma]=\EE[\tilde\gamma^3]=0$. Since the constraint $x_1+x_2\leq 1$ is imposed in the formulation, the LDR approach will never generate an expected portfolio return greater than $1/6$. This result affirms our observation that LDR indeed performs as poorly as the model that disregards the side information. 
\end{example}

From the above example, we demonstrate that the LDR approach could fail miserably at exploiting the side information, even on a simple setting. On the other hand, the proposed regularized NW approximation is highly effective at leveraging the side information and can generate a remarkably higher average return with minimal risks.

\section{Connections with Distributionally Robust Optimization}\label{sec:DRO_formulation}

In this section, we consider the setting where the loss function $\ell(\x,\bxi)$ is a general (not necessarily a  piecewise linear) convex function of $\x$ for all $\bm \xi \in \Xi$. Leveraging ideas from~\cite{duchi2019variance}, we obtain a distributionally robust optimization (DRO) formulation, which is a tractable approximation for our proposed variance regularization scheme. 
In the following proposition, we derive the DRO formulation and show that for large $n$, the  DRO formulation is equivalent to the proposed variance regularized formulation.

\begin{proposition}\label{prop:DRO}
Let $\overline{w}_{i} = \mathcal K_{h}(\bm\gamma-\bm\gamma^i)/\sum_{j=1}^n\mathcal K_{h}(\bm\gamma-\bm\gamma^j)$, $i\in[n]$, denote the empirical weights obtained from NW regression, and
$\hat{\mb P}_{\bm \gamma}=\sum_{i=1}^n \overline{w}_{i} \delta_{\bm \xi^i}$ be the empirical conditional distribution.
For any $\bm{x}\in\mathcal{X}$, we have
\begin{multline*}
\EErhohat[\ell(\bm x,\xit)]+ \left( \lambda\sqrt{\VVrhohat[\ell(\bm x,\xit)]} - \lambda^2 \right)_+ \leq 
\max_{\mb P\in\mathcal P_\lambda(\hat{\mb P}_{\bm \gamma})}  \mb E_{\mb P} [\ell(\bm x,\xit)] \leq \EErhohat[\ell(\bm x,\xit)] + \lambda\sqrt{\VVrhohat[\ell(\bm x,\xit)]} ,
\end{multline*}
where 
\begin{equation}\label{eq:modified_chi}
\mathcal P_\lambda(\hat{\mb P}_{\bm \gamma})= \left\{\mb P = \sum_{i=1}^n w_i \delta_{\bm \xi^i} ~:~ \sum_{i=1}^n \dfrac{(w_{i}-\overline{w}_i)^2}{\overline{w}_i} \leq \frac{\lambda^2}{2}, \; \bm w \in \Delta^n 
\right\}
\end{equation}
is a modified $\chi^2$ ambiguity set constructed around the empirical conditional distribution.
In particular, if $\hat{\mathbb{V}}_\gamma [\ell(\bm x,\xit) ] \geq \lambda^2$,  then
\begin{equation*}
\max_{\mb P\in\mathcal P_\lambda(\hat{\mb P}_{\bm \gamma})} \mb E_{\mb P} [\ell(\bm x,\xit)] = \EErhohat[\ell(\bm x,\xit)] + \lambda\sqrt{\VVrhohat[\ell(\bm x,\xit)]} .
\end{equation*}
\end{proposition}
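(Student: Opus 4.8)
The plan is to analyze the inner maximization $\max_{\mb P\in\mathcal P_\lambda(\hat{\mb P}_{\bm\gamma})}\mb E_{\mb P}[\ell(\bm x,\xit)]$ directly as a quadratically constrained linear program in the weight vector $\bm w\in\Delta^n$. Writing $\ell_i:=\ell(\bm x,\bm\xi^i)$ and making the change of variables $\bm v=\bm w-\overline{\bm w}$, the affine constraint $\mathbf e^\top\bm w=1$ forces $\mathbf e^\top\bm v=0$, so the objective re-centers as $\sum_i w_i\ell_i=\EErhohat[\ell(\bm x,\xit)]+\sum_i v_i\big(\ell_i-\EErhohat[\ell(\bm x,\xit)]\big)$. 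The whole problem thus reduces to maximizing the linear form $\sum_i v_i(\ell_i-\EErhohat[\ell(\bm x,\xit)])$ over the intersection of the hyperplane $\mathbf e^\top\bm v=0$, the ellipsoid $\sum_i v_i^2/\overline{w}_i\le\lambda^2/2$, and the box $v_i\ge-\overline{w}_i$ that encodes nonnegativity of $\bm w$.

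For the upper bound I would discard the box constraints $v_i\ge-\overline{w}_i$; this enlarges the feasible region and can only increase the optimal value. On the relaxed region, applying Cauchy--Schwarz to the pairing of $v_i/\sqrt{\overline{w}_i}$ against $\sqrt{\overline{w}_i}\,(\ell_i-\EErhohat[\ell])$, bounded by the ellipsoid radius and by $\sqrt{\VVrhohat[\ell(\bm x,\xit)]}$ respectively, recovers the right-hand inequality $\EErhohat[\ell(\bm x,\xit)]+\lambda\sqrt{\VVrhohat[\ell(\bm x,\xit)]}$. The Cauchy--Schwarz equality case pins down the relaxed optimizer explicitly as $w_i^\star=\overline{w}_i\big(1+c(\ell_i-\EErhohat[\ell])\big)$ for a scalar $c>0$ fixed by saturating the ellipsoid constraint; this candidate drives both remaining claims.

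For the exactness statement I would verify that when $\VVrhohat[\ell(\bm x,\xit)]\ge\lambda^2$ the relaxed optimizer $w_i^\star$ is already nonnegative, hence feasible for the original simplex-constrained problem, which forces the upper and lower values to coincide. This is exactly where assumption \ref{as1} enters: since $\ell_i\in[0,1]$ we have $\ell_i-\EErhohat[\ell]\ge-1$, while the saturation constant $1/c$ scales like $\sqrt{\VVrhohat[\ell]}/\lambda$, which is at least $1$ under the hypothesis; combining these gives $1+c(\ell_i-\EErhohat[\ell])\ge0$ coordinatewise.

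For the general lower bound I would feed the candidate $w_i^\star$, truncated onto the nonnegative orthant when some coordinate is violated, back into the objective and quantify the value sacrificed by the truncation. The clipped mass that must be redistributed is supported on the violated coordinates, and a direct estimate (again using $\ell_i\in[0,1]$) shows the forfeited value is at most $\lambda^2$, yielding the floor $\big(\lambda\sqrt{\VVrhohat[\ell]}-\lambda^2\big)_+$; the positive part appears because the estimate is only informative once $\lambda\sqrt{\VVrhohat[\ell]}$ exceeds this slack. The main obstacle throughout is precisely the nonnegativity (box) constraint: stripped of it, the problem is a textbook trust-region computation solved in closed form, and essentially all the technical work---both the exactness threshold $\VVrhohat[\ell]\ge\lambda^2$ and the $-\lambda^2$ correction in the lower bound---amounts to certifying or repairing feasibility of the unconstrained optimizer against $\bm w\ge0$.
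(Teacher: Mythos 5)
Your upper-bound and exactness arguments coincide with the paper's proof: the same recentering $\bm u=\bm w-\overline{\bm w}$, the same weighted Cauchy--Schwarz (dual-norm) bound after discarding $\bm w\geq\mathbf 0$, the same saturating candidate $w_i^\star=\overline{w}_i\bigl(1+c(\ell_i-\EErhohat[\ell(\bm x,\xit)])\bigr)$, and the same feasibility certificate combining $\lvert \ell_i-\EErhohat[\ell(\bm x,\xit)]\rvert\leq 1$ (from \ref{as1}) with $1/c=\sqrt{\VVrhohat[\ell(\bm x,\xit)]}/\lambda\geq 1$ when $\VVrhohat[\ell(\bm x,\xit)]\geq\lambda^2$. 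Where you genuinely depart from the paper is the lower bound, and there your route is both unnecessary and incomplete as sketched. Unnecessary, because the positive part $\bigl(\lambda\sqrt{\VVrhohat[\ell(\bm x,\xit)]}-\lambda^2\bigr)_+$ is nonzero precisely when $\VVrhohat[\ell(\bm x,\xit)]>\lambda^2$, which is exactly the regime where your own exactness step already identifies the maximum as $\EErhohat[\ell(\bm x,\xit)]+\lambda\sqrt{\VVrhohat[\ell(\bm x,\xit)]}$, which dominates the claimed floor; in the complementary regime the positive part vanishes and the trivially feasible point $\bm w=\overline{\bm w}$ suffices. This two-case observation is the paper's entire lower-bound argument. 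Incomplete, because your clipping-and-redistribution estimate (``the forfeited value is at most $\lambda^2$'') is asserted rather than proven, and coordinatewise clipping followed by renormalization perturbs every coordinate and must be re-verified against the ellipsoid constraint, which your sketch does not address. If you insist on a single-candidate argument that never invokes the case split, scale toward the center instead of clipping: take $u_i=t\,\overline{w}_i(\ell_i-\EErhohat[\ell(\bm x,\xit)])$ with $t=\min(c,1)$. Scaling preserves $\mathbf e^\top\bm u=0$ and shrinks the ellipsoid norm, and $t\leq 1$ together with $\EErhohat[\ell(\bm x,\xit)]-\ell_i\leq 1$ gives $u_i\geq-\overline{w}_i$; the resulting value is $\EErhohat[\ell(\bm x,\xit)]+t\,\VVrhohat[\ell(\bm x,\xit)]$, which dominates the floor because $r^2-\lambda r+\lambda^2\geq 0$ for all $r=\sqrt{\VVrhohat[\ell(\bm x,\xit)]}$ (negative discriminant).

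One normalization caution, which you inherit from the paper itself: with the radius $\lambda^2/2$ exactly as written in \eqref{eq:modified_chi}, your Cauchy--Schwarz pairing yields the tight value $\lambda\sqrt{\VVrhohat[\ell(\bm x,\xit)]/2}$, not $\lambda\sqrt{\VVrhohat[\ell(\bm x,\xit)]}$, so the upper inequality survives but the equality claim in your exactness step would not. The stated bounds correspond to the constraint $\sum_{i}(w_i-\overline{w}_i)^2/(2\overline{w}_i)\leq\lambda^2/2$ actually used in the paper's proof (via its weighted norm $\Vert\cdot\Vert_W$), which differs from \eqref{eq:modified_chi} by a factor of two. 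Fix the normalization of your ellipsoid explicitly before claiming equality in the Cauchy--Schwarz step; otherwise the argument is sound.
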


As stated in \Cref{prop:DRO}, if $\hat{\mathbb{V}}_\gamma [\ell(\bm x,\xit) ] \geq \lambda^2$ then the DRO model is equivalent to the proposed regularization scheme. Although $\hat{\mathbb{V}}_\gamma [\ell(\bm x,\xit) ] $ is a random quantity, it should be close to $\mathbb{V}_\gamma [\ell(\bm x,\xit) ]$ with high probability when $n$ is sufficiently large. In addition, \Cref{cor:continuousX} suggests the scaling $\lambda= O\left( 1/ \sqrt{ n h_n^pg(\bm\gamma) }\right)$, which converges to $0$ as $n\rightarrow \infty$. Based on these observations, we derive the condition under which the two models are equivalent with high probability.

\begin{proposition}\label{prop:empirical_var_large}
Suppose 
\begin{equation}\label{eq:DROequivlent_var_cond}
\sqrt{\VVrho[\ell(\bm x,\xit)]} \geq  \frac{C_\lambda}{\sqrt{ n h_n^pg(\bm\gamma) }} + \tau 
+ \sqrt{ \frac{\log\left(\frac{\lvert \mc X_\eta \rvert}{\delta}\right)+\log(1+2/\tau)}{ n h_n^pg(\bm\gamma)(1+o(1))}}+2M\eta \qquad \forall\bm x \in \mc X,
\end{equation}
for some constants $C_\lambda, \tau, \eta \in \mathbb{R}_{++}$, $\delta \in (0,1)$, and $\lvert \mathcal X_\eta \rvert=O(1) (D/\eta)^d$. Then, with probability at least $1-\delta$, 
\begin{equation*}
\max_{\mb P\in\mathcal P_\lambda(\hat{\mb P}_{\bm \gamma})} \mb E_{\mb P} [\ell(\bm x,\xit)]= \EErhohat[\ell(\bm x,\xit)] + \lambda\sqrt{\VVrhohat[\ell(\bm x,\xit)]} ,
\end{equation*}
where  the ambiguity set $\mathcal P_\lambda(\hat{\mb P}_{\bm \gamma})$ is defined in  \eqref{eq:modified_chi} and the regularization parameter $\lambda$ is set to $C_\lambda / \sqrt{ n h_n^pg(\bm\gamma) }$.
\end{proposition}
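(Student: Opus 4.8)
The plan is to reduce everything to Proposition~\ref{prop:DRO}, which already delivers the claimed equality on the event $\{\VVrhohat[\ell(\bm x,\xit)]\geq\lambda^2\}$. Hence it suffices to show that, with probability at least $1-\delta$, the \emph{empirical} conditional standard deviation satisfies $\sqrt{\VVrhohat[\ell(\bm x,\xit)]}\geq\lambda$ simultaneously for every $\bm x\in\mc X$, where $\lambda=C_\lambda/\sqrt{nh_n^pg(\bm\gamma)}$. The entire argument thus becomes a uniform lower bound on the empirical standard deviation, which I would obtain by transferring the hypothesis on the \emph{true} standard deviation through Proposition~\ref{prop:bound_sample_stddev}.

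First I would discretize the continuous feasible set. Let $\mc X_\eta$ be a minimal $\eta$-net of $\mc X$, so that $\lvert\mc X_\eta\rvert=O(1)(D/\eta)^d$ exactly as in Theorem~\ref{thm:generalization_bound_continuousX}. Applying Proposition~\ref{prop:bound_sample_stddev} at each net point with confidence level $\delta/\lvert\mc X_\eta\rvert$ and taking a union bound over the $\lvert\mc X_\eta\rvert$ points, I obtain that with probability at least $1-\delta$, for every $\bm x_j\in\mc X_\eta$,
\[
\sqrt{\VVrhohat[\ell(\bm x_j,\xit)]}\;\geq\;\sqrt{\VVrho[\ell(\bm x_j,\xit)]}-\tau-\sqrt{\frac{\log\!\left(\frac{\lvert\mc X_\eta\rvert(1+2/\tau)}{\delta}\right)}{nh_n^pg(\bm\gamma)(1+o(1))}}.
\]
The logarithm here equals $\log(\lvert\mc X_\eta\rvert/\delta)+\log(1+2/\tau)$, matching the terms appearing in the hypothesis~\eqref{eq:DROequivlent_var_cond}.

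Next I would extend this net-wise bound to all of $\mc X$ via the Lipschitz structure of the standard-deviation functional. The key observation is that both $\bm x\mapsto\sqrt{\VVrho[\ell(\bm x,\xit)]}$ and $\bm x\mapsto\sqrt{\VVrhohat[\ell(\bm x,\xit)]}$ are $M$-Lipschitz: each is an $L^2$-seminorm of the centered loss, so the reverse triangle inequality together with the pointwise bound $\lvert\ell(\bm x,\bm\xi)-\ell(\bm x',\bm\xi)\rvert\leq M\lVert\bm x-\bm x'\rVert$ from~\eqref{eq:obj_fun_lips_assum} yields the claim, using that centering does not increase the $L^2$-norm. For an arbitrary $\bm x\in\mc X$ with nearest net point $\bm x_j$ (so $\lVert\bm x-\bm x_j\rVert\leq\eta$), I would chain $\sqrt{\VVrhohat[\ell(\bm x,\xit)]}\geq\sqrt{\VVrhohat[\ell(\bm x_j,\xit)]}-M\eta$ and $\sqrt{\VVrho[\ell(\bm x_j,\xit)]}\geq\sqrt{\VVrho[\ell(\bm x,\xit)]}-M\eta$ with the net-wise bound to arrive at
\[
\sqrt{\VVrhohat[\ell(\bm x,\xit)]}\;\geq\;\sqrt{\VVrho[\ell(\bm x,\xit)]}-2M\eta-\tau-\sqrt{\frac{\log\!\left(\frac{\lvert\mc X_\eta\rvert(1+2/\tau)}{\delta}\right)}{nh_n^pg(\bm\gamma)(1+o(1))}}.
\]
The hypothesis~\eqref{eq:DROequivlent_var_cond} states precisely that the right-hand side is at least $C_\lambda/\sqrt{nh_n^pg(\bm\gamma)}=\lambda$, so $\VVrhohat[\ell(\bm x,\xit)]\geq\lambda^2$ holds uniformly over $\mc X$, and Proposition~\ref{prop:DRO} closes the argument.

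I expect the main obstacle to be the uniform control over the continuous set $\mc X$, and in particular verifying cleanly that the empirical and true standard deviations are genuinely $M$-Lipschitz in $\bm x$ so that the discretization error enters only additively as $2M\eta$. The net and union-bound bookkeeping is routine, but one must take care that the confidence budget $\delta$ is split correctly across $\mc X_\eta$ and that the $(1+o(1))$ factors arising from the moderate-deviations asymptotics in Proposition~\ref{prop:bound_sample_stddev} are handled uniformly; since these factors do not depend on $\bm x$, they cause no real difficulty but should be stated explicitly.
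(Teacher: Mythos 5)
Your proposal is correct and follows essentially the same route as the paper's own proof: both reduce the claim via Proposition~\ref{prop:DRO} to the uniform lower bound $\sqrt{\VVrhohat[\ell(\bm x,\xit)]}\geq\lambda$, establish it by applying Proposition~\ref{prop:bound_sample_stddev} with a union bound over an $\eta$-net $\mc X_\eta$, and transfer the bound to all of $\mc X$ through the $M$-Lipschitz continuity of the true and empirical conditional standard deviations (the paper's Lemma~\ref{lem:lipschitz_loss}), incurring exactly the same $2M\eta$ discretization term and the $\log(\lvert\mc X_\eta\rvert/\delta)+\log(1+2/\tau)$ factor. Your bookkeeping of the confidence budget and the Lipschitz chaining matches the paper's argument step for step, so there is nothing to flag.
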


\Cref{prop:empirical_var_large} provides a technical condition \eqref{eq:DROequivlent_var_cond}  for which, with high probability, the DRO model is equivalent to the proposed regularization scheme, which is in general intractable. We  emphasize that the condition \eqref{eq:DROequivlent_var_cond} should hold for sufficiently large $n$ if $\VVrho[\ell(\bm x,\xit)]  > 0$ for all $\bm x \in \mc X$. In particular, by carefully choosing the bandwidth $h_n$ (accordingly, $\tau$,  $\eta$, $\delta$), the right-hand side converges to $0$ as $n\rightarrow \infty$. For example, suppose the bandwidth $h_n = C_h/n^{1/(p+4)}$ is used with some constant $C_h > 0$. Then, one can show that for sufficiently large $n$, the right-hand side of \eqref{eq:DROequivlent_var_cond} becomes
\begin{equation*}
\sqrt{ \frac{2}{n^{2/(p+4)}g(\bm\gamma)(1+o(1))}} + \frac{2}{\exp\left( C_h^p n^{2/(p+4)} \right) - 1} + \frac{2MD}{\exp\left( \frac{C_h^p n^{2/(p+4)}}{2d}\right)} \rightarrow 0 \quad\text{as}\; n\rightarrow\infty,
\end{equation*}
and the DRO model is equivalent to the proposed regularization scheme with probability at least $1-C_{\mathcal X}\exp\left( - \frac{C_h^p n^{2/(p+4)}}{2}\right)$ for some constant $C_{\mathcal X}$. We provide the details and the associated corollary of \Cref{prop:empirical_var_large} in \Cref{app:empirical_var_large}.

\begin{remark}\label{rem:socp}
Assume that $\mathcal{X}$ is a convex set and $\ell(\bm x,\bxi)$ is convex in $\bm x$ for all $\bxi\in\Xi$. 
Then, the DRO problem
\begin{equation*}\label{eq:dro_formulation}
\tag{$\mathcal{DRO}$}
\min_{\bm x \in \mathcal{X}} \max_{\mb P\in\mathcal P_\lambda(\hat{\mb P}_{\bm \gamma})}\mb E_{\mb P} [\ell(\bm x,\xit)] 
\end{equation*}
can be formulated as the convex optimization problem given by
\begin{equation}\label{eq:dualform-conic}
\begin{aligned}
&\min && \alpha -  \sum_{i=1}^n  \sqrt{\overline{w}_i} \beta_i + \frac{\lambda}{\sqrt{2}} \nu   \\
&\text{s.t.} && \alpha \geq \ell(\bm x, \bm \xi^i) +\frac{\beta_i}{\sqrt{\overline{w}_i}} & \forall i\in [n],  \\
&&& \bm x \in \mc{\X} , \; \alpha\in\mb R, \; (\bm\beta,\nu) \in \SOC(n+1).
\end{aligned}
\end{equation}
    
\noindent Thus, the \ref{eq:dro_formulation} problem is efficiently solvable as a second-order cone program provided that $\mathcal{X}$ is second-order conic representable and $\ell(\bm x,\bxi)$ is either a convex quadratic or a piecewise linear convex function of $\bm x$ for all $\bxi$.
\end{remark}

\section{Numerical Experiments}
\label{sec:numerical_exp}
We evaluate the performance of the distributionally robust model \eqref{eq:dro_formulation} in the context of inventory management and wind energy commitment applications. All the experiments were run on a 2.2~GHz Intel Core i7 CPU laptop with 8 GB RAM and solved using MOSEK 9.2.  

\subsection{Inventory Management}
We  first consider  the classical newsvendor problem with side information. Faced with an uncertain demand $\tilde{\xi}$, the vendor is interested in determining the order quantity $q$ that minimizes the overall cost. The vendor incurs a cost, which includes two components: holding cost and stock-out cost. Associated with order quantity $q$, the cost function  assumes the following form:
\begin{equation}
\ell(q, \xi) =  h (q-\xi)_+ + b (\xi-q)_+ ,
\end{equation}
where $b$ and $h$ denote respectively the per unit stock-out and holding costs. We assume that the random side information vector $\tilde{\bm \gm} = (\tilde{t},\tilde{p})$ consists of two components: $\tilde{t}\in [0,15]$, which represents the time of the day and $p \in [0,10]$, which is a measure of the popularity of the product at any given time. We assume that the demand varies according to the conditional distribution  
$\tilde{\xi}\sim U(\psi(\bm\gamma)-10,\psi(\bm\gamma)+10)$, 
which is uniform with mean
\begin{equation}
\psi(\bm\gamma)=50+20\cdot\sin\bigg(\frac{t}{\pi/3}\bigg) + 5p .
\end{equation}
In this equation, the first constant term represents a baseline demand for the product at any given time. The second term, which is a sinusoidal function of $t$, aims to capture the fluctuations in demand based on time, while the final term represents a linear relationship in the popularity $p$ of the product and its mean demand. Based on the derivation in \eqref{eq:dualform-conic}, we obtain the following DRO formulation for the newsvendor problem: 
\begin{equation}\label{eq:newsvendor}
\begin{aligned}
&\min && \alpha -  \sum_{i=1}^n  \sqrt{\overline{w}_i} \beta_i + \frac{\lambda}{\sqrt{2}} \nu   \\
&\text{s.t.} && \alpha \geq z_i +\frac{\beta_i}{\sqrt{\overline{w}_i}} & \forall i\in [n],  \\
&&& z_i\geq  h s^+_i + b s^-_i, \\
&&& s^+_i\geq  q-\xi^i  & \forall i\in [n],\\
&&& s^-_i\geq  \xi^i-q  & \forall i\in [n],\\
&&& (\bm\beta,\nu) \in \SOC(n+1), \\
&&& \bm s^+,  \bm s^- \in\RR_+^n, \ q\in \RR_+, \\
&&& \alpha\in\mb R, \; \bm z \in \mb R^n.  
\end{aligned}
\end{equation}
We measure the quality of the optimal solution $q^\star$ obtained by solving the formulation~\eqref{eq:newsvendor} in terms of the out-of-sample loss for  \eqref{eq:conditional_expectation_problem} formulation, which represents the true stochastic optimization problem with side information. Since we do not have access to the true conditional expectation of the loss function, we generate 500 samples of $\tilde{\xi}$ to approximate the conditional loss and solve the sample average approximation problem at each of the side information covariates $\bm \gamma$ of interest. 

In our problem setup, we set the parameters for the newsvendor problem to $b=10$ and $h=6$. We assume that the side information vector $\tilde{\bm \gamma}$ has a bivariate normal distribution $\mc N (\bm \mu, \bm \Sigma)$, with mean $\bm \mu=[7.5,5]^\top$ and covariance matrix $\bm \Sigma=\Diag([2,1])$. For our experiments, we conduct 10~simulation runs for each side information covariate ${\bm\gamma}$ and sample size $n$  of interest. In each simulation, we generate a training dataset $\{(\bm\gamma^i,\bm\xi^i)\}_{i=1}^n$ consisting of $n$ samples, solve the newsvendor DRO formulation, and evaluate the out-of-sample loss at each ${\bm\gamma}$ of interest.

\begin{figure}[h!]
  \centering
\begin{subfigure}[t]{.49\textwidth}
\centering
      \includegraphics[width=1.0\textwidth]{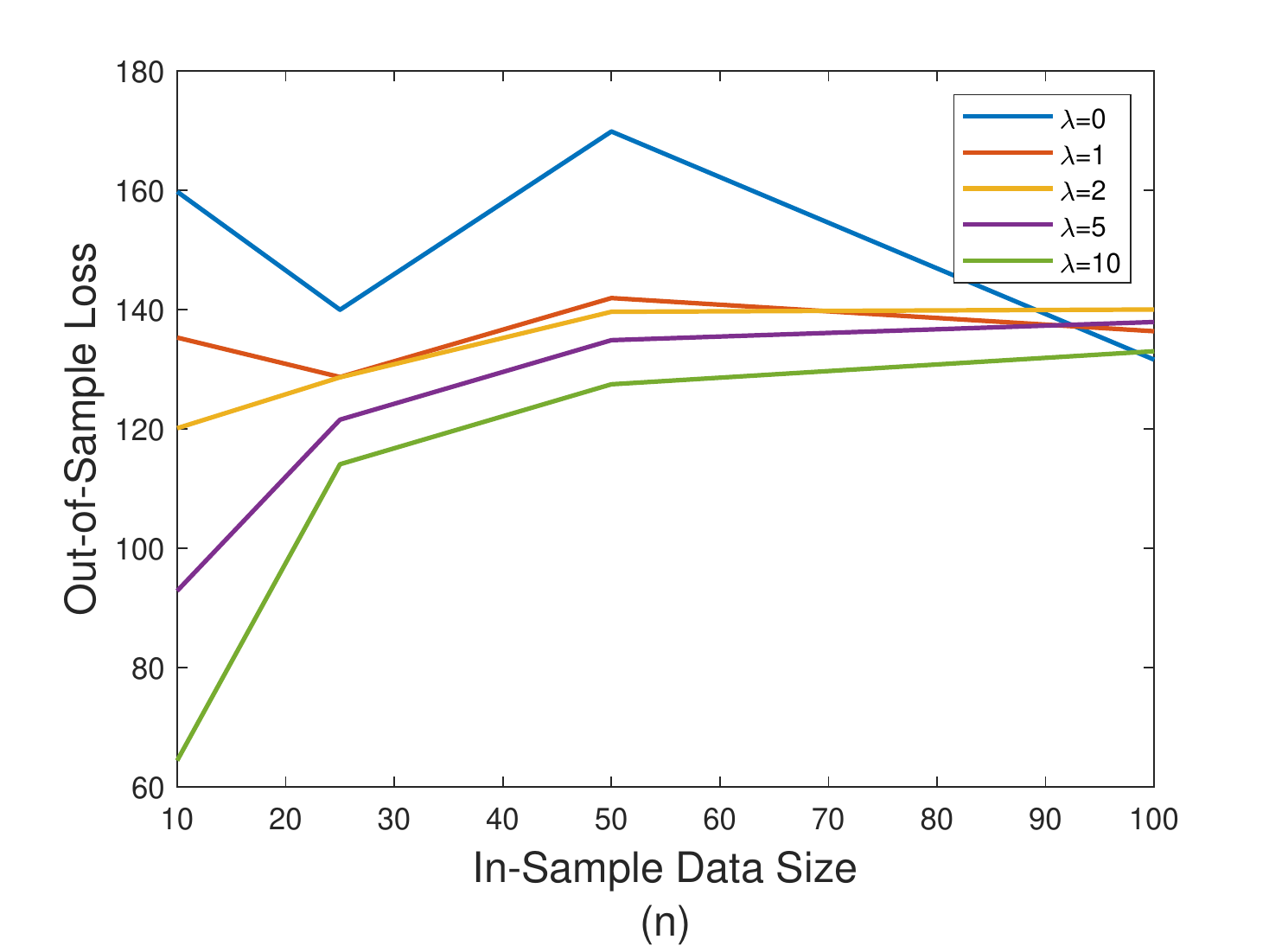}
  \caption{${\bm \gamma}=(1.5,2.5)$}
  \label{fig:newsvendor1}
\end{subfigure}
\begin{subfigure}[t]{.49\textwidth}
\centering
      \includegraphics[width=1.0\textwidth]{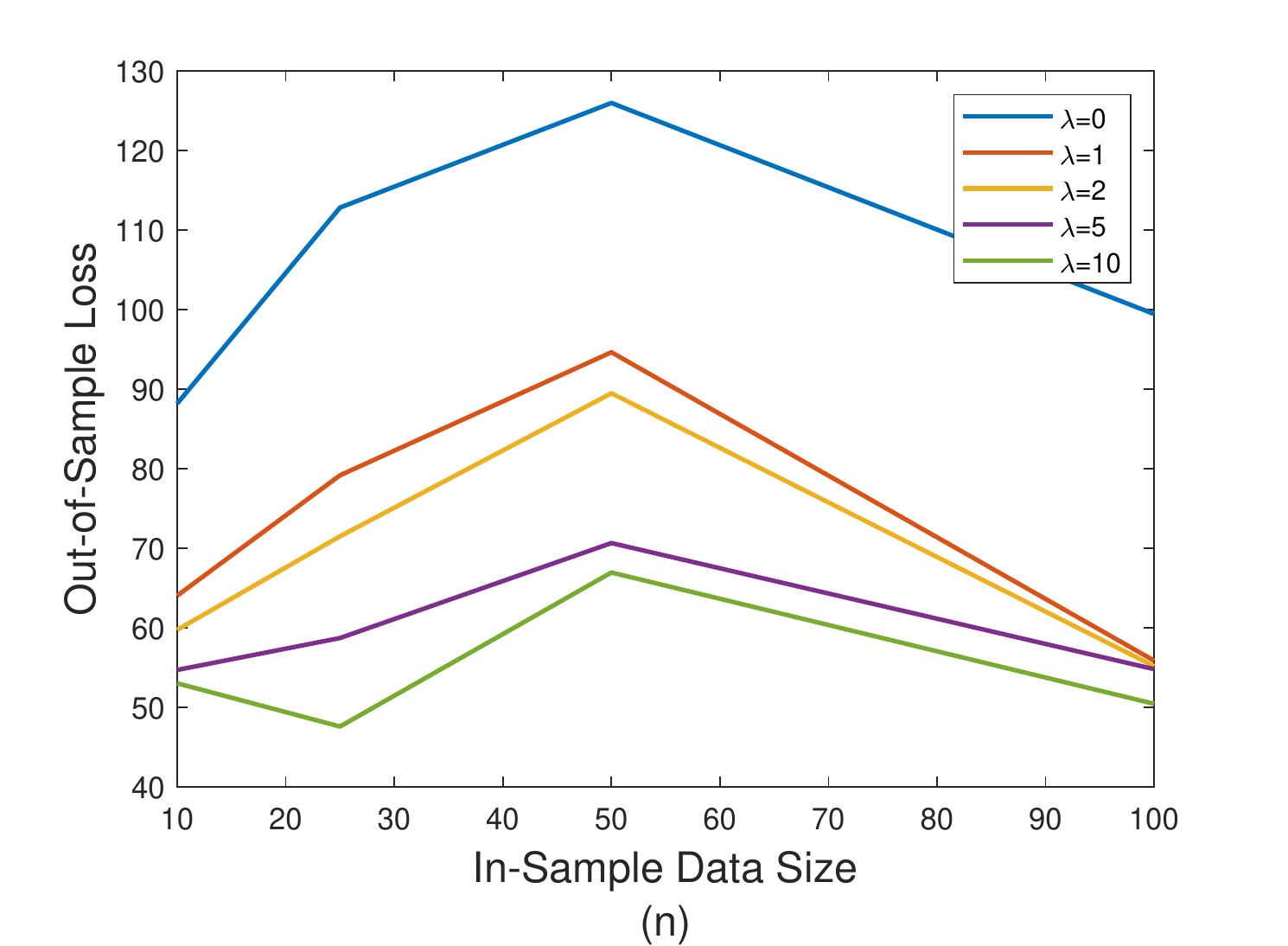}
  \caption{${\bm\gamma}=(3,2.5)$}
  \label{fig:newsvendor2}
\end{subfigure}
\begin{subfigure}[t]{.49\textwidth}
\centering
      \includegraphics[width=1.0\textwidth]{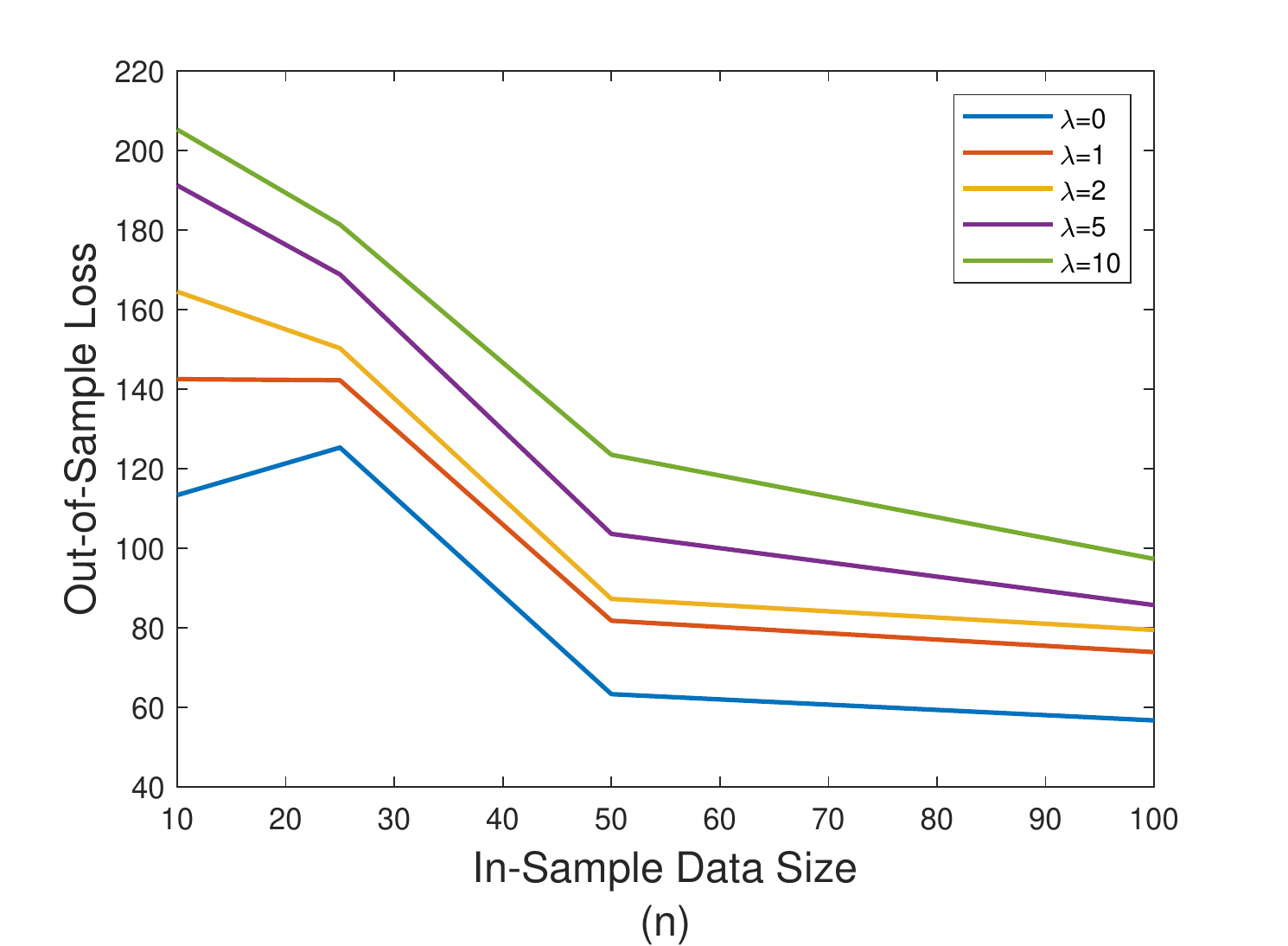}
  \caption{${\bm\gamma}=(4.5,2.5)$}
  \label{fig:newsvendor3}
\end{subfigure}
\begin{subfigure}[t]{.49\textwidth}
\centering
    \includegraphics[width=1.0\textwidth]{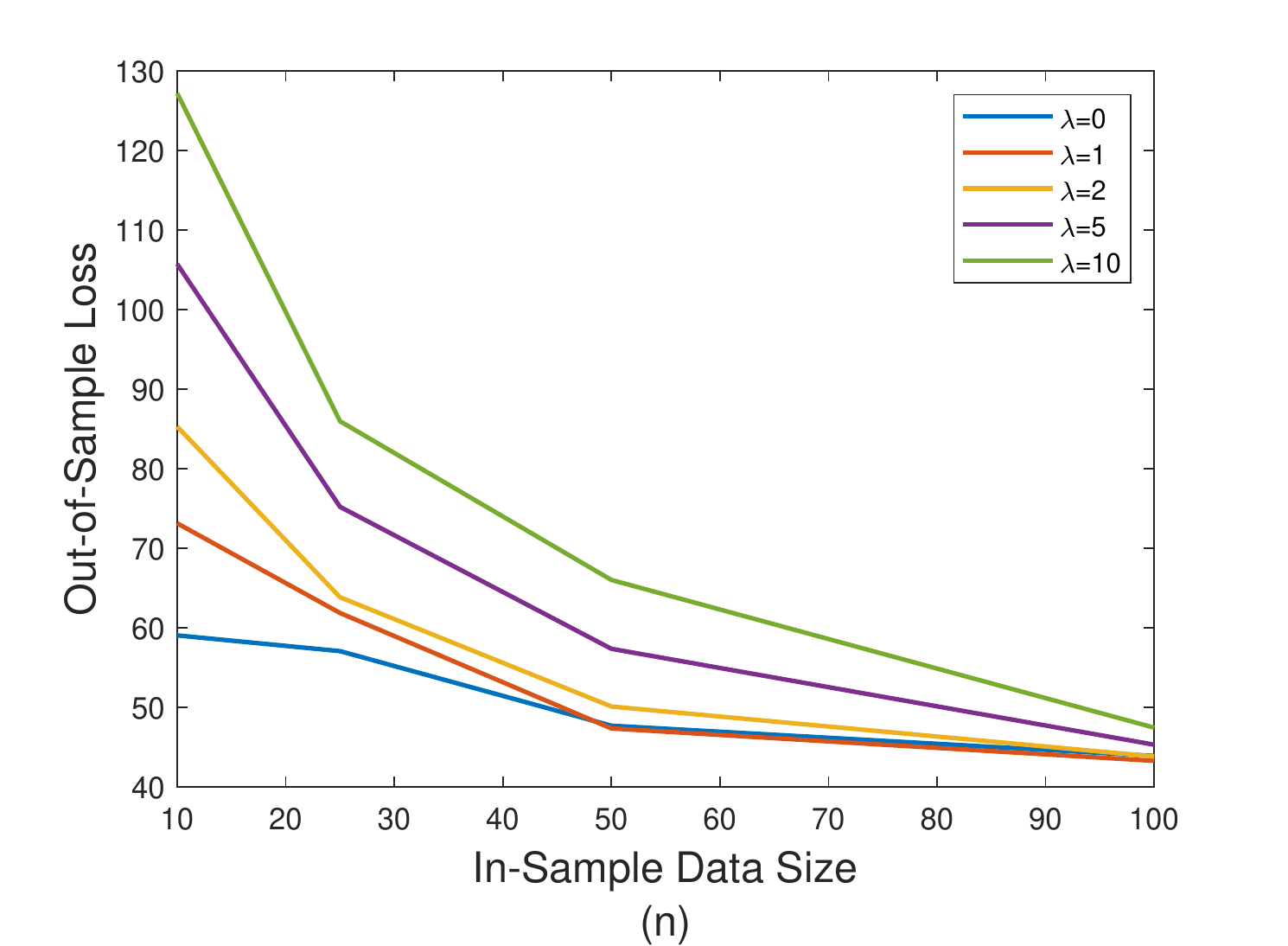}
  \caption{${\bm\gamma}=(4.5,5)$}
  \label{fig:newsvendor6}
\end{subfigure}
\begin{subfigure}[t]{.49\textwidth}
\centering 
      \includegraphics[width=1.0\textwidth]{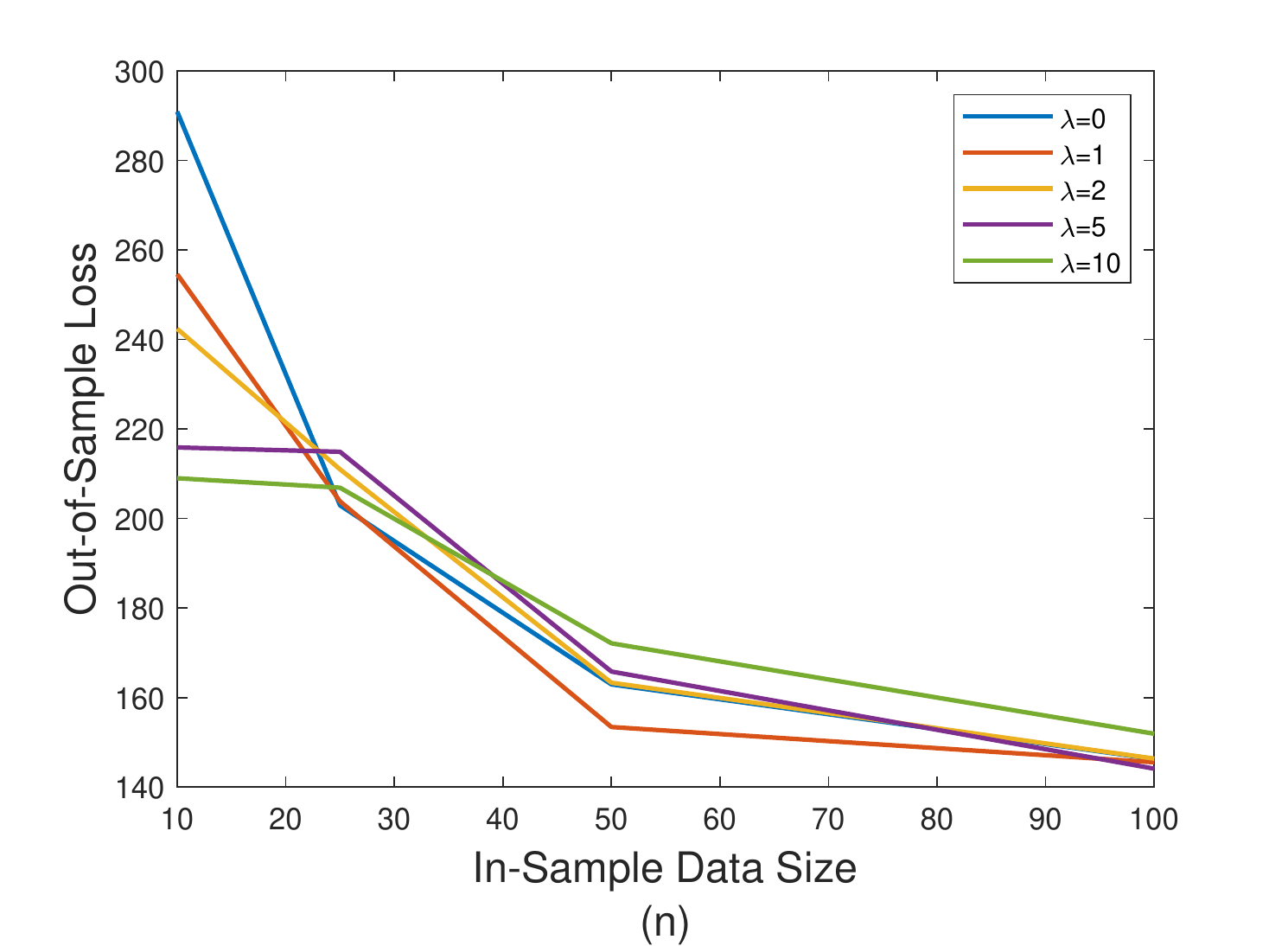}
  \caption{${\bm\gamma}=(1.5,5)$}
  \label{fig:newsvendor4}
\end{subfigure}
\begin{subfigure}[t]{.49\textwidth}
\centering
      \includegraphics[width=1.0\textwidth]{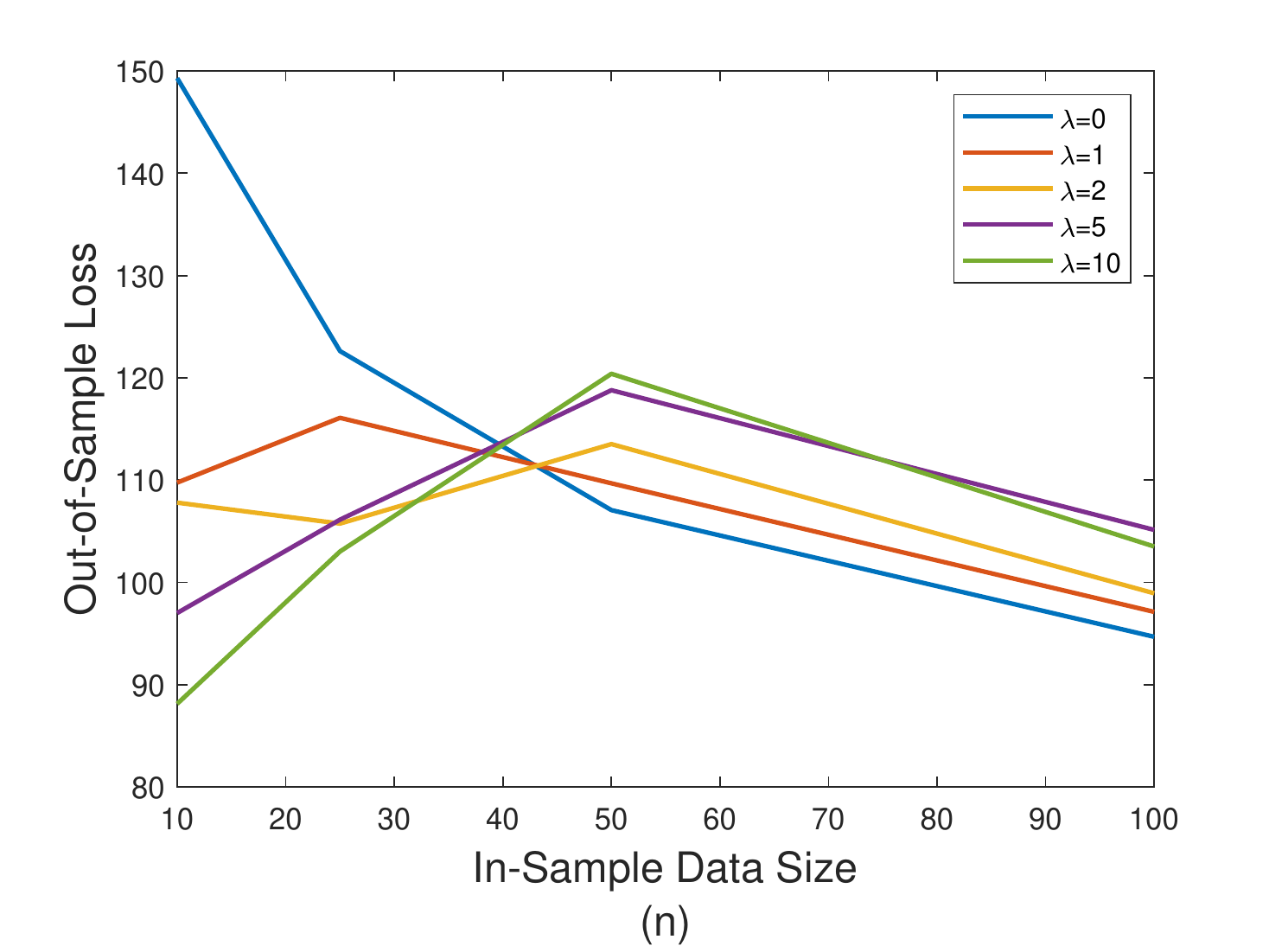}
  \caption{${\bm\gamma}=(3,5)$}
  \label{fig:newsvendor5}
\end{subfigure}
\caption[Effect of regularization parameter $\lambda$ on out-of-sample loss for different side information covariates $\bm\gamma$]{Effect of regularization parameter $\lambda$ on the average out-of-sample loss for different sizes of training datasets at different side information covariates $\bm\gamma$.}
\label{fig:newsvendor}
\end{figure}

\Cref{fig:newsvendor} shows the results obtained. From the figure, we note that at points ${\bm\gamma}=(1.5,2.5)$  and ${\bm\gamma}=(3,2.5)$, where the density function values  $g(\bm \gamma)$ for the bivariate normal  are much smaller, the regularization scheme is quite effective and the average out-of-sample loss decreases significantly with the increase in regularization parameter $\lambda$. On the other hand, for points close to the mean $\bm \mu=[7.5,5]^\top$, for example, ${\bm\gamma}=(4.5,2.5)$ and ${\bm\gamma}=(4.5,5)$, the unregularized ($\lambda=0$) version of the formulation perform much better. This is quite intuitive since, in regions of high density, the NW estimator forms a good approximation to the true conditional expectation even for small sample sizes. By contrast, in the regions where the density values are smaller, the regularization term seeks to control the amount of overfitting to the limited data available. Another important observation that we make is that, in regions of moderate density values, for example, for points ${\bm\gamma}=(1.5,5)$ and ${\bm\gamma}=(3,5)$, the regularization helps in the setting where the sample sizes are smaller ($n=10$ and $n=25$). This is consistent with the generalization bound obtained in \Cref{cor:generalization_bound}. 

\subsection{Wind energy commitment}

We next apply our DRO formulation to the wind energy problem considered in \cite{hannah2011approximate} and \cite{kim2011optimal}. At the beginning of day $t$, a wind energy producer determines the wind energy commitment levels $\bm x \in \RR^{24}$ for the next $24$ hours. The day-ahead prices $\bm \pi^t \in \RR^{24}$ are known to the decision maker. However, the hourly amounts of wind energy $\bm \xi^t \in \RR^{24}$ generated for the next $24$ hours are uncertain. If the actual production falls short of the commitment level, there is a penalty of twice the respective day-ahead price for each unit of unsatisfied demand. As the wind energy is generally highly correlated to the past data, we consider the side information vector ${\bm \gamma}={\bm \xi}^{t-1}$ in the implementation. Based on the derivation in~\eqref{eq:dualform-conic}, we arrive at the following DRO formulation for the wind energy commitment problem:
\begin{equation}\label{eq:wind-energy}
\begin{array}{cll}
\min& \alpha - \displaystyle \sum_{i=1}^n  \sqrt{\overline{w}_i} \beta_i + \frac{\lambda}{\sqrt{2}} \nu  \\
\text{s.t.} & \alpha \geq -\bm x^\top \bm \pi^i+\displaystyle 2 \sum_{j=1}^{24} \pi^i_j \max\{x_j-\xi^i_j,0\} +\frac{\beta_i}{\sqrt{\overline{w}_i}} &\forall i\in [n], \\
& \bm x \in \RR_+^{24}, \;\alpha\in\mb R, \; (\bm\beta,\nu) \in \SOC(n+1).
\end{array}
\end{equation}

In the experiment, we obtain the hourly wind energy data from North American Land Data Assimilation System\footnote{\url{https://climatedataguide.ucar.edu/climate-data/nldas-north-american-land-data-assimilation-system}} from $2002$ to $2011$ at the following locations: \textbf{Rhode Island} (41.8252N, 71.4188W) and \textbf{North Carolina} (33.9375N, 77.9375W).
The hourly day-ahead prices are downloaded from the publicly available PJM market dataset.\footnote{\url{http://dataminer2.pjm.com/feed/da_hrl_lmps/definition}} As the wind energy and day-ahead prices are closely related to seasons, we divide each year's data into four parts according to different seasons and conduct out-of-sample tests on each of them separately. In each season, we assume the decision maker has access to the first $n+1$ days of data, and plans for  the commitment levels for the next day. To incorporate side information, the historical data is then rearranged to $n$ samples of the form $\{(\bm \gamma^i, \bm \xi^{i+1})\}_{i=1}^{n}$, where we set $\bm \gamma^i=\bm \xi^i$ to be the covariate vector comprising of the  wind energy productions on day $i$. As $\bm \gamma^i$ is a $24$-dimensional vector with high correlations between its components, we adopt the dimensionality reduction procedure described in Section~\ref{sec:high_dim} to determine a $3$-dimensional subspace that explains more than $90\%$ of the variability of the historical observations. The NW kernel weights $\overline{w}_i$ are consequently computed using the projected data. We solve problem \eqref{eq:wind-energy} to obtain the optimal commitment levels and evaluate its true profit using the next day's data. We then drop the first day's data and include the data of the $(n+2)$th day, and move on to the planning for the $(n+3)$th day. We repeat this process $N$ times, and compute the total profit for these $N$ days as one trial's result. As there are 40 seasons in 10 years, we have $40$ trials in total.

 We then benchmark our Regularized Nadaraya-Watson (RNW) method with sample average approximation (SAA), the unregularized Nadaraya-Watson (NW) \citep{hannah2011approximate} and the residual-based distributionally robust optimization  (ERDRO) \citep{kannan2020residuals} methods in out-of-sample experiments. We also implemented the regularized linear decision rule (LDR) method \citep{baziergeneralization}; however, the method performs poorly and thus we do not  report the results. LDR fails in this experiment because the wind energy data is nonlinear and very complicated; such a parameterized regression model cannot fit it well and thus yields poor predictions. The ERDRO method  assumes $\xit$ can be modeled in terms of $\gammat$ as  $\xit = f(\tilde{\bm\gamma})+\tilde{\bm\epsilon}$, where $f(\gammat)=\mb E[\xit\lvert \tilde{\bm\gamma} ]$ is the regression function while $\tilde{\bm\epsilon}$ are mean zero errors. For the same reason with LDR, we adopt the nonparametric Nadaraya-Watson regression model to predict $\xit$ conditioned on the side information $\bm\gamma$, and we solve for the best commitment level in view of the worst-case distribution from within a modified $\chi^2$ ambiguity set. With these settings, we find that the ERDRO model  performs really well for this particular problem. 
 
 In the experiment, we set $n=14$ and $N=25$. The radius of the ambiguity set $\lambda$ and the bandwidth parameter $C_h$ are determined following a cross-validation procedure. 
In each trial, we split the first $2/3$ of the training set into a sub-training set and keep the remaining samples as a sub-validation set. Then we set the radius $\lambda$ to zero, and collect the total return of different bandwidth parameters $C_h\in [5\times 10^2, 5 \times 10^4]$ on a logarithm searching grid with 9 equidistant points. Next, we fix the best $C_h$ obtained in the previous procedure and tune for the best radius $\lambda \in [10^{-2},10^2]$ on a logarithm searching grid with 17 equidistant points.

\begin{table}[h]
    \centering
    \begin{tabular}{p{1.5cm} c  r  r  r}
    \hline
         Site & Statistic  & NW  & ERDRO& RNW \\
     \hline
        \multirow{3}{1.5em}{RI} & Mean  & 55.5 & 96.1 & 110.0\\
         &20th prct.  & -6.3  & 13.1 & 45.7\\
         & 80th prct. & 116.5  & 163.6 & 164.8\\
         \hline
         \multirow{3}{1.5em}{NC} & Mean  & 64.2 & 69.1 & 79.1\\
         &20th prct.  & -7.9  & -2.8 & 7.0\\
         & 80th prct. & 179.1  & 192.5 & 189.4\\
         \hline
    \end{tabular}
    \caption{Statistics of improvements over SAA $(\%)$}
    \label{tab:wind}
\end{table}

Table \ref{tab:wind} presents the statistics of improvement over the baseline sample average approximation (SAA) for the  unregularized Nadaraya-Watson (NW) method, the residual-based distributionally robust optimization method (ERDRO), and our Regularized Nadaraya-Watson (RNW) method. In each trial, the improvement over SAA is computed using the rule $d(x,y)=2(x-y)/\left(|x|+|y| \right)$, where $x$ is the $N$ days' total profit obtained by one of the three methods and $y$ is the total profit obtained by SAA.
The results indicate that our RNW method performs favorably relative to its competitors: it achieves the greatest mean improvements over SAA.
Meanwhile, we observe that the NW and ERDRO methods also attain significant improvements over SAA, which implies exploring side information indeed helps decision makers better estimate uncertainties. Moreover, with the benefit of the distributionally robust setting, the ERDRO and RNW methods are more robust in terms of 20th percentile compared with the vanilla NW regression method. 
And compared with the ERDRO model which applies regression to predict the high dimensional uncertain parameter $\xit$ conditioned on $\tilde{\bm\gamma} = \bm\gamma$, our method predicts the conditional expectation $\EE_{\bm \rho}[\ell(\bm x,\xit)]$ directly. Thus, we avoid the errors that come from high dimensional regression and obtain a better performance.


\section{Concluding remarks}
\label{sec:conclusion}
The NW approximation has recently garnered an increasing interest due to its significance in the context of decision-making under uncertainty with side information. The scheme, however, has so far resisted any sensible result about its out-of-sample performance.
In this paper, we established for the first time a complete, comprehensive theoretical result on the performance guarantees of the approximation. The new result inspired us to design a novel regularization scheme that can better mitigate the overfitting effects. In contrast to the popular $L_2$ regularization scheme which attempts to minimize the norm of the decision vector and may pointlessly encourage  an optimal solution that  is close to the origin, our proposed regularization scheme is directly constructed using the conditional standard deviation term appearing in the  theoretical bounds and can faithfully prioritize an optimal solution that generalizes well. In the future, it would be interesting to extend the model to the multi-stage setting, and devise a tractable solution procedure with similar performance guarantees for dynamic stochastic optimization problems.

	\end{onehalfspace}

\bibliographystyle{plainnat}
\bibliography{references}

\newpage
\begin{onehalfspace}
\appendix
\appendixpage
\addappheadtotoc
\numberwithin{equation}{section}
\section{Proof of Theorem~\ref{thm:MDP}}
\begin{proof}
To prove the desired result, we define a random variable $\tilde{L} = L(\bm\xit)$ and $\hat{f}(\bm\gamma, L)$ to be the joint density function of $(\gammat,\tilde{L})$. Then, we show that our setting is eligible to apply Theorem 2 in \cite{mokkadem:08}, which requires the following conditions to hold.
\begin{enumerate}
    \item[(A)] The kernel function $\mathcal K : \mathbb{R}^p \rightarrow \mathbb{R}$ is a bounded and integrable function that satisfies
    \begin{equation*}
    \int_{\mathbb{R}^p} \mathcal K(\bm\theta)\mathrm{d}\bm\theta= 1 \quad\text{and}\quad \lim_{\Vert \bm\theta \Vert \rightarrow \infty} \mathcal K(\bm\theta) = 0
    \end{equation*}
    
    \item[(B)] For any $u\in\mathbb{R}$, the function $\displaystyle \bm{t} \rightarrow \int_{\mathbb{R}} \exp( u L) \hat{f}(\bm t , L) \mathrm{d} L$ is continuous at $\bm{t}=\bm\gamma$ and bounded.
    
    \item[(C)] For any $u\in\mathbb{R}$, the functions $\displaystyle \bm{t} \rightarrow \int_{\mathbb{R}} u^2 L^2 \hat{f}(\bm t, L) \mathrm{d} L$ and $\displaystyle \bm{t} \rightarrow \int_{\mathbb{R}} u L \hat{f}(\bm t, L) \mathrm{d} L$ is continuous at $\bm{t}=\bm\gamma$, and the marginal density $\hat{f}_{\tilde{\bm\gm}}(\bm{\gamma}) \neq 0$, where $\bm{\gamma}$ is the fixed side information of interest in~\eqref{eq:conditional_expectation_problem}.
    
    \item[(D)] The sequence $\{a_n\}_{n\in\mathbb{N}}$ is chosen such that
    \begin{equation*}
    \lim_{n\rightarrow \infty} a_n=\infty \quad\text{and}\quad \lim_{n\rightarrow \infty} \frac{a_n^2}{n h_n^p}=0.
    \end{equation*}
    
    \item[(E)] There exists an integer $S\geq 2$ such that 
	\begin{itemize}
	\item[(i)] For $\forall s \in [S-1]$, $\forall j \in [p]$, 
    \begin{equation*}
    \int_{\mathbb{R}^p} \theta_j^s\mathcal{K}(\bm\theta)\mathrm{d}\bm\theta = 0 \quad\text{and}\quad  \int_{\mathbb{R}^p} \vert\theta_j^S\mathcal{K}(\bm\theta) \vert \mathrm{d}\bm\theta < \infty .
    \end{equation*}
    \item[(ii)] The chosen sequence $\{a_n\}_{n\in\mathbb{N}}$ satisfies $\lim_{n\rightarrow \infty} a_n h_n^S =0$.
    \item[(iii)] Both functions $\hat{f}_{\tilde{\bm \gamma}}(\bm\gamma)$ and $\int_0^1 L \cdot \hat{f}(\bm\gamma , L) \mathrm{d}L$ are $S$-times differentiable on $\mathbb{R}^p$, and their differentials of order $S$ are bounded and continuous at $\bm\gamma$.
\end{itemize}	    
\end{enumerate}

In what follows, we will show that Assumption \ref{as2} and all conditions in the theorem imply the conditions (A)-(E) above. We first notice that condition (A) holds for our choice of exponential kernel \eqref{eq:Gaussian_kernel}. To show that condition (B) holds, we note that
\begin{equation}
\label{eq:ConditionExp}
\EE[\exp( u L(\bm\xi)) \,|\,\gammat=\bm\gamma] 
= \frac{\int_{\mathbb{R}} \exp( u L) \hat{f}(\bm\gamma , L) \mathrm{d} L}{\hat{f}_{\tilde{\gamma}}(\bm\gamma)} 
= \frac{\int_{\RR^q}\exp(uL(\bm\xi))f(\bm\gamma,\bm\xi)\mathrm d\bm\xi}{f_{\tilde{\bm \gamma}}(\bm\gamma)}.
\end{equation}
Since $\hat{f}_{\tilde{\bm \gamma}}(\bm\gamma) = f_{\tilde{\bm \gamma}}(\bm\gamma)$ and $\int_{\mathbb{R}} \exp( u L) \hat{f}(\bm\gamma , L) \mathrm{d} L = \int_{\RR^q}\exp(uL(\bm\xi))f(\bm\gamma,\bm\xi)\mathrm d\bm\xi$, therefore by condition~2 stated in the theorem, condition (B) holds. Following the same argument as above in~\eqref{eq:ConditionExp} and the conditions 1 and 3 stated in the theorem, the first part of condition (C) holds. 
Condition (D) holds as stated in the statement of the theorem. For condition (E), we show that it holds in our setting for $S = 2$.  Firstly, we note that $\int_{\mathbb{R}^p} \theta_j \mathcal{K}(\bm\theta)\mathrm{d}\bm\theta = 0 $, $\forall j \in [p]$, since the expectation of the distribution \eqref{eq:Gaussian_kernel} is $\mathbf{0}$ due to the symmetry of this distribution. Moreover,  
\begin{equation*}
\int_{\mathbb{R}^p} \vert\theta_j^2\mathcal{K}(\bm\theta) \vert \mathrm{d}\bm\theta = \frac{1}{Z} \int_{\mathbb{R}^p}  \Big\vert\theta_j^2\exp\left(-\|\bm \theta\|_2\right) \Big\vert \mathrm{d}\bm\theta \leq \frac{1}{Z} \int_{\mathbb{R}^p}  \Big\vert\theta_j^2\exp\left(- \|\bm \theta\|_1/p\right) \Big\vert \mathrm{d}\bm\theta < \infty,
\end{equation*}
where the last inequality holds because
\begin{equation*}
\int_{\mathbb{R}}  \Big\vert \exp\left(- \vert\theta_i \vert/p\right) \Big\vert \mathrm{d} \theta_i < \infty \quad\text{and}\quad  \int_{\mathbb{R}}  \Big\vert \theta_j^2 \exp\left(- \vert\theta_j \vert/p\right) \Big\vert \mathrm{d} \theta_j <\infty.
\end{equation*}
Thus, part (i) of condition (E) holds. Part (ii) of condition (E) also holds with $S=2$ as stated in the statement of the theorem. Following the same argument that we used in \eqref{eq:ConditionExp} to show that condition (B) is satisfied, part (iii) of condition (E) holds from condition 3 in the statement of the theorem. 

Therefore, we can apply Theorem 2 in \cite{mokkadem:08} which implies that the sequence $\{a_n(\EErho[L(\xit)]-\EErhohat[L(\xit)])\}_{n\in\mathbb N}$ obeys a large deviation principle with speed $nh_n^p/a_n^2$ and rate function 
\begin{equation}
\label{eq:generic_rate}
	I_{\bm\gamma}(y)=\frac{y^2 f_{\tilde{\bm\gamma}}(\bm\gamma)}{2\VVrho[L(\xit)]\int_{\RR^p}\mathcal K^2(\bm \theta)\mathrm d\bm \theta}.
	\end{equation}
\end{proof}

\section{Proof of \Cref{thm:generalization_bound}}
\label{app:generalization_bound}
\begin{proof}
We set the function in \eqref{eq:MDP} to $L(\bm\xi)=\ell(\bm x,\bm\xi)$ and verify that the conditions in Theorem~\ref{thm:MDP} are satisfied. To establish continuity of $\int_{\Xi}\ell(\bm x,\bm\xi)^2f(\bm\gamma,\bm\xi)\mathrm d\bm\xi$ at $\bm\gamma$, we fix  $\epsilon>0$ and show that there exists $\delta>0$ such that
	\begin{equation}
	\label{eq:continuity_in_rho}
	    \|\bm\gamma-\bm\gamma'\|\leq\delta \Longrightarrow \left|\int_{\Xi}\ell(\bm x,\bm\xi)^2f(\bm\gamma,\bm\xi)\mathrm d\bm\xi-\int_{\Xi}\ell(\bm x,\bm\xi)^2f(\bm\gamma',\bm\xi)\mathrm d\bm\xi\right|\leq \epsilon. 
	\end{equation}
Let $\mu(\Xi)$ be the Lebesgue measure of the support set $\Xi$. By assumption \ref{as1}, the following chain of inequalities hold:
	\begin{equation*}
	\begin{array}{rl}
	    \displaystyle\left|\int_{\Xi}\ell(\bm x,\bm\xi)^2f(\bm\gamma,\bm\xi)\mathrm d\bm\xi-\int_{\Xi}\ell(\bm x,\bm\xi)^2f(\bm\gamma',\bm\xi)\mathrm d\bm\xi\right|&\displaystyle\leq \sup_{\bm\xi\in\Xi}\left|\ell(\bm x,\bm\xi)^2\right|\sup_{\bm\xi\in\Xi}|f(\bm\gamma,\bm\xi)-f(\bm\gamma',\bm\xi)|\mu(\Xi)\\
	    &\displaystyle\leq \sup_{\bm\xi\in\Xi}|f(\bm\gamma,\bm\xi)-f(\bm\gamma',\bm\xi)|\mu(\Xi).
	   \end{array}
	\end{equation*}
	We now show that there exists $\delta>0$ such that 
	\begin{equation}
	\label{eq:eps-delta}
		\|\bm\gamma-\bm\gamma'\|\leq\delta\;\Longrightarrow \;\sup_{\bm\xi\in\Xi}|f(\bm\gamma,\bm\xi)-f(\bm\gamma',\bm\xi)|\leq\epsilon/\mu(\Xi),
	\end{equation}
which is sufficient to prove the claim. Suppose for the sake of contradiction the  implication \eqref{eq:eps-delta} does not hold. That is, for any $\delta>0$, there exist $\bm\gamma_\delta'$ with $\|\bm\gamma-\bm\gamma_\delta'\|\leq\delta$ and  $\bm\xi_\delta\in\Xi$ such that $     |f(\bm\gamma,\bm\xi_\delta)-f(\bm\gamma_\delta',\bm\xi_\delta)|>\epsilon/\mu(\Xi)$. 
By construction, we have $\lim_{\delta\rightarrow 0}\bm\gamma_\delta'=\bm\gamma$. Let $\bm\xi^\star$ be a limit point of the sequence $\{\bm\xi_\delta\}$ as $\delta\rightarrow 0$. By the compactness of the support set in assumption \ref{as1} we have $\bm\xi^\star\in\Xi$. The continuity of the density function in assumption \ref{as2} then implies that
    \begin{equation*}
        \epsilon/\mu(\Xi)\leq\lim_{\delta\rightarrow 0}|f(\bm\gamma,\bm\xi_\delta)-f(\bm\gamma_\delta',\bm\xi_\delta)|=|f(\bm\gamma,\bm\xi^\star)-f(\bm\gamma,\bm\xi^\star)|=0,
    \end{equation*} 
which is a contradiction because $\epsilon/\mu(\Xi)>0$. We may thus conclude that the first condition in Theorem \ref{thm:MDP} is indeed satisfied. 
	
By following the same argument, one can show that   $\int_{\RR^q}\exp(u\ell(\bm x,\bm\xi))f(\bm\gamma,\bm\xi)\mathrm d\bm\xi$ is continuous at $\bm\gamma$. The boundedness of the expression holds because $\sup_{\bm\xi\in\Xi}\exp(u\ell(\bm x,\bm\xi))\leq \exp(u)$ for every $u\in\RR$. Thus, the second condition in Theorem \ref{thm:MDP} is also satisfied. Finally, by the Leibniz's rule we have
\begin{equation*}
\begin{array}{ll}
\displaystyle\frac{\partial}{\partial\gamma_i}\int_{\Xi}\ell(\bm x,\bm\xi)f(\bm\gamma,\bm\xi)\mathrm d\bm\xi=\int_{\Xi}\ell(\bm x,\bm\xi)\frac{\partial}{\partial\gamma_i}f(\bm\gamma,\bm\xi)\mathrm d\bm\xi&\forall i\in[p]\qquad\textup{and}\\[4mm]
\displaystyle\frac{\partial^2}{\partial\gamma_i\partial\gamma_j}\int_{\Xi}\ell(\bm x,\bm\xi)f(\bm\gamma,\bm\xi)\mathrm d\bm\xi=\int_{\Xi}\ell(\bm x,\bm\xi)\frac{\partial^2}{\partial\gamma_i\partial\gamma_j}f(\bm\gamma,\bm\xi)\mathrm d\bm\xi&\forall i,j\in[p].
\end{array}
\end{equation*}
Thus, in view of our assumption that $\partial f(\bm\gamma,\bm\xi)/ \partial\gamma_i$  and $\partial^2 f(\bm\gamma,\bm\xi)/ (\partial\gamma_i\partial\gamma_j)$ are continuous and bounded, we may  apply  the  same  argument to conclude that   the third condition in Theorem \ref{thm:MDP} is also satisfied. 
	
Next, let the closed and the open sets  in \eqref{eq:MDP} be defined as $\mathcal C=(-\infty,-\epsilon']\cup[\epsilon',\infty)$ and  $\mathcal O=(-\infty,-\epsilon')\cup(\epsilon',\infty)$, respectively. The function $I_{\bm\gamma}(y)$ is a convex quadratic function centered at $0$, which implies that	    $\inf_{y\in\mathcal C} I_{\bm\gamma}(y)=\inf_{y\in\mathcal O} I_{\bm\gamma}(y)=I_{\bm\gamma}(\epsilon')$.	Thus, we obtain
		\begin{equation}
		\begin{array}{rl}
\displaystyle	- I_{\bm\gamma}(\epsilon')&\displaystyle\leq \;\liminf_{n\rightarrow \infty} \frac{1}{\nu_n}\log \mathbb P\left(a_n\left(\EErho[\ell(\bm x,\xit)]-\EErhohat[\ell(\bm x,\xit)]\right)\in\mathcal O\right)\\[2mm]
&\displaystyle\leq\; 	\limsup_{n\rightarrow \infty} \frac{1}{\nu_n}\log \mathbb P\left(a_n\left(\EErho[\ell(\bm x,\xit)]-\EErhohat[\ell(\bm x,\xit)]\right)\in\mathcal C\right)\leq \;- I_{\bm\gamma}(\epsilon'),
\end{array}
\end{equation}
which gives rise to the stronger result
\begin{equation}
\frac{1}{\nu_n}\log \mathbb P\left(\left|a_n\left(\EErho[\ell(\bm x,\xit)]-\EErhohat[\ell(\bm x,\xit)]\right)\right|\geq\epsilon' \right)= - I_{\bm\gamma}(\epsilon') + o(1).
\end{equation}
Multiplying both sides of the inequality with $\nu_n$, taking exponential, and substituting the definition of $G_{\bm\gamma}(\epsilon)$ yield
\begin{equation*}
\mathbb P\left(\left|a_n\left(\EErho[\ell(\bm x,\xit)]-\EErhohat[\ell(\bm x,\xit)]\right)\right|\geq\epsilon' \right)= \exp\left(- \frac{(\epsilon')^2 \nu_n g(\bm\gamma)}{\VVrho[\ell(\bm x,\xit)]}+o(\nu_n)\right). 
\end{equation*}
Since $\nu_n=n h_n^p/a_n^2$, we have
\begin{equation*}
\mathbb P\left(\left|a_n\left(\EErho[\ell(\bm x,\xit)]-\EErhohat[\ell(\bm x,\xit)]\right)\right|\geq\epsilon'\right)= \exp\left(- \frac{(\epsilon')^2 (n h_n^p/a_n^2) g(\bm\gamma)}{\VVrho[\ell(\bm x,\xit)]}+o(n h_n^p/a_n^2)\right). 
\end{equation*}	
We consider $a_n$'s that are strictly positive (there exists such $a_n$'s. For example, $a_n = \log n$) and denote $\epsilon' = \epsilon a_n$ for some constant $\epsilon$, we obtain
\begin{equation*}
\mathbb P\left(\left|\EErho[\ell(\bm x,\xit)]-\EErhohat[\ell(\bm x,\xit)]\right|\geq\epsilon\right)= \exp\left(- \frac{\epsilon^2 (n h_n^p) g(\bm\gamma)}{\VVrho[\ell(\bm x,\xit)]}+o(n h_n^p/a_n^2)\right). 
\end{equation*}	
Since $g(\bm\gamma)$ and $\VVrho[\ell(\bm x,\xit)]$ are constants for fixed $\bm{x}$ and $\bm\gamma$, the above is equivalent to 
\begin{equation*}
\mathbb P\left(\left|\EErho[\ell(\bm x,\xit)]-\EErhohat[\ell(\bm x,\xit)]\right|\geq\epsilon\right)= \exp\left(- n h_n^p \frac{\epsilon^2 g(\bm\gamma) (1 + o(1/a_n^2))}{\VVrho[\ell(\bm x,\xit)]}\right). 
\end{equation*}	
Since $\lim_{a_n \rightarrow \infty} a_n^2 = \infty$, we complete the proof.
\end{proof}

\section{Proof of \Cref{thm:generalization_bound_continuousX}}
\noindent Before we prove \Cref{thm:generalization_bound_continuousX}, we first obtain some useful results for Lipschitz continuous loss functions in the following lemma.
\label{sec:generalization_bound_continuousX}
\begin{lemma}
\label{lem:lipschitz_loss}
Assume that the loss function $\ell(\bs x,\tilde{\bxi})$  is M-Lipschitz continuous in $\bs{x}$, i.e., there exists a constant $M>0$ such that 
\begin{equation}
\label{eq:lem_obj_fun_lips_assum}
\left| \ell(\bs{x},\bs{\xi}) - \ell(\bs{x}',\bs{\xi}) \right| \leq M \Vert \bs{x} - \bs{x}' \Vert \quad \forall \bs{x}, \bs{x}' \in \mathcal{X}, \ \bs{\xi}\in \Xi.
\end{equation}
Then, for any $\bs \gamma \in \mb R^p$, we have
\begin{equation*}
\begin{array}{rll}
\left\lvert \EErho[\ell(\bs  x,\xit)]  - \EErho[\ell(\bs x',\xit)] \right\vert &\leq \;\; M\Vert \bs{x} - \bs{x}' \Vert & \forall \bs{x}, \bs{x}' \in \mathcal{X},\\
\left\vert  \EErhohat[\ell(\bs  x,\xit)] - \EErhohat[\ell(\bs{x}',\xit)] \right\vert &\leq \;\; M\Vert \bs{x} - \bs{x}' \Vert & \forall \bs{x}, \bs{x}' \in \mathcal{X},\\
\left\vert  \sqrt{\VVrho[\ell(\bs x,\xit)]} - \sqrt{\VVrho[\ell(\bs x',\xit)]} \right\vert &\leq \;\; M\Vert \bs{x} - \bs{x}' \Vert & \forall \bs{x}, \bs{x}' \in \mathcal{X},\\
\left\vert  \sqrt{\VVrhohat[\ell(\bs x,\xit)]} - \sqrt{\VVrhohat[\ell(\bs x',\xit)]} \right\vert &\leq \;\; M\Vert \bs{x} - \bs{x}' \Vert & \forall \bs{x}, \bs{x}' \in \mathcal{X}.
\end{array}
\end{equation*}
\end{lemma}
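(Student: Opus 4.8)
The plan is to observe that all four inequalities are instances of a single statement: for \emph{any} probability measure $\mu$ on $\Xi$, the maps $\bm x\mapsto \EE_\mu[\ell(\bm x,\xit)]$ and $\bm x\mapsto \sqrt{\VV_\mu[\ell(\bm x,\xit)]}$ are $M$-Lipschitz on $\mathcal X$. Indeed, the true conditional expectation $\EErho$ is the expectation under the conditional law $\PP(\,\cdot\mid\gammat=\bm\gamma)$, while the empirical estimator $\EErhohat$ is the expectation under the discrete measure $\hat{\mb P}_{\bm\gamma}=\sum_{i=1}^n\overline w_i\delta_{\bm\xi^i}$; since the NW weights satisfy $\overline w_i\ge 0$ and $\sum_i\overline w_i=1$, this is a bona fide probability measure, and $\VVrho,\VVrhohat$ are the variances under these two measures. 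Proving the abstract statement therefore settles all four cases at once.

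For the expectation maps (the first two inequalities), I would use only linearity and monotonicity of $\EE_\mu$ together with the Lipschitz hypothesis \eqref{eq:lem_obj_fun_lips_assum}: writing $|\EE_\mu[\ell(\bm x,\xit)]-\EE_\mu[\ell(\bm x',\xit)]| = |\EE_\mu[\ell(\bm x,\xit)-\ell(\bm x',\xit)]| \le \EE_\mu[|\ell(\bm x,\xit)-\ell(\bm x',\xit)|]\le \EE_\mu[M\|\bm x-\bm x'\|]=M\|\bm x-\bm x'\|$, where the middle step is Jensen's inequality for the absolute value and the last uses that $\mu$ is a probability measure.

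The two standard-deviation inequalities are the substantive part. The key structural fact I would invoke is that the standard deviation $\mathrm{sd}_\mu(\tilde Y)=\sqrt{\VV_\mu[\tilde Y]}=\|\tilde Y-\EE_\mu[\tilde Y]\|_{L^2(\mu)}$ is a \emph{seminorm} on random variables: homogeneity is clear, and the triangle inequality $\mathrm{sd}_\mu(\tilde Y+\tilde Z)\le \mathrm{sd}_\mu(\tilde Y)+\mathrm{sd}_\mu(\tilde Z)$ is exactly Minkowski's inequality applied to the centered variables $\tilde Y-\EE_\mu[\tilde Y]$ and $\tilde Z-\EE_\mu[\tilde Z]$. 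The reverse triangle inequality for seminorms then yields, with $\tilde Y=\ell(\bm x,\xit)$ and $\tilde Z=\ell(\bm x',\xit)$, the bound $|\mathrm{sd}_\mu(\tilde Y)-\mathrm{sd}_\mu(\tilde Z)|\le \mathrm{sd}_\mu(\tilde Y-\tilde Z)=\sqrt{\VV_\mu[\ell(\bm x,\xit)-\ell(\bm x',\xit)]}$. To finish I would bound this last quantity by dropping the centering: since the variance is the minimal mean-square deviation over all centerings, $\VV_\mu[W]\le \EE_\mu[W^2]$ for $W=\ell(\bm x,\xit)-\ell(\bm x',\xit)$, while the Lipschitz hypothesis gives $W^2\le M^2\|\bm x-\bm x'\|^2$ pointwise in $\xit$, so that $\EE_\mu[W^2]\le M^2\|\bm x-\bm x'\|^2$ and hence $\mathrm{sd}_\mu(\tilde Y-\tilde Z)\le M\|\bm x-\bm x'\|$.

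I expect the only real subtlety to be recognizing the seminorm/reverse-triangle-inequality route for the standard deviation; attacking the difference of square roots of variances directly leads to messy algebra, whereas the Minkowski viewpoint makes the Lipschitz constant come out as exactly $M$ with no loss. Everything else is routine, and treating $\EErho,\EErhohat$ (respectively $\VVrho,\VVrhohat$) uniformly as expectation (respectively variance) under a generic probability measure avoids duplicating the argument for the true and empirical cases.
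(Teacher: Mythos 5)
Your proof is correct, and it takes a recognizably different route from the paper's even though both arguments rest on the same two ingredients: the reverse triangle inequality for an $L^2$-type seminorm and the pointwise bound $\vert \ell(\bm x,\bm\xi)-\ell(\bm x',\bm\xi)\vert\leq M\Vert \bm x-\bm x'\Vert$. The paper reaches the standard-deviation inequalities through the variational characterization $\sqrt{\VVrho[\ell(\bm x,\xit)]}=\min_{t\in[0,1]}\sqrt{\EErho[(\ell(\bm x,\xit)-t)^2]}$ (reused from the proof of Proposition~\ref{prop:bound_sample_stddev}): it assumes without loss of generality which side is larger, plugs the competitor's minimizer $t'$ into the other objective, and only then applies the reverse triangle inequality to the seminorm $\sqrt{\EErho[(\cdot)^2]}$ before invoking Lipschitz continuity; the empirical case is dispatched with ``the same argument.'' You instead observe that the standard deviation is itself a seminorm (Minkowski applied to the centered variables), so the chain $\vert\mathrm{sd}_\mu(\tilde Y)-\mathrm{sd}_\mu(\tilde Z)\vert\leq\mathrm{sd}_\mu(\tilde Y-\tilde Z)\leq\sqrt{\EE_\mu[(\tilde Y-\tilde Z)^2]}\leq M\Vert\bm x-\bm x'\Vert$ settles the matter in one sweep, with no case split, no auxiliary minimizer, and no use of the restriction $t\in[0,1]$ (which in the paper's version silently relies on Assumption (A1) placing the conditional mean in $[0,1]$, whereas your argument needs no boundedness at all). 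Your unification of the four claims under a generic probability measure $\mu$ is also sound --- the NW weights are nonnegative and sum to one, so $\EErhohat$ and $\VVrhohat$ are genuinely the mean and variance under the discrete measure $\sum_{i=1}^n\overline{w}_i\delta_{\bm\xi^i}$ --- and it makes explicit what the paper leaves implicit. What the paper's route buys is economy of machinery, since the $\min_t$ characterization is needed anyway for Proposition~\ref{prop:bound_sample_stddev} and the MISOCP reformulation; what yours buys is a shorter, slightly more general proof with the Lipschitz constant emerging as exactly $M$ and both the true and empirical cases handled at once.
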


\begin{proof}
The first two inequalities above can be verified by directly applying \eqref{eq:lem_obj_fun_lips_assum}. To show that the third inequality holds, we use the observation used in the proof of Proposition \ref{prop:bound_sample_stddev} that for any $\bs{x} \in \mathcal{X}$,
\begin{equation*}
\sqrt{\VVrho[\ell(\bs x,\xit)]}=\min_{t\in[0,1]}\sqrt{ \EErho[(\ell(\bs x,\xit)-t)^2]}.
\end{equation*}
Without any loss of generality, we assume $\sqrt{\VVrho[\ell(\bs x,\xit)]} \geq \sqrt{\VVrho[\ell(\bs x',\xit)]}$, and we obtain
\begin{equation*}
\begin{array}{rll}
\sqrt{\VVrho[\ell(\bs x,\xit)]} - \sqrt{\VVrho[\ell(\bs x',\xit)]}  &\displaystyle = \;\; \min_{t\in[0,1]}\sqrt{ \EErho[(\ell(\bs x,\xit)-t)^2]} - \min_{t\in[0,1]}\sqrt{ \EErho[(\ell(\bs{x}',\xit)-t)^2]} \\
&\displaystyle \leq \;\; \sqrt{ \EErho[(\ell(\bs x,\xit)-t')^2]} - \sqrt{ \EErho[(\ell(\bs{x}',\xit)-t')^2]} ,
\end{array}
\end{equation*}
where $t' = \underset{t\in[0,1]}{\arg\min}\sqrt{ \EErho[(\ell(\bs{x}',\xit)-t')^2]}$. 

Next, we note that the function $\sqrt{ \EErho[(\cdot)^2]}$ constitutes a semi-norm, which gives us that 
\begin{equation*}
\begin{array}{rll}
\sqrt{\VVrho[\ell(\bs x,\xit)]} - \sqrt{\VVrho[\ell(\bs x',\xit)]} &\displaystyle \leq \;\; \sqrt{ \EErho[(\ell(\bs x,\xit)-t')^2]} - \sqrt{ \EErho[(\ell(\bs{x}',\xit)-t')^2]}  \\
&\displaystyle \stackrel{(i)}{\leq} \;\; \sqrt{ \EErho[(\ell(\bs x,\xit)-t'-\ell(\bs{x}',\xit)+t')^2]} \\
&\displaystyle \leq \;\; \sqrt{ \EErho[(\ell(\bs x,\xit)-\ell(\bs{x}',\xit))^2]} \\
&\displaystyle \stackrel{(ii)}{\leq} \;\; M\Vert \bs{x} - \bs{x}' \Vert.
\end{array}
\end{equation*}
Here the inequality $(i)$ follows from the reverse triangle inequality while inequality $(ii)$ is obtained by noting that $\ell(\bs x,\bs \xi)$ is $M$-Lipschitz continuous for all $\bs \xi \in \Xi$. Thus, we have verified the third inequality. Using the same argument, we can show that the fourth inequality also holds.
\end{proof}

\begin{proof}[Proof of \Cref{thm:generalization_bound_continuousX}]
\noindent From \Cref{cor:generalization_bound}, we have that for a fixed $\bs x \in \mathcal{X}$, 
\begin{equation}
\begin{aligned}
\mb E_{\bs\gm}[\ell(\bs x,\tilde{\bxi})]\leq \hat{\mb E}_{\bs\gm}[\ell(\bs x,\tilde{\bxi})]+\sqrt{\frac{\mb V_{\bs\gm}[\ell(\bs x,\tilde{\bxi})]}{ n h_n^pg(\bs\gamma)(1+o(1))}\log\left(\frac{1}{\delta}\right)}
\end{aligned}
\end{equation}
with probability $1-\delta$.
Next, define a finite set of points $\mathcal{X}_\eta \subset \mathcal{X}$ such that for any $\bs x\in\mathcal{X}$, there exists some $\bs x'\in\mathcal{X}_\eta$ such that $\Vert \bs x - \bs x' \Vert \leq \eta.$ From \cite{SN2005}, we know that $\vert \mathcal{X}_{\eta} \vert = \mathcal{O}(1) (D/\eta)^d$. 
Since the loss function $\ell(\bs x,\tilde{\bxi})$  is $M$-Lipschitz continuous in $\bs x$, from \Cref{lem:lipschitz_loss}, we have that for any $\bs x \in \mathcal{X}$, there exists some $\bs x'\in \mathcal{X}_\eta$, such that $\lVert \bs x - \bs x'\rVert\leq \eta$ and the following condition holds:
\begin{equation}
\label{eq:lipschitz}
\begin{aligned}
\mb E_{\bs\gm}[\ell(\bs x,\tilde{\bxi})]&\leq \mb E_{\bs\gm}[\ell(\bs x',\tilde{\bxi})] + M\eta.
\end{aligned}
\end{equation}
In addition, from \Cref{cor:generalization_bound}, we note that for a fixed $\bs x' \in \mathcal{X}_\eta$
\begin{equation}
\begin{aligned}
\mb E_{\bs\gm}[\ell(\bs x',\tilde{\bxi})]&\leq \hat{\mb E}_{\bs\gm}[\ell(\bs x',\tilde{\bxi})]+\sqrt{\frac{\mb V_{\bs\gm}[\ell(\bs x',\tilde{\bxi})]}{ n h_n^pg(\bs\gamma)(1+o(1))}\log\left(\frac{1}{\delta}\right)}
\end{aligned}
\end{equation}
with probability at least $1-\delta$. Applying union bound, we get that for all $\bs x' \in \mc{X}_\eta$
\begin{equation}
\label{eq:lipschitz_union}
\begin{aligned}
\mb E_{\bs\gm}[\ell(\bs x',\tilde{\bxi})]&\leq \hat{\mb E}_{\bs\gm}[\ell(\bs x',\tilde{\bxi})]+\sqrt{\frac{\mb V_{\bs\gm}[\ell(\bs x',\tilde{\bxi})]}{ n h_n^pg(\bs\gamma)(1+o(1))}\log\left(\frac{\lvert \mathcal X_\eta \rvert}{\delta}\right)}
\end{aligned}
\end{equation}
with probability at least $1-\delta$. Combining the bounds in \eqref{eq:lipschitz} and \eqref{eq:lipschitz_union}, we get that for any $\bs x \in \mathcal{X}$, there exists some $\bs x'\in \mathcal{X}_\eta$, such that $\lVert \bs x - \bs x'\rVert\leq \eta$ and  
\begin{equation*}
\begin{aligned}
\mb E_{\bs\gm}[\ell(\bs x,\tilde{\bxi})]&\leq \hat{\mb E}_{\bs\gm}[\ell(\bs x',\tilde{\bxi})]+\sqrt{\frac{\mb V_{\bs\gm}[\ell(\bs x',\tilde{\bxi})]}{ n h_n^pg(\bs\gamma)(1+o(1))}\log\left(\frac{\lvert \mathcal X_\eta \rvert}{\delta}\right)}+M\eta
\end{aligned}
\end{equation*}
with probability $1-\delta$. Again, using the Lipschitz continuity of $\ell(\bs x,\bs\xi)$, from \Cref{lem:lipschitz_loss}, we get
\begin{equation*}
\begin{aligned}
\mb E_{\bs\gm}[\ell(\bs x,\tilde{\bxi})]&\leq \hat{\mb E}_{\bs\gm}[\ell(\bs x,\tilde{\bxi})]+\sqrt{\frac{\mb V_{\bs\gm}[\ell(\bs x,\tilde{\bxi})]}{ n h_n^pg(\bs\gamma)(1+o(1))}\log\left(\frac{\lvert \mathcal X_\eta \rvert}{\delta}\right)}+  M\eta \left(1 + \sqrt{\frac{\log\left(\frac{\lvert \mathcal X_\eta \rvert}{\delta}\right)}{ n h_n^pg(\bs\gamma)(1+o(1))}} \right)
\end{aligned}
\end{equation*}
with probability at least $1-\delta$. 
\end{proof}

\section{Proofs of  \Cref{prop:generalizationbound-highdim} and \Cref{prop:generalizationbound-highdim2}} \label{sec:proof_generalizationbound-highdim}

Before we prove the result in \Cref{prop:generalizationbound-highdim}, we first present below the statement of the Davis-Kahan Theorem and some useful results about sub-gaussian random vectors.

\begin{theorem}[Davis-Kahan Theorem (Theorem 2 in \citep{yu2014useful})] \label{thm:Davis-Kahan}
Let $\bm \Sigma,\hat{\bm \Sigma}\in\mathbb{R}^{p\times p}$ be symmetric with eigenvalues $\lambda_1\geq\ldots\geq\lambda_p$ and $\hat{\lambda}_1\geq\ldots\geq\hat{\lambda}_p$, respectively. Fix $1\leq s \leq r \leq p$ and assume that min$(\lambda_{s-1} - \lambda_s, \lambda_{r}-\lambda_{r+1})>0$, where $\lambda_0:=\infty$ and $\lambda_{p+1}:=-\infty$. Let $d=r-s+1$, and let $\bmath U=[\bmath u_s,\bmath u_{s+1},\ldots,\bmath u_r] \in\mathbb{R}^{p\times d}$ and $ \hat{\bmath U}=[\hat{\bmath u}_s,\hat{\bmath u}_{s+1},\ldots,\hat{\bmath u}_r]\in\mathbb{R}^{p\times d}$ have orthonormal columns satisfying $\bm \Sigma \bmath u_j=\lambda_j \bmath u_j$ and $\hat{\bm \Sigma} \hat{\bmath u}_j=\hat{\lambda}_j \hat{\bmath u}_j$ for $j=s,s+1,\ldots,r$. Then, there exists an orthogonal matrix ${\bmath O}\in\mathbb{R}^{d\times d}$ such that
\begin{equation}
\lVert \bmath U{\bmath O} -\hat{\bmath U} \rVert_{\fr} \leq \frac{2^{3/2}d^{1/2}\lVert\hat{\bm \Sigma}-\bm \Sigma\rVert_{2}}{\min(\lambda_{s-1}-\lambda_{s},\lambda_r-\lambda_{r+1})}.
\end{equation}
\end{theorem}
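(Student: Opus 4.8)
The plan is to prove this Procrustes-flavoured bound in two stages: first reduce the left-hand side to the Frobenius $\sin\Theta$ distance between the two eigenspaces, and then bound that distance by the perturbation $\bmath E := \hat{\bm\Sigma}-\bm\Sigma$ via a Sylvester-equation argument. Throughout, let $\theta_1,\ldots,\theta_d\in[0,\pi/2]$ be the principal angles between $\vspan(\bmath U)$ and $\vspan(\hat{\bmath U})$, defined through $\cos\theta_i=\sigma_i(\bmath U^\top\hat{\bmath U})$, and write $\sin\Theta=\diag(\sin\theta_1,\ldots,\sin\theta_d)$.

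\emph{Step 1 (reduction to $\sin\Theta$).} Since $\bmath U$ and $\hat{\bmath U}$ have orthonormal columns, for any orthogonal $\bmath O$ we have $\lVert\bmath U\bmath O-\hat{\bmath U}\rVert_{\fr}^2=2d-2\trace(\bmath O^\top\bmath U^\top\hat{\bmath U})$. The orthogonal Procrustes problem $\max_{\bmath O}\trace(\bmath O^\top\bmath U^\top\hat{\bmath U})$ is solved by taking $\bmath O=\bmath W_1\bmath W_2^\top$ from the singular value decomposition $\bmath U^\top\hat{\bmath U}=\bmath W_1\diag(\cos\theta_i)\bmath W_2^\top$, with optimal value $\sum_i\cos\theta_i$. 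Fixing this $\bmath O$ and using $1-\cos\theta_i\le\sin^2\theta_i$ (valid because $\cos\theta_i\ge 0$) yields $\lVert\bmath U\bmath O-\hat{\bmath U}\rVert_{\fr}^2\le 2\sum_i\sin^2\theta_i=2\lVert\sin\Theta\rVert_{\fr}^2$, i.e.\ $\lVert\bmath U\bmath O-\hat{\bmath U}\rVert_{\fr}\le\sqrt2\,\lVert\sin\Theta\rVert_{\fr}$.

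\emph{Step 2 (bounding $\sin\Theta$).} Let $\bmath U_\perp\in\RR^{p\times(p-d)}$ collect the remaining eigenvectors of $\bm\Sigma$, so that $\bm\Sigma\bmath U=\bmath U\Lambda_1$, $\bm\Sigma\bmath U_\perp=\bmath U_\perp\Lambda_2$ and $\hat{\bm\Sigma}\hat{\bmath U}=\hat{\bmath U}\hat\Lambda_1$, where $\Lambda_1,\hat\Lambda_1$ hold the selected eigenvalues and $\Lambda_2$ the unselected ones. A standard identity gives $\lVert\sin\Theta\rVert_{\fr}=\lVert\bmath U_\perp^\top\hat{\bmath U}\rVert_{\fr}$, and subtracting $\bmath U_\perp^\top\bm\Sigma\hat{\bmath U}$ from $\bmath U_\perp^\top\hat{\bm\Sigma}\hat{\bmath U}$ produces the Sylvester relation $(\bmath U_\perp^\top\hat{\bmath U})\hat\Lambda_1-\Lambda_2(\bmath U_\perp^\top\hat{\bmath U})=\bmath U_\perp^\top\bmath E\hat{\bmath U}$. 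Read entrywise, the $(j,i)$ entry of $\bmath U_\perp^\top\hat{\bmath U}$ equals the corresponding entry of $\bmath U_\perp^\top\bmath E\hat{\bmath U}$ divided by the gap $\hat\lambda_i-(\Lambda_2)_{jj}$, so it suffices to lower-bound all such gaps by $\delta/2$, where $\delta=\min(\lambda_{s-1}-\lambda_s,\lambda_r-\lambda_{r+1})$. I would argue this by dichotomy: if $\lVert\bmath E\rVert_2\ge\delta/2$ the claim is trivial, since $\lVert\sin\Theta\rVert_{\fr}\le\sqrt d$ while the target right-hand side is then at least $\sqrt d$; otherwise $\lVert\bmath E\rVert_2<\delta/2$, and Weyl's inequality gives $|\hat\lambda_i-\lambda_i|<\delta/2$ for each selected $i$. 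Splitting $\bmath U_\perp$ into the block with eigenvalues $\ge\lambda_{s-1}$ and the block with eigenvalues $\le\lambda_{r+1}$, the definition of $\delta$ combined with this Weyl bound forces $|\hat\lambda_i-(\Lambda_2)_{jj}|>\delta/2$ in both blocks. Hence $\lVert\bmath U_\perp^\top\hat{\bmath U}\rVert_{\fr}\le\tfrac{2}{\delta}\lVert\bmath U_\perp^\top\bmath E\hat{\bmath U}\rVert_{\fr}\le\tfrac{2}{\delta}\lVert\bmath E\hat{\bmath U}\rVert_{\fr}\le\tfrac{2}{\delta}\lVert\bmath E\rVert_2\lVert\hat{\bmath U}\rVert_{\fr}=\tfrac{2\sqrt d}{\delta}\lVert\bmath E\rVert_2$. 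Combining with Step 1 gives $\lVert\bmath U\bmath O-\hat{\bmath U}\rVert_{\fr}\le\sqrt2\cdot\tfrac{2\sqrt d}{\delta}\lVert\bmath E\rVert_2=\tfrac{2^{3/2}d^{1/2}}{\delta}\lVert\bmath E\rVert_2$, as required.

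I expect the main obstacle to be Step 2, specifically securing the spectral separation in terms of the \emph{unperturbed} gap $\delta$ rather than a perturbed gap that would force a small-perturbation restriction. The clean resolution is the dichotomy above: the regime $\lVert\bmath E\rVert_2\ge\delta/2$ absorbs large perturbations trivially, while in the complementary regime Weyl's inequality moves the selected eigenvalues by less than $\delta/2$, which is exactly what converts the unperturbed gap $\delta$ into an effective gap $\delta/2$ and accounts for the factor $2$ in the final constant. The remaining work is routine bookkeeping: verifying the principal-angle identity $\lVert\sin\Theta\rVert_{\fr}=\lVert\bmath U_\perp^\top\hat{\bmath U}\rVert_{\fr}$ and checking that the entrywise division in the Sylvester relation is legitimate for each of the two complementary eigenvalue blocks.
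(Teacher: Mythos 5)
The paper does not prove this statement at all: it is imported verbatim, with citation, from Yu, Wang and Samworth (2014), and is used as a black box in the proof of Lemma~\ref{lem:rho_bound}. So the only meaningful comparison is with the cited source, and your argument is in fact a correct, self-contained reconstruction of exactly that proof. Your Step 1 (Procrustes reduction giving $\lVert \bmath U\bmath O-\hat{\bmath U}\rVert_{\fr}\le\sqrt{2}\,\lVert\sin\Theta\rVert_{\fr}$, using $1-\cos\theta\le\sin^2\theta$ for $\theta\in[0,\pi/2]$) and your dichotomy are precisely the Yu--Wang--Samworth devices: when $\lVert\bmath E\rVert_2\ge\delta/2$ the Step-2 claim is vacuous since $\lVert\sin\Theta\rVert_{\fr}\le\sqrt{d}$, and otherwise Weyl's inequality converts the \emph{unperturbed} gap $\delta$ into an effective mixed gap $\delta/2$ between the perturbed selected eigenvalues and the unperturbed unselected ones --- this is exactly the innovation that removes the small-perturbation restriction of the classical theorem and accounts for the factor $2$. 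The one place you genuinely depart from the source is that Yu et al.\ at this point invoke the original Davis--Kahan $\sin\Theta$ theorem, whereas you prove the needed bound $\lVert \bmath U_\perp^\top\hat{\bmath U}\rVert_{\fr}\le \tfrac{2}{\delta}\lVert \bmath U_\perp^\top\bmath E\hat{\bmath U}\rVert_{\fr}$ from scratch via the Sylvester relation $(\bmath U_\perp^\top\hat{\bmath U})\hat{\Lambda}_1-\Lambda_2(\bmath U_\perp^\top\hat{\bmath U})=\bmath U_\perp^\top\bmath E\hat{\bmath U}$ and entrywise division; what this buys is a fully elementary proof requiring nothing beyond Weyl, the principal-angle identity $\lVert\sin\Theta\rVert_{\fr}=\lVert\bmath U_\perp^\top\hat{\bmath U}\rVert_{\fr}$ (which follows from $\hat{\bmath U}^\top\bmath U\bmath U^\top\hat{\bmath U}+\hat{\bmath U}^\top\bmath U_\perp\bmath U_\perp^\top\hat{\bmath U}=\bmath I_d$), and $\lVert \bmath A\bmath B\rVert_{\fr}\le\lVert\bmath A\rVert_2\lVert\bmath B\rVert_{\fr}$. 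I verified the remaining bookkeeping: both unselected blocks ($\lambda_j\ge\lambda_{s-1}$ and $\lambda_j\le\lambda_{r+1}$) do yield $|\hat{\lambda}_i-(\Lambda_2)_{jj}|>\delta/2$ under $\lVert\bmath E\rVert_2<\delta/2$, the edge cases $s=1$ or $r=p$ degenerate harmlessly (empty blocks, $\lambda_0=\infty$, $\lambda_{p+1}=-\infty$), and the constants chain to $2^{3/2}d^{1/2}\lVert\bmath E\rVert_2/\delta$ as claimed. No gaps.
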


\begin{lemma}[Covariance Estimation for Sub-Gaussian distributions (Corollary 5.50 in \citep{vershynin2010introduction})]
\label{lem:CovarianceMatrixSubGaussian}
Consider a sub-gaussian probability distribution in $\mb R^p$ with true covariance matrix $\bm \Sigma$ and sample covariance matrix $\hat{\bm \Sigma}$ constructed from $n$ i.i.d.~observations. Let~$\epsilon \in (0,1)$ and $t\geq 1$. Then, with probability at least $1-2\exp(-t^2 p)$, we have
\begin{equation}
\lVert \hat{\bm \Sigma} - \bm \Sigma\rVert_2 \leq \epsilon \ \text{\rm provided} \ n\geq C\bigg(\frac{t}{\epsilon}\bigg)^2 p.
\end{equation}
Here, $C$ is a constant that depends only on the sub-gaussian parameter $\sigma$ for the distribution.
\end{lemma}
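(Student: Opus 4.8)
The plan is to follow the classical $\epsilon$-net argument for sample covariance matrices with sub-gaussian rows. Assume without loss of generality that the distribution is centered (otherwise one subtracts the sample mean, which perturbs $\hat{\bm \Sigma}$ only by a term of lower order that is absorbed into the constant), and write
\begin{equation*}
\hat{\bm \Sigma} - \bm \Sigma = \frac{1}{n}\sum_{i=1}^n\big(\bm X_i \bm X_i^\top - \bm \Sigma\big),
\end{equation*}
where $\bm X_1,\ldots,\bm X_n \in \RR^p$ are the i.i.d.\ observations. Since $\hat{\bm \Sigma}-\bm \Sigma$ is symmetric, its operator norm equals $\sup_{\|\bm u\|=1}\big|\bm u^\top(\hat{\bm \Sigma}-\bm \Sigma)\bm u\big|$. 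First I would discretize the unit sphere $S^{p-1}$: fix a $1/4$-net $\mathcal N \subset S^{p-1}$, which may be chosen with $|\mathcal N|\le 9^p$, and invoke the standard comparison $\|\hat{\bm \Sigma}-\bm \Sigma\|_2 \le 2\max_{\bm u\in\mathcal N}\big|\bm u^\top(\hat{\bm \Sigma}-\bm \Sigma)\bm u\big|$. This converts a supremum over a continuum into a maximum over finitely many directions, at the cost of a factor $2$ and the cardinality bound $9^p$.

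Next I would establish concentration along a single fixed direction $\bm u\in S^{p-1}$. Writing
\begin{equation*}
\bm u^\top(\hat{\bm \Sigma}-\bm \Sigma)\bm u = \frac{1}{n}\sum_{i=1}^n\Big(\langle \bm X_i,\bm u\rangle^2 - \mathbb E\big[\langle \bm X_i,\bm u\rangle^2\big]\Big),
\end{equation*}
each scalar $\langle \bm X_i,\bm u\rangle$ is sub-gaussian with sub-gaussian norm at most $\sigma$, so its square is sub-exponential and the centered summands are i.i.d.\ sub-exponential with sub-exponential norm bounded by a constant multiple of $\sigma^2$. Bernstein's inequality for sums of independent sub-exponential variables then yields, for this fixed $\bm u$,
\begin{equation*}
\mathbb P\Big(\big|\bm u^\top(\hat{\bm \Sigma}-\bm \Sigma)\bm u\big|\ge s\Big)\le 2\exp\!\Big(-c\,n\min\big\{s^2/\sigma^4,\ s/\sigma^2\big\}\Big)
\end{equation*}
for an absolute constant $c>0$.

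Finally I would union bound over $\mathcal N$. Setting $s=\epsilon/2$ and using $|\mathcal N|\le 9^p$, the failure probability is at most $2\cdot 9^p\exp\!\big(-c\,n\min\{\epsilon^2/\sigma^4,\epsilon/\sigma^2\}\big)$. Since $\epsilon\in(0,1)$, the quadratic regime $\epsilon^2/\sigma^4$ governs the minimum (the linear regime is handled analogously), so the exponent is $-c\,n\epsilon^2/\sigma^4$. Absorbing the factor $9^p=e^{p\log 9}$ requires $c\,n\epsilon^2/\sigma^4 \ge (t^2+\log 9)\,p$; because $t\ge 1$ we have $t^2+\log 9 \le (1+\log 9)t^2$, so choosing $n\ge C(t/\epsilon)^2 p$ with $C$ depending only on $\sigma$ leaves a surviving exponent of at least $t^2 p$, giving $\|\hat{\bm \Sigma}-\bm \Sigma\|_2\le\epsilon$ with probability at least $1-2\exp(-t^2 p)$.

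The main obstacle is the bookkeeping at the last step: one must correctly balance the net cardinality $9^p$ against the Bernstein exponent and handle the two Bernstein regimes (quadratic versus linear in $s$) so that the sample complexity emerges precisely as $(t/\epsilon)^2 p$ with a constant depending only on the sub-gaussian parameter $\sigma$. The other technical point requiring care is the verification that $\langle \bm X_i,\bm u\rangle^2$ is sub-exponential with the stated norm, which rests on the standard equivalence between the sub-gaussian and sub-exponential characterizations.
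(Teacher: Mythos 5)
Your reconstruction is correct, and it coincides with the proof of the cited result: the paper itself gives no proof of this lemma (it is imported as Corollary 5.50 of \cite{vershynin2010introduction}), and the source establishes it by precisely your route---a $1/4$-net of $S^{p-1}$ of cardinality at most $9^p$, the factor-$2$ comparison of the operator norm with the maximum over the net, Bernstein's inequality for the sub-exponential variables $\langle \bm X_i,\bm u\rangle^2-\mathbb E[\langle \bm X_i,\bm u\rangle^2]$, and a union bound balanced against $9^p$. The one imprecision is your assertion that $\epsilon\in(0,1)$ makes the quadratic Bernstein regime govern the minimum (that is literally true only when $\epsilon\le\sigma^2$), but since $\epsilon\ge\epsilon^2$ the linear regime still yields an exponent of at least $c\,n\epsilon^2/\sigma^2$, so both regimes are absorbed into the $\sigma$-dependent constant $C$, exactly as your parenthetical suggests.
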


\begin{lemma}[Theorem 1.19 in \citep{rigollet2015high}]\label{lem:SubGaussianLength}
Let $\tilde{\bm \gamma}\in \mb R^d$ be a sub-gaussian random vector with sub-gaussian parameter $\sigma$. Then, with probability $1-\delta$, we have 
\begin{equation*}
\| \tilde{\bm \gamma} \| \leq 4\sigma\sqrt{d}+2\sigma\sqrt{2\log\bigg(\frac{1}{\delta}\bigg)}
\end{equation*}
for some $\delta\in(0,1)$.  
\end{lemma}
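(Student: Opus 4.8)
The plan is to realize the Euclidean norm as a supremum of linear functionals over the unit sphere and then discretize this supremum with a covering-number (net) argument, reducing the problem to a union bound over finitely many one-dimensional sub-Gaussian tails. Throughout I use the operator definition of a sub-Gaussian vector: for every unit vector $\bm u\in S^{d-1}$, the scalar $\bm u^\top\tilde{\bm\gamma}$ is sub-Gaussian with parameter $\sigma$, so that $\PP(\bm u^\top\tilde{\bm\gamma}>t)\le \exp(-t^2/(2\sigma^2))$ for all $t>0$.

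First I would fix a $\tfrac12$-net $\mathcal N$ of $S^{d-1}$, i.e.\ a finite set such that every point of the sphere lies within Euclidean distance $\tfrac12$ of some element of $\mathcal N$. A standard volumetric bound gives $\lvert\mathcal N\rvert\le(1+2/(1/2))^d=5^d$. The net is then used through the elementary approximation inequality
\[
\lVert\tilde{\bm\gamma}\rVert=\sup_{\bm u\in S^{d-1}}\bm u^\top\tilde{\bm\gamma}\le 2\max_{\bm v\in\mathcal N}\bm v^\top\tilde{\bm\gamma},
\]
which follows by writing $\bm u^\top\tilde{\bm\gamma}=\bm v^\top\tilde{\bm\gamma}+(\bm u-\bm v)^\top\tilde{\bm\gamma}\le\bm v^\top\tilde{\bm\gamma}+\tfrac12\lVert\tilde{\bm\gamma}\rVert$ for the nearest net point $\bm v$, taking the supremum over $\bm u$, and rearranging.

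Next I would apply a union bound over the net. Since each $\bm v^\top\tilde{\bm\gamma}$ obeys the sub-Gaussian tail bound above,
\[
\PP\Big(\max_{\bm v\in\mathcal N}\bm v^\top\tilde{\bm\gamma}>s\Big)\le \lvert\mathcal N\rvert\,\exp\!\Big(-\tfrac{s^2}{2\sigma^2}\Big)\le 5^d\exp\!\Big(-\tfrac{s^2}{2\sigma^2}\Big).
\]
Setting the right-hand side equal to $\delta$ yields $s=\sigma\sqrt{2\big(d\log 5+\log(1/\delta)\big)}$, so with probability at least $1-\delta$ the approximation inequality gives $\lVert\tilde{\bm\gamma}\rVert\le 2s=2\sigma\sqrt{2d\log 5+2\log(1/\delta)}$. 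Finally I would split the square root via $\sqrt{a+b}\le\sqrt a+\sqrt b$ and observe that the numerical constant satisfies $2\sqrt{2\log 5}\approx 3.59\le 4$, which collapses the bound to the claimed form $\lVert\tilde{\bm\gamma}\rVert\le 4\sigma\sqrt d+2\sigma\sqrt{2\log(1/\delta)}$.

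The routine parts are the net-cardinality bound and the one-dimensional tail estimate; the only point requiring care is the bookkeeping of constants—choosing precisely a $\tfrac12$-net (rather than a generic $\epsilon$-net) is what makes $2\sqrt{2\log 5}$ fall below $4$ and lets the stated constant be recovered exactly. If instead one adopts the moment or Orlicz-norm definition of sub-Gaussianity, a preliminary centering step relating $\lVert\tilde{\bm\gamma}\rVert$ to its mean would be needed, but under the directional tail definition used here no such step is required.
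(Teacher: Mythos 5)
Your proposal is correct, and it is essentially the paper's argument: the paper states this lemma without proof, citing Theorem~1.19 of \citet{rigollet2015high}, whose proof is exactly your $\tfrac{1}{2}$-net discretization of the sphere plus a union bound over the net (the source uses the slightly looser net cardinality $6^d$, giving constant $2\sqrt{2\log 6}\approx 3.79\leq 4$, whereas your $5^d$ gives $2\sqrt{2\log 5}\approx 3.59\leq 4$; either recovers the stated constants). Your closing caveat is also apt: the tail-based directional definition you adopt implicitly presumes $\EE[\tilde{\bm\gamma}]=\bm 0$, which matches the mean-zero convention in the cited source, so no centering step is needed.
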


\begin{lemma}\label{lem:rho_bound}
Consider a sub-gaussian random vector~$\tilde{\bm \gamma} \in \mb R^p$ with sub-gaussian parameter $\sigma$. Let $\bm \Sigma,\hat{\bm \Sigma}\in\mb R^{p\times p}$ denote respectively the true covariance matrix for $\tilde{\bm \gamma}$ and the sample covariance matrix estimated from $n$ i.i.d.~observations of $\tilde{\bm \gamma}$. Let $\bmath U$ and $\hat{\bmath U}$ be the matrices whose columns comprise the top $p'$ eigenvectors of these covariance matrices. Suppose $\tilde{\bm\rho},\hat{\bm\rho} \in\mb R^{p'}$ denote respectively the true and estimated projections of $\tilde{\bm \gamma}\in \mb R^p$ onto the subspace spanned by the columns of $\bmath U$ and $\hat{\bmath U}$. Then, with probability at least $1- 2\delta $, we have under some basis coordinate system
\begin{equation}\label{eq:lemma_rho_bound}
\lVert \tilde{\bs\rho}-\hat{\bs\rho} \rVert_2 \leq  \frac{C}{\lambda_{p'} - \lambda_{p'+1}} \sqrt{\frac{p'}{n} \log \left(\frac{2}{\delta}\right)} \gamma_{\max},
\end{equation}
where $C>0$ is a constant that depends on the sub-gaussian parameter $\sigma$, $\lambda_{p'}$ is the $p'$-th largest eigenvalue of the true covariance matrix $\bs \Sigma$, and $\gm_{\max}:=4\sigma\left(\sqrt{p}+\sqrt{\frac{1}{2}\log \left(\frac{1}{\delta}\right)}\right)$.
\end{lemma}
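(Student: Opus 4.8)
The plan is to decompose the projection error into two pieces: the perturbation of the estimated eigen-subspace relative to the true one, and the magnitude of the realization $\tilde{\bm\gamma}$ being projected. Writing the true and estimated projection coordinates as $\tilde{\bs\rho}=\bmath U^\top\tilde{\bm\gamma}$ and $\hat{\bs\rho}=\hat{\bmath U}^\top\tilde{\bm\gamma}$, these coordinates are only defined up to the choice of orthonormal basis within each $p'$-dimensional subspace. To compare them I would first align the two bases using the orthogonal matrix $\bmath O$ furnished by the Davis--Kahan theorem, replacing $\bmath U$ by $\bmath U\bmath O$ (this is precisely the ``some basis coordinate system'' in the statement). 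Then
\[
\lVert\tilde{\bs\rho}-\hat{\bs\rho}\rVert_2=\lVert(\bmath U\bmath O-\hat{\bmath U})^\top\tilde{\bm\gamma}\rVert_2\leq\lVert\bmath U\bmath O-\hat{\bmath U}\rVert_2\,\lVert\tilde{\bm\gamma}\rVert_2\leq\lVert\bmath U\bmath O-\hat{\bmath U}\rVert_{\fr}\,\lVert\tilde{\bm\gamma}\rVert_2,
\]
which reduces the task to bounding the subspace distance and the length separately.

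For the subspace distance, I would invoke Theorem~\ref{thm:Davis-Kahan} with $s=1$ and $r=p'$, so that $d=p'$ and $\lambda_{s-1}=\lambda_0=\infty$; the minimum in the denominator then collapses to the eigen-gap $\lambda_{p'}-\lambda_{p'+1}$, yielding $\lVert\bmath U\bmath O-\hat{\bmath U}\rVert_{\fr}\leq 2^{3/2}(p')^{1/2}\lVert\hat{\bm\Sigma}-\bm\Sigma\rVert_2/(\lambda_{p'}-\lambda_{p'+1})$. The residual operator norm $\lVert\hat{\bm\Sigma}-\bm\Sigma\rVert_2$ is in turn controlled by the sub-gaussian covariance-estimation bound in Lemma~\ref{lem:CovarianceMatrixSubGaussian}: choosing the deviation parameter $t$ so that the failure probability $2\exp(-t^2p)$ equals $\delta$, i.e.\ $t=\sqrt{\log(2/\delta)/p}$, and reading off the admissible accuracy $\epsilon$ from $n=C(t/\epsilon)^2p$ gives $\lVert\hat{\bm\Sigma}-\bm\Sigma\rVert_2\leq\sqrt{C\log(2/\delta)/n}$ with probability at least $1-\delta$. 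Substituting this bound is what produces the $\sqrt{p'/n\,\log(2/\delta)}$ factor in the claim.

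Finally, I would bound the length of the projected point via Lemma~\ref{lem:SubGaussianLength}: with probability at least $1-\delta$, $\lVert\tilde{\bm\gamma}\rVert\leq 4\sigma\sqrt{p}+2\sigma\sqrt{2\log(1/\delta)}$, which coincides exactly with $\gamma_{\max}=4\sigma(\sqrt{p}+\sqrt{\tfrac12\log(1/\delta)})$ after using $2\sigma\sqrt{2\log(1/\delta)}=4\sigma\sqrt{\tfrac12\log(1/\delta)}$. A union bound over the covariance-estimation event and the length event then yields the claim with probability at least $1-2\delta$, and the leading constant $2^{3/2}\sqrt{C}$ is absorbed into a single constant $C$. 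The main technical care is twofold: first, keeping the eigenvector-alignment rotation $\bmath O$ consistent so that the coordinate comparison is legitimate (the deeper point being that the Frobenius distance after optimal rotation is a bona fide measure of subspace proximity, which is exactly what Davis--Kahan certifies); and second, the bookkeeping in Lemma~\ref{lem:CovarianceMatrixSubGaussian}, whose hypothesis $t\geq1$ forces $\log(2/\delta)\geq p$, so one must either restrict to the small-$\delta$ regime of interest or re-parametrize the covariance tail bound accordingly.
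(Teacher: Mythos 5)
Your proposal is correct and follows essentially the same route as the paper's own proof: the identical decomposition $\lVert\tilde{\bm\rho}-\hat{\bm\rho}\rVert_2\leq\lVert \bmath U\bmath O-\hat{\bmath U}\rVert_{\fr}\,\lVert\tilde{\bm\gamma}\rVert_2$ after Davis--Kahan alignment with $s=1$, $r=p'$, the same invocation of the sub-gaussian covariance bound with $t=\sqrt{\log(2/\delta)/p}$ and $\epsilon=\sqrt{C'\log(2/\delta)/n}$, the same length bound $\lVert\tilde{\bm\gamma}\rVert\leq\gamma_{\max}$, and the same union bound yielding $1-2\delta$. Your closing observation that the hypothesis $t\geq 1$ of the covariance-estimation lemma is in tension with this choice of $t$ unless $\log(2/\delta)\geq p$ is a fair catch---the paper's proof makes the same parameter choice without checking it---but it is a shared bookkeeping caveat rather than a divergence in approach.
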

\begin{proof}
Based on the definition of $\tilde{\bm\rho}$ and $\hat{\bm \rho}$, we have
\begin{equation}
\label{eq:rho_bound}
\begin{aligned}
        \lVert \tilde{\bm\rho}-\hat{\bm\rho} \rVert_2 &= \lVert (\bmath U{\bmath O}-\hat{\bmath U})\tilde{\bm \gamma}\rVert_2\\
        &\leq \| \bmath U{\bmath O}-\hat{\bmath U}\|_2 \|\tilde{\bm \gamma}\|_2\\
        &\leq \| \bmath U{\bmath O}-\hat{\bmath U}\|_{\fr} \|\tilde{\bm \gamma}\|_2 \\
        &\stackrel{(i)}{\leq} \frac{2^{3/2}p'{}^{1/2}}{\lambda_{p'} - \lambda_{p'+1}} \lVert\hat{\bm \Sigma}-\bm \Sigma\rVert_{2} \|\tilde{\bm \gamma}\|_2 \\
        &\stackrel{(ii)}{\leq}  \frac{2^{3/2} p'{}^{1/2}}{\lambda_{p'} - \lambda_{p'+1}}  \sqrt{\frac{C'}{n} \log\left(\frac{2}{\delta}\right)} \|\tilde{\bm \gamma}\|_2  ,
\end{aligned}
\end{equation}
 with probability at least $1-\delta$. Here, ${\bmath O}$ is an orthogonal (change-of-basis) matrix and $C'>0$ is a constant that depends on the sub-gaussian parameter~$\sigma$. Inequality $(i)$ follows from the application of the Davis-Kahan Theorem whose statement is detailed in \Cref{thm:Davis-Kahan}. We obtain inequality $(ii)$ by noting that $\tilde{\bs \gamma}$ is a sub-gaussian random vector and, therefore,~\Cref{lem:CovarianceMatrixSubGaussian} applies to our setting. Putting~ $\epsilon=\sqrt{\frac{C'}{n} \log\left(\frac{2}{\delta}\right)}$ and $t=\sqrt{ \log\left(\frac{2}{\delta}\right)\frac{1}{p}} $ in~\Cref{lem:CovarianceMatrixSubGaussian}, we get that 
$\lVert\bs\Sigma-\hat{\bs\Sigma}\rVert_{2}\leq \sqrt{\frac{C'}{n} \log\left(\frac{2}{\delta}\right)}$ with probability at least $1-\delta$.

Next, we define $\gm_{\max}:=4\sigma\left(\sqrt{p}+\sqrt{\frac{1}{2}\log \left(\frac{1}{\delta}\right)}\right)$. From~\Cref{lem:SubGaussianLength}, we get $\|\tilde{\bs \gamma}\|\leq \gm_{\max}$ with probability at least $1-\delta$. Therefore, by applying union bound to \eqref{eq:rho_bound}, we obtain that with probability at least 
 $1-2\delta$,
\begin{equation*}
\lVert \bs\rho-\hat{\bs\rho} \rVert_2 \leq  \frac{C}{\lambda_{p'} - \lambda_{p'+1}} \sqrt{\frac{p'}{n} \log \left(\frac{2}{\delta}\right)} \gamma_{\max}.
\end{equation*}
Substituting $C=2^{3/2}\sqrt{C'}$, we obtain the desired result.
\end{proof}

\begin{proof}[Proof of \Cref{prop:generalizationbound-highdim}]
\noindent To make the dependence of the bandwidth $h$ explicit, we define the kernel function as 
\begin{equation*}
 k_h(x)=\frac{1}{Z} \exp\left(-\frac{x}{h}\right),
\end{equation*}
where $Z$ is the normalization constant. As before, for any generic $\bs \gamma \in \mb R^{p}$, we denote its projection in the low-dimensional space as $\bs\rho \in \mb R^{p'}$. Similarly, for each $\bs\gm^i \in \mb R^{p}$, its projection is expressed as $\bs\rho^i \in \mb R^{p'}$. Since the true subspace is not known, we estimate the projection matrix by $\hat{\bmath U}$ obtained using principal component analysis (PCA), and denote the estimated projections of $\bs\rho$ and $\bs \rho^i$  by $\hat{\bs\rho}$ and $ \hat{\bs\rho}^i$ respectively. 

Let $\eta_i=\|\bs\rho-\bs\rho^i\|$ and $\eta_i'=\|\hat{\bs\rho}-\hat{\bs\rho}^i\|$. Using the result from~\Cref{lem:rho_bound} and applying union bound, we get that with probability at least $1-2(n_1+1)\delta$, $\|\bs\rho-\hat{\bs\rho}\|\leq \tau$ and $\|\bs\rho^i-\hat{\bs\rho}^i\|\leq \tau$ for all $i\in 
\mc I_1$, where we set $\tau=\frac{C}{\lambda_{p'} - \lambda_{p'+1}} \sqrt{\frac{p'}{n_2} \log \left(\frac{2}{\delta}\right)} \gamma_{\max}$ and $\gm_{\max}=4\sigma\left(\sqrt{p}+\sqrt{\frac{1}{2}\log \left(\frac{1}{\delta}\right)}\right)$ in the bound obtained from \Cref{eq:lemma_rho_bound}. Therefore, by using reverse triangle and triangle inequalities, we get that $|\eta_i-\eta_i'|\leq \| (\bs\rho-\bs\rho^i)- (\hat{\bs\rho}-\hat{\bs\rho}^i)\| \leq \| \bs\rho-\hat{\bs\rho}\|+\| \bs\rho^i-\hat{\bs\rho}^i\|  \leq 2\tau$ with probability at least $1-2 (n_1+1) \delta$. 

We are now in a position to obtain a bound for $\lvert \EE_{\bs\rho}[\ell(\bs x,\xit)]-\hat{\EE}_{\hat{\bs\rho}}[\ell(\bs x,\xit)] \rvert$ for a fixed $\bs x \in \mc X$. From triangle inequality, we first note that 
\begin{equation}\label{eq:rho_generalization_bound}
\lvert \EE_{\bs\rho}[\ell(\bs x,\xit)]-\hat{\EE}_{\hat{\bs\rho}}[\ell(\bs x,\xit)] \rvert \leq    \lvert \EE_{\bs\rho}[\ell(\bs x,\xit)]-\hat{\EE}_{\bs\rho}[\ell(\bs x,\xit)] \rvert + \lvert \hat{\EE}_{\bs\rho}[\ell(\bs x,\xit)]-\hat{\EE}_{\hat{\bs\rho}}[\ell(\bs x,\xit)] \rvert.
\end{equation}
Next, we obtain high probability bounds for each term in the right hand side of the above expression. By applying \Cref{cor:generalization_bound}, we get that with probability at least $1-\delta$, the first term is upper-bounded as
\begin{equation*}
\lvert \EE_{\bs\rho}[\ell(\bs x,\xit)]-\hat{\EE}_{\hat{\bs\rho}}[\ell(\bs x,\xit)] \rvert \leq \sqrt{ \frac{ \mathbb{V}_{\bm\rho}[\ell(\bs x,\xit)] }{ n_1 h_{n_1}^{p'}g(\bs\rho)(1+o(1))}\log \left(\frac{1}{\delta} \right)}
\end{equation*}
Next, we consider the second term in \Cref{eq:rho_generalization_bound}, which gives us
\begin{equation}\label{eq:PCA_bound}
\begin{array}{rl}
\lvert \hat{\EE}_{\bs\rho}[\ell(\bs x,\xit)]-\hat{\EE}_{\hat{\bs\rho}}[\ell(\bs x,\xit)] \rvert =&\displaystyle \left\vert\sum_{i \in \mc I_1}\frac{  k_{h}(\eta_i)}{\sum_{j \in \mc I_1} k_{h}(\eta_j)} \ell(\bs x,\bs\xi^i) - \sum_{i \in \mc I_1}\frac{  k_{h}(\eta_i')}{\sum_{j \in \mc I_1} k_{h}(\eta_j')} \ell(\bs x,\bs\xi^i)\right\vert\\
\leq &\displaystyle \left( \max_i \left\vert \ell(\bs x,\bs\xi^i)\right\vert\right) \sum_{i \in \mc I_1}\left\vert\frac{ k_{h}(\eta_i)}{\sum_{j \in \mc I_1} k_{h}(\eta_j)} - \frac{ k_{h}(\eta_i')}{\sum_{j \in \mc I_1} k_{h}(\eta_j')} \right\vert\\
\leq &\displaystyle\sum_{i \in \mc I_1}\left\vert\frac{ k_{h}(\eta_i)}{\sum_{j \in \mc I_1} k_{h}(\eta_j)} - \frac{ k_{h}(\eta_i')}{\sum_{j \in \mc I_1} k_{h}(\eta_j')} \right\vert .
\end{array}
\end{equation}
Here, the last inequality follows from the Assumption \ref{as3} that $\ell(\bs x,\bs\xi)$ takes values between 0~and~1 for all $\bs x \in \mc X$ and $\bs \xi \in \Xi$. Next, we obtain a bound for each term within the summation below.
\begin{equation}\label{eq:bound_summand}
\begin{aligned}
&\displaystyle\left\vert\frac{ k_{h}(\eta_i)}{\sum_{j \in \mc I_1} k_{h}(\eta_j)} - \frac{ k_{h}(\eta_i')}{\sum_{j \in \mc I_1} k_{h}(\eta_j')} \right\vert 
\\
=&\max\bigg\{ \frac{ k_{h}(\eta_i)}{\sum_{j \in \mc I_1} k_{h}(\eta_j)}- \frac{ k_{h}(\eta_i')}{\sum_{j \in \mc I_1} k_{h}(\eta_j')},  \frac{ k_{h}(\eta_i')}{\sum_{j \in \mc I_1} k_{h}(\eta_j')}- \frac{ k_{h}(\eta_i)}{\sum_{j \in \mc I_1} k_{h}(\eta_j)} \bigg\} 
\\
\leq &\max\bigg\{ \frac{ k_{h}(\eta_i)}{\sum_{j \in \mc I_1} k_{h}(\eta_j)}- \frac{ k_{h}(\eta_i+2\tau)}{\sum_{j \in \mc I_1} k_{h}(\eta_j-2\tau)},  \frac{ k_{h}(\eta_i-2\tau)}{\sum_{j \in \mc I_1} k_{h}(\eta_j+2\tau)}- \frac{ k_{h}(\eta_i)}{\sum_{j \in \mc I_1} k_{h}(\eta_j)} \bigg\} 
\\ 
= &\displaystyle\max\bigg\{ \frac{\exp(-\eta_i/h)}{\sum_{j \in \mc I_1}\mathcal\exp(-\eta_j/h)}- \frac{\exp(-(\eta_i+2\tau)/h)}{\sum_{j \in \mc I_1}\exp(-(\eta_j-2\tau)/h)},
\\ 
& \quad \quad \quad \quad \quad \quad \quad  \quad \quad \quad \ \frac{\exp(-(\eta_i-2\tau)/h)}{\sum_{j \in \mc I_1} \exp(-(\eta_j+2\tau)/h)}- \frac{\exp(-\eta_i/h)}{\sum_{j \in \mc I_1}\mathcal\exp(-\eta_j/h)} \bigg\} \\ 
\stackrel{(i)}{=} &\frac{\exp(-\eta_i/h)}{\sum_{j \in \mc I_1}\mathcal\exp(-\eta_j/h)} \max\big\{1-\exp(-4\tau/h), \exp(4\tau/h)-1 \big\}
\\
\leq &\frac{\exp(-\eta_i/h)}{\sum_{j \in \mc I_1}\mathcal\exp(-\eta_j/h)} (\exp(4\tau/h)-1) .
\end{aligned}
\end{equation}
Here, 
we obtain equality $(i)$ by considering each of the two terms within the max operator in the previous expression separately. We first evaluate the first term
\begin{equation*}
\frac{\exp(-\eta_i/h)}{\sum_{j \in \mc I_1}\mathcal\exp(-\eta_j/h)}- \frac{\exp(-(\eta_i+2\tau)/h)}{\sum_{j \in \mc I_1}\exp(-(\eta_j-2\tau)/h)} = \frac{\exp(-\eta_i/h)}{\sum_{j \in \mc I_1}\mathcal\exp(-\eta_j/h)} (1-\exp(-4\tau/h)).
\end{equation*}
Next, we consider the second term
\begin{equation*}
\frac{\exp(-(\eta_i-2\tau)/h)}{\sum_{j \in \mc I_1} \exp(-(\eta_j+2\tau)/h)}- \frac{\exp(-\eta_i/h)}{\sum_{j \in \mc I_1}\mathcal\exp(-\eta_j/h)} =  \frac{\exp(-\eta_i/h)}{\sum_{j \in \mc I_1} \exp(-\eta_j/h)}  (\exp(4\tau/h)-1).
\end{equation*}
Combining the results from \eqref{eq:PCA_bound} and \eqref{eq:bound_summand}, we get 
\begin{equation*}
\begin{aligned}
\displaystyle \left\vert\sum_{i \in \mc I_1}\frac{ k_{h}(\eta_i)}{\sum_{j \in \mc I_1} k_{h}(\eta_j)} \ell(\bs x,\bs\xi^i) - \sum_{i \in \mc I_1}\frac{ k_{h}(\eta_i')}{\sum_{j \in \mc I_1} k_{h}(\eta_j')} \ell(\bs x,\bs\xi^i)\right\vert&\leq \exp(4\tau/h)-1 \\
&\stackrel{(i)}{\leq}\frac{4\tau}{h} + \bigg(\frac{4\tau}{h}\bigg)^2 \\ 
&\stackrel{(ii)}{\leq}\frac{8\tau}{h} .
\end{aligned}
\end{equation*}
Note that $h$ is scaled with $n_1$ such that $n_2^{-1/2} / h < 1$. Thus,  $4\tau/h < 1$ for sufficiently large $n_1$ and $n_2$, and inequalities $(i)$ and $(ii)$ then follow from the fact that $e^x \leq 1+ x +x^2$ and $x^2\leq x$ for $x \leq 1$. Therefore, we have that with probability at least $1-2(n_1+1)\delta-\delta \geq 1-5 n_1\delta $, for a fixed $\bs x \in \mc X$, we have
\begin{equation*}
\begin{aligned}
\lvert \EE_{\bs\rho}[\ell(\bs x,\xit)]-\hat{\EE}_{\hat{\bs\rho}}[\ell(\bs x,\xit)] \rvert
&  \leq \sqrt{ \frac{ \mathbb{V}_{\bm\rho}[\ell(\bs x,\xit)] }{ n_1 h_{n_1}^{p'}g(\bs\rho)(1+o(1))}\log \left(\frac{1}{\delta} \right)}+\frac{8\tau}{h}\\
&  = \sqrt{ \frac{ \mathbb{V}_{\bm\rho}[\ell(\bs x,\xit)] }{ n_1 h_{n_1}^{p'}g(\bs\rho)(1+o(1))}\log \left(\frac{1}{\delta} \right)}+\frac{8}{h} \frac{C}{\lambda_{p'} - \lambda_{p'+1}} \sqrt{\frac{p'}{n_2} \log \left(\frac{2}{\delta}\right)} \gamma_{\max}.
\end{aligned}
\end{equation*}
\noindent Therefore, by applying union bound, we get that with probability at least $1-5 n_1\delta$, 
\begin{equation*}
\begin{aligned}
    \lvert \EE_{\bs\rho}[\ell(\bs x,\xit)]-\hat{\EE}_{\hat{\bs\rho}}[\ell(\bs x,\xit)] \rvert
    &  \leq  \sqrt{ \frac{ \mathbb{V}_{\bm\rho}[\ell(\bs x,\xit)] }{ n_1 h_{n_1}^{p'}g(\bs\rho)(1+o(1))}\log \left(\frac{\lvert \mc X \rvert}{\delta} \right)}\\
    &\quad+\frac{8}{h} \frac{4\sigma C}{\lambda_{p'} - \lambda_{p'+1}} \sqrt{\frac{p'}{n_2} \log \left(\frac{2\lvert \mc X \rvert}{\delta}\right)} \left(\sqrt{p}+\sqrt{\frac{1}{2}\log \left(\frac{\lvert \mc X \rvert}{\delta}\right)}\right)\qquad\forall \bs x \in \mc X.
\end{aligned}
\end{equation*}
The result then follows by performing the change of variable $\delta\leftarrow 5n_1\delta$, and by noting that  $\EE_{\bs\gamma}[\ell(\bs x,\xit)]=\EE_{\bs\rho}[\ell(\bs x,\xit)]$ and $\mathbb{V}_{\bm\gm}[\ell(\bs x,\xit)] =\mathbb{V}_{\bm\rho}[\ell(\bs x,\xit)]$.
\end{proof}

\begin{proof}[Proof of \Cref{prop:generalizationbound-highdim2} {[With bounded $\bm\gamma$]}]
\noindent We consider the same setup as in the proof of \Cref{prop:generalizationbound-highdim}, where we define the kernel function as
\begin{equation*}
 k_h(x)=\frac{1}{Z} \exp\left(-\frac{x}{h}\right),
\end{equation*}
with $Z$ a normalization constant. As before, for any generic $\bs \gamma \in \mb R^{p}$, we denote its projection on the low-dimensional space as $\bs\rho \in \mb R^{p'}$. Similarly, for each $\bs\gm^i \in \mb R^{p}$, its projection is expressed as $\bs\rho^i \in \mb R^{p'}$. Since the true subspace is not known, we estimate the projection matrix by $\hat{\bmath U}$ obtained using PCA, and denote the estimated projections of $\bs\rho$ and $\bs \rho^i$  by $\hat{\bs\rho}$ and $ \hat{\bs\rho}^i$, respectively. 

Let $\eta_i=\|\bs\rho-\bs\rho^i\|$ and $\eta_i'=\|\hat{\bs\rho}-\hat{\bs\rho}^i\|$. Using the result from the proof of~\Cref{lem:rho_bound}, if $\Vert\tilde{\bm{\gamma}}\Vert \leq \gamma_{\max}$ almost surely, then we get that with probability at least $1-\delta$, $\|\bs\rho-\hat{\bs\rho}\|\leq \tau$ and $\|\bs\rho^i-\hat{\bs\rho}^i\|\leq \tau$ for all $i\in \mc I_1$, where we set $\tau=\frac{C}{\lambda_{p'} - \lambda_{p'+1}} \sqrt{\frac{p'}{n_2} \log \left(\frac{2}{\delta}\right)} \gamma_{\max}$ in the bound obtained from \Cref{eq:lemma_rho_bound}. 

The rest of this proof proceeds as that of \Cref{prop:generalizationbound-highdim}. In particular, from triangle inequality, we first note that 
\begin{equation*}
\lvert \EE_{\bs\rho}[\ell(\bs x,\xit)]-\hat{\EE}_{\hat{\bs\rho}}[\ell(\bs x,\xit)] \rvert \leq    \lvert \EE_{\bs\rho}[\ell(\bs x,\xit)]-\hat{\EE}_{\bs\rho}[\ell(\bs x,\xit)] \rvert + \lvert \hat{\EE}_{\bs\rho}[\ell(\bs x,\xit)]-\hat{\EE}_{\hat{\bs\rho}}[\ell(\bs x,\xit)] \rvert,
\end{equation*}
where the first term in the right hand side is upper-bounded as
\begin{equation*}
\lvert \EE_{\bs\rho}[\ell(\bs x,\xit)]-\hat{\EE}_{\hat{\bs\rho}}[\ell(\bs x,\xit)] \rvert \leq \sqrt{ \frac{ \mathbb{V}_{\bm\rho}[\ell(\bs x,\xit)] }{ n_1 h_{n_1}^{p'}g(\bs\rho)(1+o(1))}\log \left(\frac{1}{\delta} \right)}
\end{equation*}
with probability at least $1-\delta$. 
The second term gives us
\begin{equation*}
\lvert \hat{\EE}_{\bs\rho}[\ell(\bs x,\xit)]-\hat{\EE}_{\hat{\bs\rho}}[\ell(\bs x,\xit)] \rvert \leq \frac{8\tau}{h} 
\end{equation*}
Therefore, we have that with probability at least $1-\delta-\delta = 1-2\delta $, for a fixed $\bs x \in \mc X$, we have
\begin{equation*}
\begin{aligned}
\lvert \EE_{\bs\rho}[\ell(\bs x,\xit)]-\hat{\EE}_{\hat{\bs\rho}}[\ell(\bs x,\xit)] \rvert
&  \leq \sqrt{ \frac{ \mathbb{V}_{\bm\rho}[\ell(\bs x,\xit)] }{ n_1 h_{n_1}^{p'}g(\bs\rho)(1+o(1))}\log \left(\frac{1}{\delta} \right)}+\frac{8\tau}{h}\\
&  = \sqrt{ \frac{ \mathbb{V}_{\bm\rho}[\ell(\bs x,\xit)] }{ n_1 h_{n_1}^{p'}g(\bs\rho)(1+o(1))}\log \left(\frac{1}{\delta} \right)}+\frac{8}{h} \frac{C}{\lambda_{p'} - \lambda_{p'+1}} \sqrt{\frac{p'}{n_2} \log \left(\frac{2}{\delta}\right)}
\gamma_{\max}.
\end{aligned}
\end{equation*}
\noindent By applying union bound, we get that with probability at least $1-\delta$, 
\begin{equation*}
\begin{aligned}
\lvert \EE_{\bs\rho}[\ell(\bs x,\xit)]-\hat{\EE}_{\hat{\bs\rho}}[\ell(\bs x,\xit)] \rvert&\leq  \sqrt{ \frac{ \mathbb{V}_{\bm\rho}[\ell(\bs x,\xit)] }{ n_1 h_{n_1}^{p'}g(\bs\rho)(1+o(1))}\log \left(\frac{2\lvert \mc X \rvert}{\delta} \right)} \\
&\quad+ \frac{8}{h} \frac{C}{\lambda_{p'} - \lambda_{p'+1}} \sqrt{\frac{p'}{n_2} \log \left(\frac{4\lvert \mc X \rvert}{\delta}\right)} \gamma_{\max}\qquad  \forall\bs x \in \mc X.
\end{aligned}
\end{equation*}
The result then follows  by noting that  $\EE_{\bs\gamma}[\ell(\bs x,\xit)]=\EE_{\bs\rho}[\ell(\bs x,\xit)]$ and $\mathbb{V}_{\bm\gm}[\ell(\bs x,\xit)] =\mathbb{V}_{\bm\rho}[\ell(\bs x,\xit)]$.
\end{proof}

\section{Proof of Proposition \ref{prop:bound_sample_stddev}}
\label{app:bound_sample_stddev}
To prove Proposition \ref{prop:bound_sample_stddev}, we rely on the following lemma. 
\begin{lemma}\label{lem:diff_expect_emp}
For any fixed $\bm x\in\mathcal X$ and $t\in[0,1]$, we have
\begin{equation}
\label{eq:bound_sample_stddev_t}
\displaystyle\left|\sqrt{ \EErho[(\ell (\bm x,\xit)-t)^2]}-\sqrt{\EErhohat[(\ell (\bm x,\xit)-t)^2]}\right|\leq 
\sqrt{ \frac{\log\left(\frac{1}{\delta}\right)}{ n h_n^pg(\bm\gamma)(1+o(1))}},
\end{equation}
with probability at least $1-\delta$.
\end{lemma}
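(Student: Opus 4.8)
The plan is to treat the squared deviation $(\ell(\bm x,\xit)-t)^2$ as a new bounded loss function, apply the generalization bound of \Cref{cor:generalization_bound} to it, and then convert the resulting control on the difference of expectations into control on the difference of their square roots by an elementary inequality. Concretely, for the fixed $\bm x$ and $t$ I would set $L(\bm\xi):=(\ell(\bm x,\bm\xi)-t)^2$. Since assumption~\ref{as1} guarantees $\ell(\bm x,\bm\xi)\in[0,1]$ and $t\in[0,1]$, we have $\ell(\bm x,\bm\xi)-t\in[-1,1]$ and hence $L(\bm\xi)\in[0,1]$. Because $L$ is merely a polynomial transformation of $\ell$, the regularity conditions of \Cref{thm:MDP} hold for $L$ by exactly the continuity, boundedness, and differentiability arguments carried out in the proof of \Cref{thm:generalization_bound}. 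Consequently \Cref{cor:generalization_bound} applies verbatim to $L$, giving, with probability at least $1-\delta$,
\begin{equation*}
\left|\EErho[L(\xit)]-\EErhohat[L(\xit)]\right|\leq\sqrt{\frac{\VVrho[L(\xit)]}{nh_n^pg(\bm\gamma)(1+o(1))}\log\left(\frac{1}{\delta}\right)}.
\end{equation*}

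The decisive step is to exploit the boundedness of $L$ to eliminate the variance factor. Because $L(\xit)\in[0,1]$, we have $L(\xit)^2\leq L(\xit)$ pointwise, so $\VVrho[L(\xit)]\leq\EErho[L(\xit)^2]\leq\EErho[L(\xit)]$. Writing $a:=\EErho[L(\xit)]=\EErho[(\ell(\bm x,\xit)-t)^2]$ and $b:=\EErhohat[L(\xit)]=\EErhohat[(\ell(\bm x,\xit)-t)^2]$, and setting $c:=\sqrt{\log(1/\delta)/(nh_n^pg(\bm\gamma)(1+o(1)))}$ (which is precisely the target right-hand side of \eqref{eq:bound_sample_stddev_t}), the displayed bound becomes $|a-b|\leq\sqrt{a}\,c$.

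Finally I would pass to square roots using the identity $|\sqrt{a}-\sqrt{b}|=|a-b|/(\sqrt{a}+\sqrt{b})$, valid for $a,b\geq 0$ not both zero. Since $\sqrt{a}+\sqrt{b}\geq\sqrt{a}$, this yields $|\sqrt{a}-\sqrt{b}|\leq|a-b|/\sqrt{a}\leq c$, which is exactly the claimed inequality; the degenerate case $a=0$ is immediate, since then $|a-b|\leq\sqrt{a}\,c=0$ forces $b=0$ and both sides vanish. The only genuinely delicate point is the variance-to-expectation bound $\VVrho[L]\leq\EErho[L]$: without it one would be left with a fourth-root dependence on the sample size, and it is exactly the $\sqrt{a}$ factor it supplies that cancels the $\sqrt{a}$ in the denominator of the square-root conversion, recovering the correct $O(\sqrt{1/(nh_n^p)})$ rate.
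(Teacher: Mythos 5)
Your proof is correct and takes essentially the same approach as the paper: you apply the moderate-deviations generalization bound to the transformed loss $L(\bm\xi)=(\ell(\bm x,\bm\xi)-t)^2$ and exploit $L\in[0,1]$ to bound $\VVrho[L(\xit)]\leq\EErho[L(\xit)^2]\leq\EErho[L(\xit)]$, exactly as the paper does. The only difference is the final elementary step, where you replace the paper's two-case completing-the-square argument with the cleaner conjugate identity $\left|\sqrt a-\sqrt b\right|=|a-b|/(\sqrt a+\sqrt b)$ (handling $a=0$ separately); both routes yield the same conclusion.
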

\begin{proof}
By applying Theorem \ref{thm:generalization_bound} to the input function $(\ell (\bm x,\xit)-t)^2$, which also satisfies all conditions in the theorem, we obtain that with a probability at least $1-\delta$
\begin{equation*}
\begin{array}{rl}
\displaystyle\left|\EErho[(\ell (\bm x,\xit)-t)^2]-\EErhohat[(\ell (\bm x,\xit)-t)^2]\right|&\displaystyle\leq \sqrt{\frac{\VVrho[(\ell (\bm x,\xit)-t)^2]}{ n h_n^pg(\bm\gamma)(1+o(1))}\log\left(\frac{1}{\delta}\right)}\\
&\displaystyle\leq \sqrt{\frac{\EErho[(\ell (\bm x,\xit)-t)^2]}{ n h_n^pg(\bm\gamma)(1+o(1))}\log\left(\frac{1}{\delta}\right)}.
\end{array}
\end{equation*}
Here, the last inequality follows from 
\begin{equation*}
\begin{array}{rl}
\VVrho[(\ell (\xit)-t)^2]&= \EErho[(\ell (\xit)-t)^4]-\EErho[(\ell (\xit)-t)^2]^2\\
&\leq \EErho[(\ell (\xit)-t)^4]\\
&\leq \EErho[(\ell (\xit)-t)^2],
\end{array}
\end{equation*}
where the final inequality holds because the random variable $(\ell(\bm x,\xit)-t)^2$ is supported on a subset of $[0,1]$. Next, expanding the absolute value term yields the following two cases:
\begin{equation}
\label{eq:two_sided_sample_stddev_t}
\begin{array}{l}
\displaystyle \EErho[(\ell(\bm x,\xit)-t)^2]-\EErhohat[(\ell(\bm x,\xit)-t)^2]\leq \sqrt{\frac{\EErho[(\ell(\bm x,\xit)-t)^2]}{ n h_n^pg(\bm\gamma)(1+o(1))}\log\left(\frac{1}{\delta}\right)}\quad\textup{ and }\\
\displaystyle\EErhohat[(\ell(\bm x,\xit)-t)^2]-\EErho[(\ell(\bm x,\xit)-t)^2]\leq \sqrt{\frac{\EErho[(\ell(\bm x,\xit)-t)^2]}{ n h_n^pg(\bm\gamma)(1+o(1))}\log\left(\frac{1}{\delta}\right)}. 
\end{array}
\end{equation}
From the first case, we obtain
\begin{equation*}
\begin{array}{ll}
&\displaystyle \EErho[(\ell(\bm x,\xit)-t)^2]-\sqrt{\frac{\EErho[(\ell(\bm x,\xit)-t)^2]}{ n h_n^pg(\bm\gamma)(1+o(1))}\log\left(\frac{1}{\delta}\right)}\leq \EErhohat[(\ell(\bm x,\xit)-t)^2],\\
\end{array}
\end{equation*}
which is equivalent to
\begin{equation*}
\displaystyle\left(\sqrt{ \EErho[(\ell(\bm x,\xit)-t)^2]}-\frac{1}{2}\sqrt{\frac{\log\left(\frac{1}{\delta}\right)}{ n h_n^pg(\bm\gamma)(1+o(1))}}\right)^2\leq \frac{1}{4}\frac{\log\left(\frac{1}{\delta}\right)}{ n h_n^pg(\bm\gamma)(1+o(1))}+ \EErhohat[(\ell(\bm x,\xit)-t)^2].
\end{equation*}
Taking square root on both sides then yields
\begin{equation}
\label{eq:side_1_of_sample_stddev_t}
\begin{array}{ll}
\displaystyle\sqrt{ \EErho[(\ell(\bm x,\xit)-t)^2]}-\frac{1}{2}\sqrt{\frac{\log\left(\frac{1}{\delta}\right)}{ n h_n^pg(\bm\gamma) (1+o(1))}}&\displaystyle\leq \sqrt{ \frac{1}{4}\frac{\log\left(\frac{1}{\delta}\right)}{ n h_n^pg(\bm\gamma)(1+o(1))}+ \EErhohat[(\ell(\bm x,\xit)-t)^2]}\\
&\displaystyle\leq \frac{1}{2}\sqrt{ \frac{\log\left(\frac{1}{\delta}\right)}{ n h_n^pg(\bm\gamma)(1+o(1))}}+ \sqrt{\EErhohat[(\ell(\bm x,\xit)-t)^2]},
\end{array}
\end{equation}
where the last inequality follows from the relation $\sqrt{a_1+a_2}\leq \sqrt{a_1}+\sqrt{a_2}$.  Next, the second case in~\eqref{eq:two_sided_sample_stddev_t} yields
\begin{equation*}
\begin{array}{rl}
\displaystyle \EErhohat[(\ell(\bm x,\xit)-t)^2]&\leq\displaystyle \EErho[(\ell(\bm x,\xit)-t)^2]+\sqrt{\frac{\EErho[(\ell(\bm x,\xit)-t)^2]}{ n h_n^pg(\bm\gamma)(1+o(1)))}\log\left(\frac{1}{\delta}\right)}\\
&\displaystyle=\left(\sqrt{ \EErho[(\ell(\bm x,\xit)-t)^2]}+\frac{1}{2}\sqrt{\frac{\log\left(\frac{1}{\delta}\right)}{ n h_n^pg(\bm\gamma)(1+o(1))}}\right)^2-  \frac{1}{4}\frac{\log\left(\frac{1}{\delta}\right)}{ n h_n^pg(\bm\gamma)(1+o(1))}\\
&\displaystyle\leq\left(\sqrt{ \EErho[(\ell(\bm x,\xit)-t)^2]}+\frac{1}{2}\sqrt{\frac{\log\left(\frac{1}{\delta}\right)}{ n h_n^pg(\bm\gamma)(1+o(1))}}\right)^2. \\
\end{array}
\end{equation*}
Finally, taking square root on both sides and combining with the inequality in \eqref{eq:side_1_of_sample_stddev_t}, we   conclude that  the bound in \eqref{eq:bound_sample_stddev_t} indeed holds. This completes the proof. 
\end{proof}
\noindent Using this lemma, we prove the bound of the error introduced by the empirical conditional standard deviation.

\begin{proof}[Proof of Proposition \ref{prop:bound_sample_stddev}]
We first show that the function $\sqrt{ \EErho[(\ell(\bm x,\xit)-t)^2]}$ is Lipschitz continuous in $t$ with  constant~$1$. Indeed, by the reverse triangle inequality, we have
\begin{equation}
\begin{array}{rl}
\displaystyle \left|\sqrt{ \EErho[(\ell(\bm x,\xit)-t)^2]}-\sqrt{ \EErho[(\ell(\bm x,\xit)-t')^2]}\right|&\leq\displaystyle\sqrt{ \EErho[(\ell(\bm x,\xit)-t-\ell(\bm x,\xit)+t')^2]}=|t-t'|,
\end{array}
\end{equation}
where the inequality holds because the function $\sqrt{ \EErho[(\cdot)^2]}$ constitutes a semi-norm. One can similarly show that the function $\sqrt{ \EErhohat[(\ell(\bm x,\xit)-t)^2]}$ is Lipschitz continuous in $t$ with constant $1$. We next observe that 
\begin{equation*}\label{eq:optimization_for_std_dev}
\sqrt{\VVrho[\ell(\bm x,\xit)]}=\min_{t\in[0,1]}\sqrt{ \EErho[(\ell(\bm x,\xit)-t)^2]}\quad\textup{ and }\quad\sqrt{\VVrhohat[\ell(\bm x,\xit)]}=\min_{t\in[0,1]}\sqrt{ \EErhohat[(\ell(\bm x,\xit)-t)^2]}, 
\end{equation*} 
which follows from the fact that the minimizers of these scalar optimization problems are respectively given by the mean $\EErho[(\ell(\bm x,\xit)]$ and the empirical mean $\EErhohat[(\ell(\bm x,\xit)]$. Consider now a finite subset $\mathcal T=\{0,\tau /2,\tau, 3\tau /2 , \dots ,1\}$ of $[0,1]$ with cardinality $|\mathcal T|=1+1/(\tau/2) = 1 + 2/\tau$. Let $t^\star$ and $\hat t^\star$, respectively, be the minimizers of the above optimization problems over the subset~$\mathcal T$ instead of $[0,1]$. By the Lipschitz continuity of the objective functions, we can guarantee that 
\begin{equation*}
\left|\sqrt{\VVrho[\ell(\bm x,\xit)]}-\sqrt{ \EErho[(\ell(\bm x,\xit)-t^\star)^2]}\right|\leq \tau /2
\;\;\textup{and}\;\;
\left|\sqrt{\VVrhohat[\ell(\bm x,\xit)]}-\sqrt{ \EErhohat[(\ell(\bm x,\xit)-\hat t^\star)^2]}\right|\leq \tau /2. 
\end{equation*}
Thus, to ensure that the bound $\left|\sqrt{\VVrho[\ell(\bm x,\xit)]}-\sqrt{\VVrhohat[\ell(\bm x,\xit)]}\right|\leq\epsilon$ holds, we require the sufficient condition 
\begin{equation*}
\left|\sqrt{ \EErho[(\ell(\bm x,\xit)-t^\star)^2]}-\sqrt{ \EErhohat[(\ell(\bm x,\xit)-\hat t^\star)^2]}\right|\leq \epsilon-\tau. 
\end{equation*}
Note that the left-hand side expression is upper bounded by the largest error
\begin{equation*}
\begin{array}{rl}
\displaystyle\max_{t\in\mathcal T}\left|\sqrt{ \EErho[(\ell(\bm x,\xit)- t)^2]}-\sqrt{ \EErhohat[(\ell(\bm x,\xit)- t)^2]}\right|. 
\end{array}
\end{equation*}
Thus, applying the union bound to \eqref{eq:bound_sample_stddev_t} over $t\in\mathcal T$ yields an upper bound on left-hand side expression, as follows
\begin{equation*}
\displaystyle\left|\sqrt{ \EErho[(\ell(\bm x,\xit)-t^\star)^2]}-\sqrt{ \EErhohat[(\ell(\bm x,\xit)-\hat t^\star)^2]}\right|\leq 
\sqrt{ \frac{\log\left(\frac{|\mathcal T|}{\delta}\right)}{ n h_n^pg(\bm\gamma)(1+o(1))}}.
\end{equation*}
The result then follows by equating the right hand side with $\epsilon-\tau$. 
\end{proof}

\section{Proof of \Cref{thm:optimality_bound}}
\label{sec:thm_optimality_bound}
Using the result in Proposition \ref{prop:bound_sample_stddev}, we first obtain a new generalization bound in view of the empirical conditional standard deviation. 
\begin{lemma}
Fix a tolerance level $\tau>0$. Then, for any $\bm x\in\mathcal X$, we have
\begin{equation}
\label{eq:error_bound_with_sample_variance}
\left|\EErho[\ell(\bm x,\xit)]-\EErhohat[\ell(\bm x,\xit)]\right|\leq\left(\sqrt{\VVrhohat[\ell(\bm x,\xit)]}+\tau\right) \sqrt{\frac{\log\left(\frac{2}{\delta}\right)}{ n h_n^pg(\bm\gamma)(1+o(1))}}
+{ \frac{\sqrt{\log\left(\frac{2(1+2/\tau)}{\delta}\right)\log\left(\frac{2}{\delta}\right)}}{ n h_n^pg(\bm\gamma)(1+o(1))}},
\end{equation}
with probability at least  $1-\delta$. 
\end{lemma}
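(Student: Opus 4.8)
The plan is to combine the two generalization bounds already established---\Cref{cor:generalization_bound}, which controls the deviation $\left|\EErho[\ell(\bm x,\xit)]-\EErhohat[\ell(\bm x,\xit)]\right|$ in terms of the \emph{true} conditional standard deviation $\sqrt{\VVrho[\ell(\bm x,\xit)]}$, and \Cref{prop:bound_sample_stddev}, which bounds the gap between the true and the \emph{empirical} conditional standard deviations---via a union bound, and then to substitute the latter estimate into the former.

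First I would apply \Cref{cor:generalization_bound} at confidence level $1-\delta/2$ to obtain, for the fixed $\bm x$,
\begin{equation*}
\left|\EErho[\ell(\bm x,\xit)]-\EErhohat[\ell(\bm x,\xit)]\right|\leq \sqrt{\VVrho[\ell(\bm x,\xit)]}\,\sqrt{\frac{\log\left(\frac{2}{\delta}\right)}{n h_n^p g(\bm\gamma)(1+o(1))}}
\end{equation*}
with probability at least $1-\delta/2$. Separately, I would invoke \Cref{prop:bound_sample_stddev} with the same tolerance $\tau$ and confidence $1-\delta/2$, which gives
\begin{equation*}
\sqrt{\VVrho[\ell(\bm x,\xit)]}\leq \sqrt{\VVrhohat[\ell(\bm x,\xit)]}+\tau+\sqrt{\frac{\log\left(\frac{2(1+2/\tau)}{\delta}\right)}{n h_n^p g(\bm\gamma)(1+o(1))}}
\end{equation*}
with probability at least $1-\delta/2$.

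By the union bound both inequalities hold simultaneously with probability at least $1-\delta$. On this event I would substitute the upper bound on $\sqrt{\VVrho[\ell(\bm x,\xit)]}$ into the first display and expand the resulting product. The term $\left(\sqrt{\VVrhohat[\ell(\bm x,\xit)]}+\tau\right)\sqrt{\log(2/\delta)/(n h_n^p g(\bm\gamma)(1+o(1)))}$ reproduces the first summand on the right-hand side of \eqref{eq:error_bound_with_sample_variance} verbatim, while the cross term is the product of two square-root factors, which collapses (using $\sqrt{a}\sqrt{b}=\sqrt{ab}$ and absorbing the two $(1+o(1))$ factors into a single one) into
\begin{equation*}
\frac{\sqrt{\log\left(\frac{2(1+2/\tau)}{\delta}\right)\log\left(\frac{2}{\delta}\right)}}{n h_n^p g(\bm\gamma)(1+o(1))},
\end{equation*}
which is exactly the second summand.

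The argument is essentially bookkeeping, so I do not anticipate a deep obstacle; the only points requiring care are the equal allocation of the failure budget between the two invoked results (which is what produces the $\log(2/\delta)$ and $\log(2(1+2/\tau)/\delta)$ factors rather than $\log(1/\delta)$ and $\log((1+2/\tau)/\delta)$), and the verification that multiplying the two $\bigl(n h_n^p g(\bm\gamma)(1+o(1))\bigr)^{-1/2}$ factors yields a single $\bigl(n h_n^p g(\bm\gamma)(1+o(1))\bigr)^{-1}$ with one consolidated $(1+o(1))$ term, since the two $o(1)$ contributions originate from different asymptotic statements but are both $o(1)$ and hence combine without loss.
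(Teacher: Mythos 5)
Your proposal is correct and matches the paper's own proof essentially verbatim: the paper likewise combines \Cref{cor:generalization_bound} with \Cref{prop:bound_sample_stddev} (adding and subtracting $\sqrt{\VVrhohat[\ell(\bm x,\xit)]}$ inside the variance factor) and concludes via a union bound, with the $\log(2/\delta)$ and $\log(2(1+2/\tau)/\delta)$ factors arising from exactly the reparametrization $\delta\leftarrow\delta/2$ you describe. Your handling of the cross term and the consolidation of the two $(1+o(1))$ factors is also the same bookkeeping the paper performs implicitly.
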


\begin{proof}
The bounds in \eqref{eq:error_bound} and~\eqref{eq:bound_sample_stddev} yield
\begin{equation*}
\begin{array}{rl}
\displaystyle\left|\EErho[\ell(\bm x,\xit)]-\EErhohat[\ell(\bm x,\xit)]\right|&\displaystyle\leq \left(\sqrt{\VVrho[\ell(\bm x,\xit)]}-\sqrt{\VVrhohat[\ell(\bm x,\xit)]}+\sqrt{\VVrhohat[\ell(\bm x,\xit)]}\right)\sqrt{\frac{\log\left(\frac{1}{\delta}\right)}{ n h_n^pg(\bm\gamma)(1+o(1))}}\\
&\displaystyle\leq \left(\tau
		+\sqrt{ \frac{\log\left(\frac{1+2/\tau}{\delta}\right)}{ n h_n^pg(\bm\gamma)(1+o(1))}}+\sqrt{\VVrhohat[\ell(\bm x,\xit)]}\right)\sqrt{\frac{\log\left(\frac{1}{\delta}\right)}{ n h_n^pg(\bm\gamma)(1+o(1))}}.
\end{array}
\end{equation*}
The above inequality holds with probability at least $1-2\delta$, which completes the proof. 
\end{proof}
\noindent The above lemma shows that the errors introduced by replacing the conditional variance term with its empirical estimates diminish at the faster rate of $ O({1}/( n h_n^p))$, and become negligible when the sample size is large. 

\begin{proof}[Proof of \Cref{thm:optimality_bound}]
\label{sec:thm:optimality_bound}
Applying the union bound to \eqref{eq:error_bound_with_sample_variance} over $\bm x\in\mathcal X$, we find that with probability at least $1-\delta$, 
\begin{multline*}
\displaystyle\left|\EErho[\ell(\bm x,\xit)]-\EErhohat[\ell(\bm x,\xit)]\right|\leq\left(\sqrt{\VVrhohat[\ell(\bm x,\xit)]}+\tau\right) \displaystyle\sqrt{\frac{\log\left(\frac{2|\mathcal X|}{\delta}\right)}{ n h_n^pg(\bm\gamma)(1+o(1))}}\displaystyle+{ \frac{\log\left(\frac{2|\mathcal X|(1+2/\tau)}{\delta}\right)}{ n h_n^p g(\bm\gamma)(1+o(1))}}, \\ \quad\forall\bm x\in\mathcal X.
\end{multline*}
Thus, for $\bm x = \hat{\bm  x}$ we get
\begin{equation*}
\begin{array}{rl}
\displaystyle\EErho[\ell(\hat{\bm  x},\xit)]&\displaystyle\leq\;\EErhohat[\ell(\hat{\bm  x},\xit)]+\left(\sqrt{\VVrhohat[\ell(\hat{\bm  x},\xit)]}+\tau\right) \sqrt{\frac{\log\left(\frac{2|\mathcal X|}{\delta}\right)}{ n h_n^pg(\bm\gamma)(1+o(1))}}
+{ \frac{\log\left(\frac{2|\mathcal X|(1+2/\tau)}{\delta}\right)}{ n h_n^pg(\bm\gamma)(1+o(1))}} \\
&\displaystyle\leq\;\EErhohat[\ell({\bm  x}^\star,\xit)]+\left(\sqrt{\VVrhohat[\ell({\bm  x}^\star,\xit)]}+\tau\right) \sqrt{\frac{\log\left(\frac{2|\mathcal X|}{\delta}\right)}{ n h_n^pg(\bm\gamma)(1+o(1))}}
+{ \frac{\log\left(\frac{2|\mathcal X|(1+2/\tau)}{\delta}\right)}{ n h_n^pg(\bm\gamma)(1+o(1))}},
\end{array}
\end{equation*}
where the second inequality holds because $\bm x^\star$ is suboptimal for the regularized problem \eqref{eq:regularized_problem}. 
Next, applying the bound~\eqref{eq:error_bound} for $\EErhohat[\ell({\bm  x}^\star,\xit)]$ and the bound \eqref{eq:bound_sample_stddev} for $\sqrt{\VVrhohat[\ell({\bm  x}^\star,\xit)]}$, we obtain 
\begin{equation*}
\begin{array}{rl}
\displaystyle\EErho[\ell(\hat{\bm  x},\xit)]
&\displaystyle\leq\;\EErho[\ell({\bm  x}^\star,\xit)]+\sqrt{\frac{\VVrho[\ell(\bm x^\star,\xit)]}{ n h_n^pg(\bm\gamma)(1+o(1))}\log\left(\frac{3}{\delta}\right)}\\
&\qquad\displaystyle+\left(\sqrt{\VVrho[\ell({\bm  x}^\star,\xit)]} + 2\tau
+\sqrt{ \frac{\log\left(\frac{3+6/\tau}{\delta}\right)}{ n h_n^pg(\bm\gamma)(1+o(1))}}\right) \sqrt{\frac{\log\left(\frac{6|\mathcal X|}{\delta}\right)}{ n h_n^pg(\bm\gamma)(1+o(1))}}\\
& \displaystyle 
\qquad+{ \frac{\log\left(\frac{6|\mathcal X|(1+2/\tau)}{\delta}\right)}{ n h_n^pg(\bm\gamma)(1+o(1))}}.
\end{array}
\end{equation*}
Finally, after performing further algebraic simplifications, we arrive at the desired bound. This completes the proof. 
\end{proof}



\section{Proof of \Cref{cor:continuousX}}
\label{sec:thm_optimality_bound_continuosX}
\begin{proof}
Recall that $\x^\star\in \mc{\X}$ minimize the true conditional expectation $\EErho[\ell(\x,\xit)]$ over all $\x \in \X$. Next, consider a fixed parameter $\eta >0$. As before, similar to the proof of \Cref{thm:generalization_bound_continuousX}, we define a finite set of points $\mathcal{X}_\eta \subset \mathcal{X}$ such that $\bm x^\star \in \X_\eta$ and for any $\bm{x}\in\mathcal{X}$, there exists $\bm{x}'\in\mathcal{X}_\eta$ such that
$\Vert \bm{x} - \bm{x}' \Vert \leq \eta.$ From \cite{SN2005}, we know that the cardinality for the set $\vert \mathcal{X}_{\eta} \vert = \mathcal{O}(1) (D/\eta)^d$. 

Let $\hat{\x}\in \X$ and $\hat{\x}'\in \X_\eta$ denote the minimizers of $\hat{\mb E}_\gamma[\ell(\bm{x},\xit)]+\lambda \sqrt{\VVrhohat[\ell(\x,\xit)]}$ over $\X$ and $\X_\eta$ respectively. Next, consider a solution $\hat{\x}'' \in \X_\eta$ such that  $\lVert \hat{\x}-\hat{\x}''\rVert \leq \eta$. Using the result obained in \Cref{lem:lipschitz_loss}, by Lipschitz continuity of $\hat{\mb E}_\gamma[\ell(\hat{\x}'',\xit)]+\lambda \sqrt{\VVrhohat[\ell(\hat{\x}'',\xit)]}$, we have 
\begin{equation}\label{eq:lipschitz-ineq} 
\begin{aligned}
 &\hat{\mb E}_\gamma[\ell(\hat{\x}'',\xit)]+\lambda \sqrt{\VVrhohat[\ell(\hat{\x}'',\xit)]}
 &&\leq \hat{\mb E}_\gamma[\ell(\hat{\x},\xit)]+\lambda \sqrt{\VVrhohat[\ell(\hat{\x},\xit)]} + M'\lVert \hat{\x}'' - \hat{\x}\rVert \\
 &&&\leq \hat{\mb E}_\gamma[\ell(\hat{\x}',\xit)]+\lambda \sqrt{\VVrhohat[\ell(\hat{\x}',\xit)]} + M'\eta \\
 &&&\leq \hat{\mb E}_\gamma[\ell(\x^*,\xit)]+\lambda \sqrt{\VVrhohat[\ell(\x^*,\xit)]} + M'\eta ,
\end{aligned}
\end{equation}
where $M' = (1+\lambda)M$. Furthermore, since $\mathcal{X}_\eta$ is finite, we can apply the same approach as used in the proof of Theorem~\ref{thm:optimality_bound} to obtain  the following result:
\begin{multline*}
\displaystyle\left|\EErho[\ell(\bm x,\xit)]-\EErhohat[\ell(\bm x,\xit)]\right|\leq\left(\sqrt{\VVrhohat[\ell(\bm x,\xit)]}+\tau\right) \displaystyle\sqrt{\frac{\log\left(\frac{2|\mathcal X_\eta|}{\delta}\right)}{ n h_n^pg(\bm\gamma)(1+o(1))}}\displaystyle+{ \frac{\log\left(\frac{2|\mathcal X_\eta|(1+2/\tau)}{\delta}\right)}{ n h_n^p g(\bm\gamma)(1+o(1))}}, \\ \quad\forall\bm x\in \mathcal{X}_\eta
\end{multline*}
with probability at least $1-\delta$.
Substituting $\bm x = \hat{\bm  x}''$ and using the same approach as discussed in the proof of Theorem \ref{thm:optimality_bound}, we obtain 
\begin{equation*}
\begin{array}{rl}
\displaystyle\EErho[\ell(\hat{\bm  x}'',\xit)]&\displaystyle\leq\;\EErhohat[\ell(\hat{\bm  x}'',\xit)]+\left(\sqrt{\VVrhohat[\ell(\hat{\bm  x}'',\xit)]}+\tau\right) \displaystyle\sqrt{\frac{\log\left(\frac{2|\mathcal X_\eta|}{\delta}\right)}{ n h_n^pg(\bm\gamma)(1+o(1))}}\displaystyle+{ \frac{\log\left(\frac{2|\mathcal X_\eta|(1+2/\tau)}{\delta}\right)}{ n h_n^p g(\bm\gamma)(1+o(1))}} \\
&\displaystyle\leq\;\EErhohat[\ell({\bm  x}^\star,\xit)]+M'\eta+\left(\sqrt{\VVrhohat[\ell({\bm  x}^\star,\xit)]}+\tau\right) \displaystyle\sqrt{\frac{\log\left(\frac{2|\mathcal X_\eta|}{\delta}\right)}{ n h_n^pg(\bm\gamma)(1+o(1))}}\displaystyle+{ \frac{\log\left(\frac{2|\mathcal X_\eta|(1+2/\tau)}{\delta}\right)}{ n h_n^p g(\bm\gamma)(1+o(1))}},
\end{array}
\end{equation*}
where the second inequality follows from~\eqref{eq:lipschitz-ineq}. Following the steps in Theorem~\ref{thm:optimality_bound}, we obtain
\begin{equation*}
\displaystyle\EErho[\ell(\hat{\bm  x}'',\xit)]
\displaystyle\leq\displaystyle\EErho[\ell({\bm  x}^\star,\xit)]+M'\eta+\left(\sqrt{\VVrho[\ell({\bm  x}^\star,\xit)]}+\tau\right) \sqrt{\frac{4\log\left(\frac{6|\mathcal X_\eta|}{\delta}\right)}{ n h_n^pg(\bm\gamma) (1+o(1))}}+{ \frac{2\log\left(\frac{6|\mathcal X_\eta|(1+2/\tau)}{\delta}\right)}{ n h_n^pg(\bm\gamma)(1+o(1))}},
\end{equation*}
with probability at least $1-\delta$. Furthermore, from Lipschitz continuity of $\ell$,  we get that 
\begin{equation*}
\EErho[\ell(\hat{\bm  x},\xit)]  \leq \EErho[\ell(\hat{\x}'',\xit)] + M\eta.
\end{equation*}
This gives us the final bound below
\begin{equation*}
\displaystyle\EErho[\ell(\hat{\bm  x},\xit)]
\displaystyle\leq\displaystyle\EErho[\ell({\bm  x}^\star,\xit)]+(M+M')\eta+\left(\sqrt{\VVrho[\ell({\bm  x}^\star,\xit)]}+\tau\right) \sqrt{\frac{4\log\left(\frac{6|\mathcal X_\eta|}{\delta}\right)}{ n h_n^pg(\bm\gamma) (1+o(1))}}+{ \frac{2\log\left(\frac{6|\mathcal X_\eta|(1+2/\tau)}{\delta}\right)}{ n h_n^pg(\bm\gamma)(1+o(1))}},
\end{equation*}
\end{proof}

\section{Proof of Proposition~\ref{prop:DRO}}
\begin{proof}
The proof of this proposition follows and generalizes the approach discussed in  \cite{duchi2019variance}. To simplify the notation, we define a random variable $\tilde{z} = \ell(\bm x,\xit)$ and a vector $\bm{z} \in \mathbb{R}^n$ where $z_i = \ell(\bm x,\bm{\xi}^i)$. We denote
\begin{equation*}
\overline{z} =  \hat{\mathbb{E}}_\gamma [ \ell(\bm x,\xit) ] = \sum_{i=1}^n \overline{w}_i \cdot \ell(\bm x,\bm{\xi}^i) 
\quad\text{and}\quad 
s = \hat{\mathbb{V}}_\gamma [\ell(\bm x,\xit) ]  = \sum_{i=1}^n \overline{w}_i \left(\ell(\bm x,\bm{\xi}^i) -\overline{z}\right)^2.
\end{equation*}
The DRO problem $\max_{\mb P\in\mathcal P_\lambda(\hat{\mb P}_{\bm \gamma})}  \mb E_{\mb P} [\ell(\bm x,\xit)]$ can be equivalently written as
\begin{equation*}
\max_{\bm{w}} \left\{ \bm{w}^\top\bm{z} : \sum_{i=1}^n  \frac{1}{2\overline{w}_i} \left(w_i- \overline{w}_i \right)^2 \leq \rho , \; \bm{w}^\top\mathbf{e} = 1, \; \bm{w}\in\mathbb{R}^n_+ \right\},
\end{equation*}
where $\rho = \frac{\lambda^2}{2}$. By change of variable $\bm{u} = \bm{w} -\overline{\bm{w}}$, the above problem is equivalent to
\begin{equation*}
\max_{\bm{u}} \left\{ \overline{z} + \bm{u}^\top(\bm{z} - \overline{z}\cdot \mathbf{e}) : \Vert \bm{u}\Vert_Q^2 \leq \rho , \; \bm{u}^\top\mathbf{e} = 0, \; \bm{u} + \overline{\bm{w}} \geq \mathbf{0} \right\},
\end{equation*}
where $\Vert \bm{u}\Vert_W := \sqrt{ \sum_{i=1}^n  \frac{1}{2\overline{w}_i} \left(u_i \right)^2}$ is defined to be a weighted norm. We further define its dual norm $\Vert \bm{u}\Vert_{W^{-1}}^2 := \sqrt{ \sum_{i=1}^n  2\overline{w}_i \left(u_i \right)^2}$, and the upper bound of the above optimization problem is
\begin{equation*}
\overline{z} + \bm{u}^\top(\bm{z} - \overline{z}\cdot \mathbf{e}) \leq  \overline{z} + \Vert \bm{u}\Vert_W \; \Vert \bm{z} - \overline{z}\cdot \mathbf{e} \Vert_{W^{-1}} \leq \overline{z} + \sqrt{\rho}\; \Vert \bm{z} - \overline{z}\cdot \mathbf{e} \Vert_{W^{-1}} = \overline{z} + \sqrt{2\rho s},
\end{equation*}
where the last equality holds because
\begin{equation*}
\Vert \bm{z} - \overline{z}\cdot \mathbf{e} \Vert_{W^{-1}} = \sqrt{\sum_{i=1}^n 2\overline{w}_i (z_i-\overline{z})^2}  = \sqrt{2\hat{\mathbb{V}}_\gamma [\tilde{z}]}.
\end{equation*}
The above upper bound can be achieved by selecting
\begin{equation*}
u_i = \frac{\sqrt{2\rho} \overline{w}_i (z_i - \overline{z})}{\sqrt{s}}.
\end{equation*}
The above choice of $\bm{u}$ satisfies the constraints $\Vert \bm{u}\Vert_W^2 \leq \rho$ and $\bm{u}^\top\mathbf{e} = 0$.
Therefore, such $\bm{u}$ is feasible as long as 
\begin{equation*}
u_i = \frac{\sqrt{2\rho} \overline{w}_i (z_i - \overline{z})}{\sqrt{s}} \geq - \overline{w}_i  \;\Longleftrightarrow\;  \frac{\sqrt{2\rho} (z_i - \overline{z})}{\sqrt{s}} \geq - 1 .
\end{equation*}
Since $\vert z_i - \overline{z} \vert = \left\vert \ell(\bm x,\bm{\xi}^i) -  \sum_{i=1}^n \overline{w}_i \cdot \ell(\bm x,\bm{\xi}^i)  \right\vert \leq 1$, then a sufficient condition of the above is 
\begin{equation*}
\frac{ 2\rho 1^2}{s} \leq 1 \;\;\Longleftrightarrow\;\; s\geq 2\rho  \;\;\Longleftrightarrow\;\; \sqrt{2\rho s} \geq 2\rho.
\end{equation*}
Thus, if $s - 2\rho  \geq 0$, $\bm{u}$ is a feasible solution. On the other hand, $\bm{u} = \mathbf{0}$ is another feasible solution for this problem. Thus
\begin{equation*}
\overline{z} + \left(\sqrt{2\rho\hat{\mathbb{V}}_\gamma [\tilde{z}]} - 2\rho \right)_+ 
\leq 
\max_{\bm{w}\in\Delta^n} \left\{ \bm{w}^\top\bm{z} : \sum_{i=1}^n  \frac{1}{2\overline{w}_i} \left(w_i- \overline{w}_i \right)^2 \leq \rho  \right\} 
\leq \overline{z} + \sqrt{2\rho\hat{\mathbb{V}}_\gamma [\tilde{z}]} .
\end{equation*}
By letting $\lambda = \sqrt{2\rho}$, we complete this proof.
\end{proof}

\section{Proof of \Cref{prop:empirical_var_large} and its Corollary}\label{app:empirical_var_large}
\begin{proof}
We show that for any fixed tolerance level $\tau>0$, we have
\begin{equation}\label{eq:var_bd_union}
\sqrt{\VVrhohat[\ell(\bm x,\xit)]}
\geq \sqrt{\VVrho[\ell(\bm x,\xit)]} - \tau 
-\sqrt{ \frac{\log\left(\frac{\lvert \mc X_\eta \rvert}{\delta}\right)+\log(1+2/\tau)}{ n h_n^pg(\bm\gamma)(1+o(1))}}-2M\eta \qquad \forall\bm x \in \mc X
\end{equation}
with probability at least $1-\delta$. Here, $\lvert \mathcal X_\eta \rvert=O(1) (D/\eta)^d$. Then, the claim of this proposition should immediately follow as both \eqref{eq:DROequivlent_var_cond} and \eqref{eq:var_bd_union} together imply that $\hat{\mathbb{V}}_\gamma [\ell(\bm x,\xit) ] \geq \lambda^2$, which is sufficient to establish that the regularization scheme is equivalent to the DRO model (cf.~\Cref{prop:DRO}). 
From~\eqref{eq:bound_sample_stddev}, we have that for any fixed $\bm x \in \mc X$, with probability at least $1-\delta$, the following lower bound holds:
\begin{equation}\label{eq:var-lb}
\sqrt{\VVrhohat[\ell(\bm x,\xit)]}\geq \sqrt{\VVrho[\ell(\bm x,\xit)]} - \tau 
-\sqrt{ \frac{\log\left(\frac{1+2/\tau}{\delta}\right)}{ n h_n^pg(\bm\gamma)(1+o(1))}}.
\end{equation}

\noindent  Next, we define a finite solution set $\mathcal{X}_\eta \subset \mathcal{X}$ such that, for any $\bm{x}\in\mathcal{X}$, there exists some $\bm{x}'\in\mathcal{X}_\eta$ such that
$\Vert \bm{x} - \bm{x}' \Vert \leq \eta.$ From \cite{SN2005}, we know that $\vert \mathcal{X}_{\eta} \vert = O(1) (D/\eta)^d$.
Since the loss function $\ell(\bm x,\tilde{\bxi})$  is $M$-Lipschitz continuous in $\bm{x}$, we have that for any $\bm x \in \mathcal{X}$, there exists some $\bm{x}'\in \mathcal{X}_\eta$, such that $\lVert \bm x - \bm{x}'\rVert\leq \eta$ and the following condition holds:
\begin{equation}\label{eq:var-lipschitz}
\begin{aligned}
 \left\lvert\sqrt{\VVrhohat[\ell(\bm x,\xit)]}-\sqrt{\VVrhohat[\ell(\bm x',\xit)]}\right\rvert &\leq   M\eta.
\end{aligned}
\end{equation}
Next, consider fixed $\bm x \in \mc X$ and $\bm x' \in \mc X_\eta$ such that $\lVert\bm x-\bm x'\rVert\leq \eta $. Combining \eqref{eq:var-lb} and \eqref{eq:var-lipschitz}, we have that 
\begin{equation*}
\begin{aligned}
\sqrt{\VVrhohat[\ell(\bm x,\xit)]} &\geq \sqrt{\VVrhohat[\ell(\bm x',\xit)]} - M\eta \geq \sqrt{\VVrho[\ell(\bm x',\xit)]} - \tau 
-\sqrt{ \frac{\log\left(\frac{1}{\delta}\right)+\log(1+2/\tau)}{ n h_n^pg(\bm\gamma)(1+o(1))}}-M\eta
\end{aligned}
\end{equation*}
with probability at least $1-\delta$. Next, by applying union bound, we get that for all $\bm x' \in \mc X_\eta$,
\begin{equation*}
\sqrt{\VVrhohat[\ell(\bm x',\xit)]} \geq \sqrt{\VVrho[\ell(\bm x',\xit)]} - \tau
-\sqrt{ \frac{\log\left(\frac{\lvert \mc X_\eta \rvert}{\delta}\right)+\log(1+2/\tau)}{ n h_n^pg(\bm\gamma)(1+o(1))}}-2M\eta
\end{equation*}
with probability at least $1-\delta$. Thus, from above, we get that for all $\bm x \in \mc X$, the following bound holds with a probability of at least $1-\delta$:
\begin{equation*}
\begin{aligned}
\sqrt{\VVrhohat[\ell(\bm x,\xit)]}
&\geq \sqrt{\VVrho[\ell(\bm x',\xit)]} - \tau 
-\sqrt{ \frac{\log\left(\frac{\lvert \mc X_\eta \rvert}{\delta}\right)+\log(1+2/\tau)}{ n h_n^pg(\bm\gamma)(1+o(1))}}-M\eta\\
&\geq \sqrt{\VVrho[\ell(\bm x,\xit)]} - \tau 
-\sqrt{ \frac{\log\left(\frac{\lvert \mc X_\eta \rvert}{\delta}\right)+\log(1+2/\tau)}{ n h_n^pg(\bm\gamma)(1+o(1))}}-2M\eta.
\end{aligned}
\end{equation*}
Thus, we proved \eqref{eq:var_bd_union}.
\end{proof}
\begin{corollary}
\label{cor:DRO}
Fix a parameter $\omega>0$ such that $n^{1-\omega}h_n^p$ is increasing in $n$. Then, for all $\bm x \in \mc X$, we have
\begin{equation*}
\sqrt{\VVrhohat[\ell(\bm x,\xit)]} \geq \sqrt{\VVrho[\ell(\bm x,\xit)]}  -\sqrt{ \frac{2}{n^{\omega}g(\bm\gamma)(1+o(1))}} - \frac{2}{\exp\left( n^{1-\omega}h_n^p\right) - 1} - \frac{2MD}{\exp\left( \frac{n^{1-\omega}h_n^p}{2d}\right)}
\end{equation*}
with probability at least $1-C_{\mathcal X}\exp\left( - \frac{n^{1-\omega}h_n^p}{2}\right)$ for some constant $C_{\mathcal X}$. In particular, if the bandwidth $h_n = C_h/n^{1/(p+4)}$ is used with some constant $C_h$, then by setting $\omega = 2/(p+4)$, we obtain $n^{1-\omega}h_n^p = C_h^p n^{2/(p+4)}$, and so
\begin{equation*}
\sqrt{\VVrhohat[\ell(\bm x,\xit)]} \geq \sqrt{\VVrho[\ell(\bm x,\xit)]}  -\sqrt{ \frac{2}{n^{2/(p+4)}g(\bm\gamma)(1+o(1))}} - \frac{2}{\exp\left( C_h^p n^{2/(p+4)} \right) - 1} - \frac{2MD}{\exp\left( \frac{C_h^p n^{2/(p+4)}}{2d}\right)}
\end{equation*}
with probability at least $1-C_{\mathcal X}\exp\left( - \frac{C_h^p n^{2/(p+4)}}{2}\right)$ for some constant $C_{\mathcal X}$.
\end{corollary}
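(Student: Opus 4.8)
The plan is to specialize the high-probability lower bound on the empirical conditional standard deviation established inside the proof of \Cref{prop:empirical_var_large}, namely inequality~\eqref{eq:var_bd_union}, by making judicious choices of the free parameters $\tau$, $\eta$, and $\delta$. Recall that \eqref{eq:var_bd_union} guarantees, with probability at least $1-\delta$ and uniformly over $\bm x\in\mc X$, that
\[
\sqrt{\VVrhohat[\ell(\bm x,\xit)]}\geq \sqrt{\VVrho[\ell(\bm x,\xit)]}-\tau-\sqrt{\frac{\log(\lvert\mc X_\eta\rvert/\delta)+\log(1+2/\tau)}{nh_n^pg(\bm\gamma)(1+o(1))}}-2M\eta,
\]
where $\lvert\mc X_\eta\rvert=O(1)(D/\eta)^d$. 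I would match each of the three error terms in this bound to the corresponding term in the corollary.

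First I would set $\eta=D\exp(-n^{1-\omega}h_n^p/(2d))$, so that the Lipschitz term becomes $2M\eta=2MD/\exp(n^{1-\omega}h_n^p/(2d))$, the third term in the statement. This choice feeds back into the covering number as $\lvert\mc X_\eta\rvert=O(1)(D/\eta)^d=C'_{\mc X}\exp(n^{1-\omega}h_n^p/2)$ for some constant $C'_{\mc X}$. Next I would set $\tau=2/(\exp(n^{1-\omega}h_n^p)-1)$, which directly yields the second term; the point of this particular form is that $1+2/\tau=\exp(n^{1-\omega}h_n^p)$, so that $\log(1+2/\tau)=n^{1-\omega}h_n^p$ exactly.

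The remaining step is to choose $\delta$ so that the square-root term collapses to $\sqrt{2/(n^{\omega}g(\bm\gamma)(1+o(1)))}$. This requires the numerator inside the root to equal $2n^{1-\omega}h_n^p$, since dividing by $nh_n^p$ then produces $2/n^{\omega}$. Using $\log(1+2/\tau)=n^{1-\omega}h_n^p$ from the previous step, I would solve $\log(\lvert\mc X_\eta\rvert/\delta)=n^{1-\omega}h_n^p$, which with the computed $\lvert\mc X_\eta\rvert$ gives $\delta=C'_{\mc X}\exp(-n^{1-\omega}h_n^p/2)$. Setting $C_{\mc X}=C'_{\mc X}$, the success probability $1-\delta$ matches the stated $1-C_{\mc X}\exp(-n^{1-\omega}h_n^p/2)$, completing the proof of the first displayed bound. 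The ``in particular'' claim then follows by pure substitution: with $h_n=C_h/n^{1/(p+4)}$ and $\omega=2/(p+4)$ one computes $n^{1-\omega}h_n^p=C_h^p n^{2/(p+4)}$, which is increasing in $n$ (so the hypothesis of the corollary is met) and can be inserted into each term.

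Since every step is an explicit substitution into an already-proved inequality, there is no genuine analytical obstacle; the only care needed is bookkeeping. Specifically, one must verify that the two logarithmic contributions $\log(\lvert\mc X_\eta\rvert/\delta)$ and $\log(1+2/\tau)$ add up to \emph{exactly} $2n^{1-\omega}h_n^p$, so that the denominator $nh_n^p$ is converted into $n^{\omega}$, and one must confirm that the $O(1)$ prefactor in the covering number is absorbed into a single constant $C_{\mc X}$ that appears consistently in both the error bound (through $\eta$) and the probability bound (through $\delta$).
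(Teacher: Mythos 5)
Your proposal is correct and follows essentially the same route as the paper's own proof: both start from the uniform bound \eqref{eq:var_bd_union} in Proposition~\ref{prop:empirical_var_large} and make the identical choices $\eta = D\exp(-n^{1-\omega}h_n^p/(2d))$, $\tau = 2/(\exp(n^{1-\omega}h_n^p)-1)$, and $\delta = C_{\mathcal X}\exp(-n^{1-\omega}h_n^p/2)$, so that $\log(\lvert\mathcal X_\eta\rvert/\delta)+\log(1+2/\tau)=2n^{1-\omega}h_n^p$ collapses the square-root term to $\sqrt{2/(n^{\omega}g(\bm\gamma)(1+o(1)))}$. The only cosmetic difference is the order of derivation (the paper fixes $\delta$ first and obtains $\tau$ via the formula $2\delta/(C_{\mathcal X}(D/\eta)^d-\delta)$, whereas you pick $\tau$ directly and solve for $\delta$), which yields the same values.
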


\begin{proof}
From Proposition~\ref{prop:empirical_var_large}, for any $\tau>0$, we have
\begin{equation*}
\sqrt{\VVrhohat[\ell(\bm x,\xit)]} \geq \sqrt{\VVrho[\ell(\bm x,\xit)]} - \tau -\sqrt{ \frac{\log\left(\frac{\lvert \mc X_\eta \rvert}{\delta}\right)+\log(1+2/\tau)}{ n h_n^pg(\bm\gamma)(1+o(1))}}-2M\eta,
\end{equation*}
with probability at least $1-\delta$. Here, $\lvert \mathcal X_\eta \rvert=O(1) (D/\eta)^d$. Suppose we let
\begin{equation*}
\eta = \frac{D}{\exp\left( \frac{n^{1-\omega}h_n^p}{2d}\right)},
\end{equation*}
for some $\omega > 0$. Then for sufficiently large $n$ (that is, when $\eta$ is sufficiently small), we have $\lvert \mathcal X_\eta \rvert= C_{\mathcal X} (D/\eta)^d$, where $C_{\mathcal X}$ is some constant. We then let
\begin{equation*}
\delta = \frac{C_{\mathcal X}}{\exp\left( \frac{n^{1-\omega}h_n^p}{2}\right)}, \;\;\; \text{and} \;\;\; \tau = \frac{2\delta}{C_{\mathcal X} \left(\frac{D}{\eta}\right)^d - \delta} = \frac{2}{\exp\left( n^{1-\omega}h_n^p\right) - 1}.
\end{equation*}
With the above specified parameters, we have 
\begin{enumerate}
\item 
\begin{equation*}
\displaystyle - \tau - 2M\eta = - \frac{2}{\exp\left( n^{1-\omega}h_n^p\right) - 1} - \frac{2MD}{\exp\left( \frac{n^{1-\omega}h_n^p}{2d}\right)}  .
\end{equation*}
\item
\begin{equation*}
\log\left(\frac{\lvert \mc X_\eta \rvert}{\delta}\right) + \log(1+2/\tau) = 2\log\left(\frac{\lvert \mc X_\eta \rvert}{\delta}\right).
\end{equation*}
\item 
\begin{equation*}
\frac{\log\left(\frac{\lvert \mc X_\eta \rvert}{\delta}\right)}{nh_n^p} = \frac{\log\left(\frac{C_{\mathcal X} (D/\eta)^d}{\delta}\right)}{nh_n^p} = \frac{n^{1-\omega}h_n^p}{nh_n^p} = n^{-\omega}.
\end{equation*}
\end{enumerate}
By combining the above three results, we obtain the desired result.
\end{proof}

\section{Proof of \Cref{rem:socp}}
\begin{proof}
We first consider the inner maximization problem, which given a feasible solution~$\x\in\mc X$, yields the distribution with the worst-case expected loss as given below  
\begin{equation*}
\begin{aligned}
\max_{\mb P\in\mathcal P_\lambda(\hat{\mb P}_{\bm \gamma})}  \mb E_{\mb P} [\ell(\bm x,\xit)].
\end{aligned}
\end{equation*}
To simplify the notation, we first let $\eta=\frac{\lambda^2}{2}$. In addition, for a fixed $\bm x \in \mc X$, we define 
the vector $\bm z \in \mb R^n$ whose $i$-th component $z_i = \ell(\bm x,\bxi^i)$ denotes the loss function evaluated for the $i$-th data point. We then have the following formulation for the worst-case expected loss: 
\begin{equation}
\begin{aligned}
&\max &&\sum_{i=1}^n z_i w_i\\
&\;\;\text{s.t.} &&\bm e^\top \bm w = 1, \\
&&& \sum_{i=1}^n \frac{(w_i-\overline{w}_i)^2}{\overline{w}_i} \leq \eta, \\
&&& \bm w \in\mb R^n_+.
\end{aligned}
\end{equation}
We note that the above formulation can be equivalently expressed in terms of the second-order cone constraints below:
\begin{equation}
\begin{aligned}
&\max
&&\bm z^\top \bm w \\
&\;\;\text{s.t.} &&\bm e^\top \bm w = 1, \\
&&& \bigg[\frac{(w_1-\overline{w}_1)}{\sqrt{\overline{w}_1}},\ldots, \frac{(w_n-\overline{w}_n)}{\sqrt{\overline{w}_n}},\sqrt{\eta}\bigg]^\top \in  \SOC(n+1), \\
&&& \bm w \in\mb R^n_+. \\
\end{aligned}
\end{equation}
From the theory of conic programming duality, we know that the second-order cone is self-dual.  We then introduce the dual variables $\alpha\in\mb R$ and $(\bm\beta,\nu) \in \SOC(n+1)$ corresponding to the first two constraints, and write out the Lagrange function in terms of the dual variables, as follows: 
\begin{equation}
\begin{aligned}
\mc L(\bm w,\alpha, \bm \beta, \nu) &= \bm z^\top \bm w + \alpha ( 1 -\bm e^\top \bm w ) +\sum_{i=1}^n \frac{(w_i-\overline{w}_i)}{\sqrt{\overline{w}_i}} \beta_i + \nu \sqrt{\eta}  \\
&= \alpha  + \sum_{i=1}^n \bigg(z_i - \alpha +\frac{\beta_i}{\sqrt{\overline{w}_i}}\bigg) w_i -  \sum_{i=1}^n  \sqrt{\overline{w}_i} \beta_i + \sqrt{\eta} \nu .   
\end{aligned}
\end{equation}

\noindent Thus, we have that the associated Lagrangian dual function is given by
\begin{equation}
\begin{aligned}
g(\alpha, \bm \beta,\nu) &= \max_{\bm w \geq 0} \alpha + \sum_{i=1}^n \bigg(z_i - \alpha +\frac{\beta_i}{\sqrt{\overline{w}_i}}\bigg) w_i -  \sum_{i=1}^n  \sqrt{\overline{w}_i} \beta_i + \sqrt{\eta} \nu  \\
&= \alpha -  \sum_{i=1}^n  \sqrt{\overline{w}_i} \beta_i + \sqrt{\eta} \nu +  \sum_{i=1}^n \max_{w_i \geq 0} \bigg(z_i - \alpha +\frac{\beta_i}{\sqrt{\overline{w}_i}}\bigg) w_i. \\
\end{aligned}
\end{equation}
From above, we have 
\begin{equation}
g(\alpha, \bm \beta,\nu) =
\begin{cases}
\alpha -  \sum_{i=1}^n  \sqrt{\overline{w}_i} \beta_i + \sqrt{\eta} \nu &\text{if }  z_i, +\frac{\beta_i}{\sqrt{\overline{w}_i}} \leq \alpha \\
\infty &\text{otherwise}.
\end{cases}
\end{equation}

\noindent Therefore, the dual problem can be written as:
\begin{equation}
\begin{aligned}
&\min && \alpha -  \sum_{i=1}^n  \sqrt{\overline{w}_i} \beta_i + \sqrt{\eta} \nu   \\
&\;\text{s.t.} && \alpha \geq \ell(\bm x, \bm \xi^i) +\frac{\beta_i}{\sqrt{\overline{w}_i}} & \forall i\in [n],  \\
&&& \bm x \in \mc{\X},\;  \alpha\in\mb R,\;  (\bm\beta,\nu) \in \SOC(n+1).
\end{aligned}
\end{equation}
\end{proof}

\section{Details of \Cref{ex:portOpt_compare}}\label{app:portOptExample}
The proposed regularized NW approximation of the portfolio optimization problem is given by
\begin{equation*}
\max_{\bm x\in\mathcal X}\; \EErhohat[\xit^\top\bm x]-\lambda\sqrt{\VVrhohat\left[\xit^\top\bm x\right]}.
\end{equation*}
By applying \Cref{coro:SOCP}, the above problem can equivalently be reformulated as the  second-order cone program
\begin{equation*}
\begin{array}{cl}
\textup{max} &\displaystyle \left(\sum_{i=1}^n \overline{w}_{i}{(\bm\xi^i)}^\top\bm x\right)-\lambda\rho\\
\textup{s.t.} &\displaystyle \bm x\in\mathcal X, \;t\in\RR,\\
& \left(\sqrt{\overline{w}_{1}}({(\bm\xi^1)}^\top\bm x-t),\ldots,\sqrt{\overline{w}_{n}}({(\bm\xi^n)}^\top\bm x-t),\rho\right)\in\SOC(n+1),
\end{array}
\end{equation*}
where $\overline{w}_{i}=\frac{\mathcal K_{h}(\bm\gamma-\bm\gamma^i)}{\sum_{j=1}^n\mathcal K_{h}(\bm\gamma-\bm\gamma^j)}$, $i\in[n]$. In this example, we select $\lambda$ such that the model provides the best out-of-sample performance.

On the other hand, the LDR approach seeks for the best parameters $x_1,x_2,y \in \mathbb R$ so that the decision of investing $\$(x_i + \gamma \cdot y)$ in asset $i$, for $i=1,2$, and $\$(1-x_1-x_2 - 2\gamma \cdot y)$ in asset~$3$ generates the highest empirical return. The optimal portfolio allocation thus constitutes an  affine function in~$\gamma$. To find these parameters, we solve the following regularized empirical maximization problem:
\begin{equation}
\begin{array}{rll}
   \displaystyle \max
   &\displaystyle \frac{1}{n}\sum_{i=1}^n \left( \bm\xi_i^\top \left[ \bm x +  (\gamma_i \cdot y)\cdot \mathbf e \right]   \right) - \lambda (x_1^2 + x_2^2 + y^2)\\
   \st&  x_1,x_2,y\in\RR,\\
   &x_1 + \gamma_i \cdot y  \geq  0 , \; x_2 + \gamma_i \cdot y  \geq  0 , \; x_1 + x_2 + 2 \gamma_i \cdot y  \leq  1 \qquad \forall i \in [n].
\end{array}
\end{equation}
The constraints of this problem prohibit short selling and  ensure that the total allocation does not exceed $\$1$. In this example, similar to the NW approximation, we select $\lambda$ such that the this model provides the best out-of-sample performance.

\end{onehalfspace}

\end{document}